\documentclass[12pt,reqno]{amsart}

\usepackage{amsmath,amssymb,amsfonts,amsthm,mathrsfs,mathtools}
\usepackage[dvipsnames]{xcolor}
\usepackage{tikz}
\usepackage[margin=1in]{geometry}
\usepackage{enumitem}

\definecolor{lava}{rgb}{0.81,0.06,0.13}
\definecolor{Cblue}{rgb}{0.50,0.85,0.85}
\definecolor{lime}{HTML}{A6CE39}

\usepackage[colorlinks,linkcolor=lava,citecolor=blue,urlcolor=Cblue,hypertexnames=false]{hyperref}

\numberwithin{equation}{section}
\theoremstyle{plain}
\newtheorem{theorem}{Theorem}[section]
\newtheorem{proposition}[theorem]{Proposition}
\newtheorem{lemma}[theorem]{Lemma}

\theoremstyle{definition}
\newtheorem{definition}[theorem]{Definition}
\newtheorem{example}[theorem]{Example}

\theoremstyle{remark}
\newtheorem{remark}[theorem]{Remark}

\newcommand{\R}{\mathbb{R}}

\newcommand{\sphere}{\mathbb{S}}
\newcommand{\cone}{\mathcal{C}}
\newcommand{\eps}{\varepsilon}
\newcommand{\diag}{\mathrm{diag}}
\DeclareMathOperator{\RV}{RV}
\DeclareMathOperator{\tr}{tr}

\DeclareRobustCommand{\orcidicon}{%
  \begin{tikzpicture}
    \draw[lime, fill=lime] (0,0)
      circle [radius=0.16]
      node[white] {{\fontfamily{qag}\selectfont \tiny ID}};
    \draw[white, fill=white] (-0.0625,0.095) circle [radius=0.007];
  \end{tikzpicture}\hspace{-2mm}}
\foreach \x in {A, B}{%
  \expandafter\xdef\csname orcid\x\endcsname{%
    \noexpand\href{https://orcid.org/\csname orcidauthor\x\endcsname}{\noexpand\orcidicon}}}

\title[Fujita-type blow-up via regular variation]%
{Fujita-type blow-up for inhomogeneous semilinear heat equations with regularly varying forcing}

\author[V.~Kumar and M.~Majdoub]{Vishvesh Kumar\orcidA{} and Mohamed Majdoub\orcidB{}}

\address[V.~Kumar]{
Department of Mathematical Sciences, Indian Institute of Technology (BHU),
Varanasi, Uttar Pradesh, 221005, India.}
\email{\tt vishvesh.mat@iitbhu.ac.in}
\email{\tt vishveshmishra@gmail.com}

\address[M.~Majdoub]{Department of Mathematics, College of Science, Imam Abdulrahman Bin Faisal University, P.O.~Box 1982, Dammam, Saudi Arabia.\newline
Basic and Applied Scientific Research Center, Imam Abdulrahman Bin Faisal University, P.O.~Box 1982, 31441, Dammam, Saudi Arabia.}
\email{\tt mmajdoub@iau.edu.sa}
\email{\tt  med.majdoub@gmail.com}
\email{\tt mohamed.majdoub@fst.rnu.tn}
\subjclass[2020]{Primary 35B44, 35B33; Secondary 35K15, 35R11, 26A12}

\keywords{Inhomogeneous semilinear heat equation; Fujita exponent; finite-time blow-up; test function method; regular variation; Karamata theory; operator regular variation; fractional Laplacian}

\begin{document}

\begin{abstract}

We develop a unified framework for Fujita-type blow-up of solutions to the inhomogeneous semilinear heat equation
$$\partial_tu-\Delta u=|u|^p+\mathbf{w}(x),
\qquad (t,x)\in(0,\infty)\times\mathbb{R}^N, \qquad u(0, \cdot)=u_0.$$
The classical integrability assumptions on the forcing term are replaced by quantitative regular variation properties of its spatial mass
$$F(R)=\int\limits_{|x|\le R}\mathbf{w}(x)\,dx.$$
Using techniques from regular variation theory together with the Mitidieri--Pohozaev test-function method, we establish sharp Fujita-type nonexistence results and identify the critical exponent in terms of the variation index of $F$. We prove that global solutions do not exist in the subcritical range and obtain critical-case blow-up under suitable slowly varying corrections.

The regular variation framework further shows the optimality of the underlying mass condition, extends naturally to anisotropic settings through operator regular variation, and yields sufficient blow-up criteria for sign-changing forcings via the Gaussian-Laplace transform. The approach also applies to space-time dependent forcings, Riesz-potential type forcings, and equations involving the fractional Laplacian, providing a unified description of blow-up thresholds beyond the classical Fujita theory.

\end{abstract}\maketitle

%\tableofcontents

%==================================================================
\section{Introduction}
\label{sec:intro}

 We study nonexistence of global nonnegative solutions to the inhomogeneous
semilinear heat equation
\begin{equation}\label{main}
\begin{cases}
\partial_t u-\Delta u=|u|^p+\mathbf{w}(x), & (t,x)\in(0,\infty)\times\R^N,\\
u(0,x)=u_0(x), & x\in\R^N,
\end{cases}
\end{equation}
where \(N\ge 1\), \(p>1\), and \(\mathbf{w}\in L^1_{\mathrm{loc}}(\R^N)\).
We also treat the space--time case \(\mathbf{w}=\mathbf{w}(x,t)\), as well as nonlocal
variants. Solutions are understood in the weak (distributional) sense made
precise in Definition~\ref{def:weak} below, and we focus throughout on
nonnegative solutions.   We do not address local well-posedness under the minimal assumptions on the forcing term used here. As is customary in this context, ``blow-up'' refers
to the nonexistence of global-in-time weak solutions. Under stronger assumptions
on \(u_0\) and \(w\), standard semigroup arguments yield local mild
solutions, so our nonexistence results may then be interpreted as genuine
finite-time blow-up statements.

The unforced problem (\(w\equiv0\)) was settled by Fujita in his seminal
paper \cite{Fujita1966}, complemented in the critical case by Hayakawa
\cite{Hayakawa1973} (see also \cite{Qi1998}, the surveys
\cite{Levine1990,DengLevine2000}, and the monograph
\cite{QuittnerSouplet2007}): every nontrivial nonnegative solution blows up
in finite time if \(1<p\le 1+\tfrac{2}{N}\), while small data produce
global solutions if \(p>1+\tfrac{2}{N}\). For the forced problem,
Bandle, Levine, and Zhang~\cite{BandleLevineZhang2000} showed that if
\(\mathbf{w}\in L^1(\R^N)\) (possibly sign-changing) with
\(\int_{\R^N}\mathbf{w}(x)\,dx>0\), then no global nonnegative solution of
\eqref{main} exists provided
\[
1<p<\frac{N}{N-2},\qquad N\ge3;
\]
see also \cite{Zhang1998} for the critical case. Their argument relies
essentially on the integrability of \(w\) and does not apply to forcings
that are non-integrable, anisotropic, or growing at infinity.

Motivated by the recent work of Kumar and Torebek \cite{KumarTorebek} on
mixed local--nonlocal diffusion, and by related studies of forced parabolic
problems \cite{MedJM,DCDS,arXiv,Majdoub2021,Majdoub2023}, we replace the
integrability hypothesis by a \emph{quantitative} lower bound on the
spatial mass of \(w\):
\begin{equation}\label{eq:massbound}
F(R):=\int_{|x|\le R}\mathbf{w}(x)\,dx,\qquad
\liminf_{R\to\infty}\frac{F(R)}{R^\gamma}>0,
\qquad \gamma\in[0,N).
\end{equation}
Under this hypothesis we show (Theorem~\ref{thm:RVblowup}\,(i) below) that
\eqref{main} admits no global nonnegative solution for
\(1<p<p_F(\gamma)\), where
\begin{equation}\label{eq:pFgamma}
p_F(\gamma):=
\begin{cases}
\dfrac{N-\gamma}{N-\gamma-2}, & \gamma<N-2,\\[6pt]
\infty, & \gamma\ge N-2.
\end{cases}
\end{equation}
The classical case \(\gamma=0\) recovers the framework of
\cite{BandleLevineZhang2000}, whereas the case \(\gamma>0\) allows \(w\) to
have infinite mass.

The hypothesis \eqref{eq:massbound} is one-sided and qualitative. Once
\(F\) is positive and asymptotically of order \(R^\gamma\), it is natural
to ask whether \(F\) is \emph{regularly varying} of index \(\gamma\) in the
sense of Karamata \cite{Karamata}, so that \(F(R)=R^\gamma L(R)\) with
\(L\) slowly varying. The one-dimensional and multivariate theories of
regular variation, developed by Karamata, Bingham--Goldie--Teugels,
de~Haan, Resnick, Sharpe, Meerschaert, and Scheffler
\cite{BGT,Seneta,deHaanFerreira,Resnick1987,Resnick2002,Resnick2007,Sharpe1969,Meerschaert1988,MS},
provide precisely the machinery needed to:
\begin{itemize}[leftmargin=*]
\item replace \eqref{eq:massbound} by the regularly varying mass condition
\(F\in\RV_\gamma\), which yields blow-up throughout the subcritical range
without any lower bound on the slowly varying factor, and also at the
critical exponent \(p=p_F(\gamma)\) under slow amplification;
\item derive a Tauberian sufficient condition for sign-changing forcings in
terms of a Gaussian Laplace transform of \(w\);
\item formulate an anisotropic Fujita-type criterion associated with an
exponent matrix \(E\), encompassing forcings with prescribed scaling rates
in different directions.
\end{itemize}

The main results of the paper are as follows.

\begin{enumerate}[label=\textnormal{(\arabic*)},leftmargin=*]
\item \textbf{Master blow-up theorem (Theorem~\ref{thm:RVblowup}).}
If either the \(\liminf\) hypothesis \eqref{eq:massbound} holds or
\(F\in\RV_\gamma\), then \eqref{main} admits no global nonnegative solution
for \(1<p<p_F(\gamma)\). If, in addition, \(F\in\RV_\gamma\) with slowly
varying factor \(L(R)\to\infty\), then nonexistence also holds at the
critical exponent \(p=p_F(\gamma)\).

\item \textbf{Sharpness of \(\liminf\) versus \(\limsup\)
(Proposition~\ref{prop:limsupweak}).}
For every \(\gamma\in(0,N)\), there exists a nonnegative forcing \(w\) for
which \(R^{-\gamma}F(R)\) has positive \(\limsup\) but vanishing
\(\liminf\). The construction uses annuli separated on super-exponential
scales.

\item \textbf{Operator-RV blow-up theorem (Theorem~\ref{thm:operator}).}
For forcings whose operator mass \(F^E\) (computed on anisotropic boxes
associated with a symmetric positive definite exponent matrix \(E\)) is
regularly varying of index \(|E|+\rho\), with \(q:=-\rho\in(0,|E|)\),
blow-up holds for
\[
1<p<p_F^E:=\frac{q}{q-2\lambda_{\min}}
\quad\text{if } q>2\lambda_{\min},
\qquad\text{and for every } p>1 \text{ if } q\le 2\lambda_{\min}.
\]
Here \(|E|=\tr(E)\) and \(\lambda_{\min}\) is the smallest eigenvalue of
\(E\). For \(E=I\), the threshold coincides with the isotropic exponent
\eqref{eq:pFgamma}. In general, the anisotropic and isotropic criteria are
not comparable, and we exhibit a four-dimensional example in which the
anisotropic theorem is strictly stronger than every isotropic statement.

\item \textbf{Tauberian criterion for sign-changing forcings
(Theorem~\ref{thm:Tauberblow}).}
If \(\mathbf{w}\) changes sign, its negative part has mass of strictly smaller
order, and its Gaussian Laplace transform \(\Lambda_{\mathbf{w}}(s)\) is regularly
varying of index \(-\gamma/2\) as \(s\to0^+\), then \(F\in\RV_\gamma\) and
the master theorem applies.

\item \textbf{Cumulative-forcing functional (Theorem~\ref{thm:Phidich}).}
The functional
\[
\Phi(T):=\int_0^T\!\!\int_{|x|\le R(T)} \mathbf{w}(x,t)\,dx\,dt
\]
is regularly varying, with an index \(\alpha\) that, together with the
test-function exponent \(\beta(p)\), determines the blow-up dichotomy:
\(\alpha>\beta(p)\) forces nonexistence, \(\alpha=\beta(p)\) together with
\(L\to\infty\) still forces nonexistence, while \(\alpha<\beta(p)\) leaves
the test-function method silent.

\item \textbf{Anisotropic, Riesz, and fractional extensions.}
The same scheme also applies to forcings depending only on a strict subset
of the variables (Theorem~\ref{thm:anisotropic}), to Riesz-potential
forcings (Theorem~\ref{thm:riesz}), and to fractional diffusion
(Theorem~\ref{thm:frac}).
\end{enumerate}

The preceding list describes the main contributions of the paper in
self-contained form. To place them in context, we now compare our results with the classical Fujita theory, the forced $L^1$-framework of
Bandle--Levine--Zhang \cite{BandleLevineZhang2000} and
nonlocal problems. This comparison highlights both the continuity of the
present approach with the existing literature and the new phenomena that
arise once one replaces integrability by quantitative mass growth and
regular variation.

The classical forced Fujita theory in  \(L^1\)-framework of Bandle--Levine--Zhang
\cite{BandleLevineZhang2000,Zhang1998} appear in our setting as the
endpoint \(\gamma=0\). Indeed, if \(\mathbf{w}\in L^1(\R^N)\) with
\(\int_{\R^N}\mathbf{w}\,dx>0\), then \(F(R)\to \int_{\R^N}\mathbf{w}\,dx\), so
\(F\in\RV_0\), and Corollary~\ref{cor:sign} recovers the nonexistence range
\[
1<p<\frac{N}{N-2}
\qquad (N\ge3),
\]
exactly as in \cite{BandleLevineZhang2000}.

The present framework goes substantially beyond the integrable case. In
particular, it applies to forcings with infinite mass ( Examples~\ref{ex:power}--\ref{ex:decayL}) and yields the
continuous family of the strictly larger Fujita range $1<p<p_F(\gamma)$, and at the
critical exponent under slow amplification.

The novelties relative to the existing literature are: (a) the continuous
family $p_F(\gamma)$ and the closed (critical-inclusive) range under slowly
varying amplification (Theorem~\ref{thm:RVblowup}\,(ii),
Theorem~\ref{thm:classification}\,(3)); (b) the sharp separation between
$\liminf$ and $\limsup$ hypotheses (Proposition~\ref{prop:limsupweak});
(c) the operator-anisotropic Fujita exponents and the example of strict
improvement (Theorem~\ref{thm:operator}, Example~\ref{ex:N4}); (d) the
Tauberian criterion for sign-changing forcings
(Theorem~\ref{thm:Tauberblow}); and (e) the unifying $\Phi$-formalism
(Theorem~\ref{thm:Phidich}).

The technical engine throughout is the test function method of Mitidieri
and Pohozaev~\cite{MitidieriPohozaev2001}; the regular variation hypotheses
control the lower bounds on the forcing, while Karamata-type growth bounds
for slowly varying functions sharpen the resulting contradictions. In the
fractional case the nonlocal test-function computations are carried out
rigorously by means of the C\'ordoba--C\'ordoba convexity inequality
\cite{CordobaCordoba,Ju2005}, following the framework of
\cite{FinoKirane}.

The remainder of the article is organized as follows.
Section~\ref{sec:RV} collects the prerequisites on regular variation,
including the operator framework. Section~\ref{sec:main} states and proves
the master blow-up theorem and discusses examples. The sharpness result is
in Section~\ref{sec:sharp}. Section~\ref{sec:class} classifies the
asymptotic regimes of $F(R)$ and treats angular structure.
Section~\ref{sec:ext} contains the space--time, pointwise and
sign-changing extensions. The operator-anisotropic theory is in
Section~\ref{sec:operator}. Nonlocal and fractional extensions, together
with the $\Phi$-formalism, occupy Section~\ref{sec:nonlocal}.
Section~\ref{sec:open} closes with open problems.

%==================================================================
\section{Regular variation: prerequisites}\label{sec:RV}

We summarise the elements of the ragular variation theory used in the sequel and refer to
\cite{BGT,Seneta,MS} and the references therein for detailed expositions.

\subsection{Classical regular variation}

A measurable function $V:[A,\infty)\to(0,\infty)$, $A>0$, belongs to
$\RV_\rho$, the class of \emph{regularly varying functions of index
$\rho\in\R$}, if
\[
\displaystyle\lim_{t\to\infty}\frac{V(tx)}{V(t)}=x^\rho\qquad \text{for every }x>0.
\]
The case $\rho=0$ defines the \emph{slowly varying} class.

\begin{theorem}[{Karamata representation, \cite[Th.~1.3.1]{BGT}}]\label{thm:karamata} A measurable function $V:(0,\infty)\to(0,\infty)$ belongs to
$\RV_\rho$ if and only if it can be written as
\[
V(t)=t^\rho L(t),\qquad
L(t)=c(t)\exp\!\Bigl(\int_a^t \tfrac{\eps(s)}{s}\,ds\Bigr),
\]
with $a>0$, $c(t)\to c_0\in(0,\infty)$ and $\eps(t)\to 0$ as $t\to\infty$.
In particular, $L\in\RV_0$, that is, $L$ is slowly varying.
\end{theorem}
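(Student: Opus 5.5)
The plan is to reduce to the slowly varying case and then prove both directions of the characterization for $L$. Put $L(t):=t^{-\rho}V(t)$. Then $V\in\RV_\rho$ if and only if $L\in\RV_0$, since $L(tx)/L(t)=x^{-\rho}V(tx)/V(t)$. The whole statement is therefore equivalent to the assertion that a measurable positive $L$ is slowly varying if and only if it has the form $c(t)\exp\bigl(\int_a^t\eps(s)s^{-1}\,ds\bigr)$ with $c(t)\to c_0\in(0,\infty)$ and $\eps(t)\to0$. The final sentence of the theorem ("in particular $L\in\RV_0$") is then just the "if" direction applied to $L$.

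For the "if" direction, fix $x>0$ and write
\[
\frac{L(tx)}{L(t)}=\frac{c(tx)}{c(t)}\exp\Bigl(\int_t^{tx}\frac{\eps(s)}{s}\,ds\Bigr).
\]
The prefactor tends to $c_0/c_0=1$. The exponent is bounded in absolute value by $\sup_{s\ge \min(t,tx)}|\eps(s)|\cdot|\log x|$, which tends to $0$. Hence the ratio tends to $1$.

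The "only if" direction is the substantive part, and I would carry it out in three steps.
\begin{enumerate}
\item \emph{Uniform convergence theorem.} For measurable $L\in\RV_0$, I will show that $L(tx)/L(t)\to1$ uniformly for $x$ in compact subsets of $(0,\infty)$. Passing to $h(u):=\log L(e^u)$, this says that $h(u+v)-h(u)\to0$ uniformly for $v$ in compacts. I argue by contradiction: suppose there exist $u_n\to\infty$ and $v_n\in[0,1]$ with $|h(u_n+v_n)-h(u_n)|\ge\delta$. The sets
\[
A_n=\bigl\{y\in[0,2]:|h(u_n+y)-h(u_n)|<\delta/2 \text{ for all later indices}\bigr\}
\]
are increasing, and by pointwise convergence together with measurability their union is $[0,2]$. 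So some $A_n$ has measure greater than $3/2$, and the same holds for the corresponding sets centred at $u_n+v_n$. The two sets overlap, and the triangle inequality then gives a contradiction. This measure-theoretic step, essentially the Egorov/Steinhaus argument, is the main obstacle, and it is the only place where measurability is used.
\item \emph{Local boundedness.} Uniform convergence shows that $h$ is bounded on $[U,\infty)\cap$ every compact shifted interval, hence locally integrable for large $u$.
\item \emph{Representation.} Define $\eps(u):=h(u)-\int_u^{u+1}h(s)\,ds$ type quantities. More precisely, set
\[
k(u):=\int_u^{u+1}h(s)\,ds .
\]
Then:
\begin{itemize}
\item $h(u)-k(u)=\int_0^1\bigl(h(u)-h(u+y)\bigr)\,dy\to0$ by uniform convergence. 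This gives $c(t):=\exp\bigl(h(\log t)-k(\log t)\bigr)\to1$.
\item $k$ is absolutely continuous, with $k'(u)=h(u+1)-h(u)=:\eps^*(u)\to0$.
\end{itemize}
Writing $k(u)=k(U)+\int_U^u\eps^*(s)\,ds$ and returning to the variable $t=e^u$, with $\eps(s):=\eps^*(\log s)$, yields
\[
L(t)=c(t)\,e^{k(U)}\exp\Bigl(\int_{a}^{t}\frac{\eps(s)}{s}\,ds\Bigr),\qquad a=e^U .
\]
The constant $e^{k(U)}$ is absorbed into $c$, so $c_0=e^{k(U)}\in(0,\infty)$. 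On $(0,a)$, where the representation is not needed, one may simply set $c:=L$ times the appropriate factor.
\end{enumerate}

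Multiplying by $t^\rho$ then gives the stated form of $V$.
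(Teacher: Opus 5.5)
Your proposal is correct and is essentially the standard proof of \cite[Th.~1.3.1]{BGT}, which the paper cites without proving: you reduce to $L=t^{-\rho}V$, prove the uniform convergence theorem by the measure-overlap argument, obtain local boundedness of $h=\log L(e^{\cdot})$ far out, and use the decomposition $h(u)=\bigl(h(u)-k(u)\bigr)+k(U)+\int_U^u\bigl(h(s+1)-h(s)\bigr)\,ds$ with $k(u)=\int_u^{u+1}h$. Two details need tidying: the overlap step should use a common index $n$ with $A_n$ and $v_n+B_n$ both contained in $[0,3]$, and on $(0,a)$ you should set $\eps:=0$ and $c:=L$ so that the integral is defined there.
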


\begin{theorem}[{Uniform convergence and Potter bounds, \cite[Th.~1.2.1 and Th.~1.5.6]{BGT}}]\label{thm:potter}
Let $V\in\RV_\rho$. The convergence $V(tx)/V(t)\to x^\rho$ is uniform on
every compact subset of $(0,\infty)$. Moreover, for every $\delta>0$ there
exists $t_0>0$ such that, for all $t,tx\ge t_0$,
\[
(1-\delta)\min(x^{\rho-\delta},x^{\rho+\delta})
\le\frac{V(tx)}{V(t)}\le(1+\delta)\max(x^{\rho-\delta},x^{\rho+\delta}).
\]
\end{theorem}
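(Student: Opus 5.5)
The plan is to deduce both assertions of Theorem~\ref{thm:potter} from the Karamata representation (Theorem~\ref{thm:karamata}). Write $V(t)=t^\rho L(t)$ with $L(t)=c(t)\exp\bigl(\int_a^t \eps(s)\,ds/s\bigr)$. Then
\[
\frac{V(tx)}{V(t)}=x^\rho\,\frac{c(tx)}{c(t)}\exp\Bigl(\int_t^{tx}\frac{\eps(s)}{s}\,ds\Bigr).
\]
Hence the whole problem reduces to controlling two quantities: the ratio $c(tx)/c(t)$ and the exponential factor.

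\textbf{Uniform convergence.} Fix a compact $K=[b^{-1},b]\subset(0,\infty)$. Since $c(t)\to c_0>0$, we have $c(tx)/c(t)\to1$ uniformly for $x\in K$, because $tx\ge t/b\to\infty$. For the exponential factor, set $\eta(t)=\sup_{s\ge t/b}|\eps(s)|$, which tends to $0$. Then
\[
\Bigl|\int_t^{tx}\frac{\eps(s)}{s}\,ds\Bigr|\le \eta(t)\,|\log x|\le\eta(t)\log b,
\]
and this goes to $0$ uniformly on $K$. Combining the two estimates gives uniform convergence to $x^\rho$. Continuity of $x\mapsto x^\rho$ on $K$ is not even needed.

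\textbf{Potter bounds.} Given $\delta>0$, we choose $t_0$ large enough that two conditions hold. First, $|\eps(s)|\le\delta$ for $s\ge t_0$, which gives
\[
\Bigl|\int_t^{tx}\frac{\eps(s)}{s}\,ds\Bigr|\le\delta|\log x| \quad\text{whenever } t,tx\ge t_0,
\]
since the integration interval lies in $[t_0,\infty)$. This yields the factor $\exp(\pm\delta|\log x|)$, so that $x^\rho\exp(\delta|\log x|)=\max(x^{\rho-\delta},x^{\rho+\delta})$, and symmetrically for the minimum. Second, we need $c(tx)/c(t)\in[1-\delta,1+\delta]$. Since $c(t)\to c_0$, pick $t_0$ so that $|c(s)/c_0-1|\le\delta'$ for $s\ge t_0$. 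Then
\[
\frac{c(tx)}{c(t)}\in\Bigl[\frac{1-\delta'}{1+\delta'},\,\frac{1+\delta'}{1-\delta'}\Bigr],
\]
and choosing $\delta'$ small relative to $\delta$ puts this interval inside $[1-\delta,1+\delta]$. Multiplying the bounds gives the stated two-sided inequality.

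\textbf{Main obstacle.} The only delicate point is that the Karamata representation is stated here for $V$ defined on $(0,\infty)$, whereas $\RV_\rho$ was defined on $[A,\infty)$. We handle this by applying the representation to any extension of $V$ (for example, a constant on $(0,A)$); this extension does not affect the behaviour for large $t$. Some care is also needed with the measurability and local boundedness of $c$ and $\eps$. These are only used for $s\ge t_0$, where $\eps$ is bounded by $\delta$ and $c$ is bounded away from $0$ and $\infty$, so no uniform convergence theorem beyond the representation itself is required.
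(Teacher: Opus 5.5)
You have no paper proof to match here: the paper only quotes \cite[Th.~1.2.1 and Th.~1.5.6]{BGT}, so the relevant comparison is with the standard proofs there.

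\textbf{Potter bounds.} Your argument is correct and is essentially the textbook proof of \cite[Th.~1.5.6]{BGT}. You take $t_0\ge a$ with $|\eps(s)|\le\delta$ and $|c(s)/c_0-1|\le\delta'$ for $s\ge t_0$. You then bound the two factors of $V(tx)/V(t)=x^\rho\,\frac{c(tx)}{c(t)}\exp\bigl(\int_t^{tx}\eps(s)\,\frac{ds}{s}\bigr)$ separately. Any $\delta'\le\delta/(2+\delta)$ works, and $x^\rho e^{\pm\delta|\log x|}$ are exactly the stated max and min.

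\textbf{Uniform convergence.} Here you run the logic in the opposite direction to Bingham--Goldie--Teugels. There the uniform convergence theorem is proved first, directly from measurability and pointwise convergence, by an Egorov/Steinhaus-type measure argument. The representation theorem is then \emph{derived} from it. Your deduction is legitimate here only because the paper states Theorem~\ref{thm:karamata} beforehand as a black box. It is not an independent proof of uniform convergence: the one genuinely hard step is showing that pointwise convergence of $V(tx)/V(t)$ for a merely measurable $V$ self-improves to local uniformity. That step is exactly what the proof of the representation theorem consumes. Your route buys a short, unified derivation of both assertions from one representation. The direct route buys a non-circular foundation.

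\textbf{Two small points.} First, your separate uniform-convergence paragraph is redundant. Your Potter bounds already give, for $x\in[b^{-1},b]$ and $t\ge bt_0$, the estimate $(1-\delta)b^{-\delta}\le V(tx)/(x^\rho V(t))\le(1+\delta)b^{\delta}$, and letting $\delta\to0$ yields uniformity on $K$. Second, passing from $V(tx)/(x^\rho V(t))\to1$ to $V(tx)/V(t)\to x^\rho$ uniformly on $K$ does use $\sup_K x^\rho<\infty$. This is trivial, but it is precisely what is needed, contrary to your remark that nothing about $x^\rho$ is used.
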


The following elementary growth estimate for slowly varying functions will be used repeatedly throughout the paper. Its proof follows directly from the Karamata representation theorem; see also \cite[Prop.~1.3.6]{BGT}.

\begin{lemma}[Growth bounds for slowly varying functions]\label{lem:SVbounds}
Let $L\in\RV_0$. Then for every $\delta>0$ there exists $R_\delta>0$ such
that
\[
R^{-\delta}\le L(R)\le R^{\delta}\qquad\text{for all }R\ge R_\delta.
\]
\end{lemma}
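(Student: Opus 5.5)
We derive the bounds from the Karamata representation (Theorem~\ref{thm:karamata}) with $\rho=0$. Write
\[
L(R)=c(R)\exp\Bigl(\int_a^R \frac{\eps(s)}{s}\,ds\Bigr),
\]
where $c(R)\to c_0\in(0,\infty)$ and $\eps(s)\to0$. Fix $\delta>0$. Taking logarithms, it suffices to show that
\[
\bigl|\log L(R)\bigr|\le \delta\log R
\]
for all large $R$. The triangle inequality gives
\[
\bigl|\log L(R)\bigr|\le \bigl|\log c(R)\bigr|+\int_a^R\frac{|\eps(s)|}{s}\,ds ,
\]
and we treat the two terms separately.

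\emph{First term.} Since $c(R)\to c_0>0$, the quantity $|\log c(R)|$ is bounded by some constant $M$ for $R\ge R_1$. In particular $|\log c(R)|\le \tfrac{\delta}{3}\log R$ once $R\ge \max(R_1,e^{3M/\delta})$.

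\emph{Second term.} Choose $s_0\ge a$ such that $|\eps(s)|\le \delta/3$ for $s\ge s_0$, and split the integral at $s_0$. The piece over $[a,s_0]$ is a finite constant $K$. Local integrability of $\eps(s)/s$ on compact subintervals of $[a,\infty)$ is part of the representation theorem; $\eps$ may be taken bounded and measurable. The piece over $[s_0,R]$ is at most $\tfrac{\delta}{3}\log(R/s_0)\le\tfrac{\delta}{3}\log R$. Finally, $K\le\tfrac{\delta}{3}\log R$ once $R\ge e^{3K/\delta}$.

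Summing the three contributions, we obtain $|\log L(R)|\le\delta\log R$ for $R\ge R_\delta:=\max(R_1,s_0,e^{3M/\delta},e^{3K/\delta},1)$. Exponentiating yields $R^{-\delta}\le L(R)\le R^{\delta}$.

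No step is a real obstacle. The only point needing care is that the theorem is stated for $L$ defined on $[A,\infty)$, while the representation involves an arbitrary base point $a$. We handle this by enlarging $R_\delta$ so that every bound is taken for $R\ge\max(a,A)$, where the representation is valid. An alternative route would be to use $\log L(R)/\log R\to0$, as in \cite[Prop.~1.3.6]{BGT}, but the direct estimate above is self-contained given Theorem~\ref{thm:karamata}.
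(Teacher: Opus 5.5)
Your proof is correct and follows the same route as the paper, which simply says the bounds follow directly from the Karamata representation theorem and points to Bingham--Goldie--Teugels, Prop.~1.3.6. One small fix: choose $s_0\ge\max(a,1)$ rather than only $s_0\ge a$, since the step $\log(R/s_0)\le\log R$ needs $s_0\ge 1$.
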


\begin{theorem}[{Karamata integration, \cite[Prop.~1.5.8]{BGT}}]\label{thm:karamataint}
Let $V\in\RV_\rho$ with $\rho>-1$ be locally integrable on $[0,\infty)$.
Then
\[
\int_0^R V(r)\,dr\sim \frac{R\,V(R)}{\rho+1}\qquad(R\to\infty).
\]
Equivalently, if $V(r)=r^\rho L(r)$, then
$\int_0^R V\sim R^{\rho+1}L(R)/(\rho+1)$.
\end{theorem}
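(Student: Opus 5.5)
The plan is to prove the Karamata integration theorem directly. We split the integral at a large threshold and use the uniform convergence and Potter bounds of Theorem~\ref{thm:potter} on the main piece.

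First we reduce to a scaled integral. Write
\[
\int_0^R V(r)\,dr=\int_0^{A}V(r)\,dr+R\int_{A/R}^{1}V(Rs)\,ds
\]
for a fixed large $A$. After dividing by $RV(R)$, the first term is a finite constant divided by $RV(R)$. Since $RV(R)\in\RV_{\rho+1}$ with $\rho+1>0$, Lemma~\ref{lem:SVbounds} gives $RV(R)\ge R^{\rho+1-\delta}\to\infty$. Hence this term tends to $0$; the ratio is taken over the positive function $V(R)$ for large $R$.

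Second, we show
\[
\int_{A/R}^1\frac{V(Rs)}{V(R)}\,ds\to\int_0^1 s^\rho\,ds=\frac1{\rho+1}.
\]
Fix $\eta\in(0,1)$ and split the range at $s=\eta$.

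On $[\eta,1]$ the integrand converges uniformly to $s^\rho$ by Theorem~\ref{thm:potter}. This piece therefore tends to $\int_\eta^1 s^\rho\,ds$.

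On $[A/R,\eta]$ we apply Potter's bound with $t=R$, $x=s\le1$, and $\delta<\rho+1$. Taking $A\ge t_0$ ensures $Rs\ge t_0$. This gives
\[
\frac{V(Rs)}{V(R)}\le(1+\delta)\,s^{\rho-\delta},
\]
and $s^{\rho-\delta}$ is integrable near $0$ because $\rho-\delta>-1$. So this piece is bounded by $(1+\delta)\eta^{\rho-\delta+1}/(\rho-\delta+1)$, uniformly in $R$.

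Letting $R\to\infty$ and then $\eta\to0$ yields the limit $1/(\rho+1)$. This is the dominated-convergence step, and it is where the hypothesis $\rho>-1$ is essential.

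The main obstacle is justifying the tail near $s=0$ uniformly in $R$. Potter's bound handles it only for $Rs\ge t_0$, which is why the region $r\le A$ is split off and absorbed using $RV(R)\to\infty$. The equivalent form with $V(r)=r^\rho L(r)$ then follows at once by substituting $RV(R)=R^{\rho+1}L(R)$.
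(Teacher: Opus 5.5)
Your proof is correct, and it is essentially the standard proof of the cited result (Bingham--Goldie--Teugels, Prop.~1.5.8). That proof also substitutes $r=Rs$, uses uniform convergence on $[\eta,1]$ and the Potter majorant $(1+\delta)s^{\rho-\delta}$ on $[A/R,\eta]$ (with $\delta<\rho+1$ fixed first and then $A\ge t_0(\delta)$), and discards $\int_0^A V$ because $RV(R)\to\infty$. The paper gives no proof of its own and simply cites this result, so your route is the one it relies on.
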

The next result is the Karamata Tauberian theorem \cite[Th.~1.7.1]{BGT}.
\begin{theorem}\label{thm:tauber}
Let $U:[0,\infty)\to[0,\infty)$ be non-decreasing with Laplace--Stieltjes
transform $\widehat U(s)=\int_0^\infty e^{-st}\,dU(t)$ finite for every
$s>0$. For $\rho\ge 0$ and $L\in\RV_0$,
\[
\widehat U(s)\sim s^{-\rho}L(1/s)\quad(s\to 0^+)
\ \Longleftrightarrow\
U(t)\sim \frac{t^\rho L(t)}{\Gamma(\rho+1)}\quad(t\to\infty).
\]
\end{theorem}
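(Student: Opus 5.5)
The statement is the classical Karamata Tauberian theorem. I would prove the two implications separately. Both rest on one normalization: writing $V(t):=t^\rho L(t)\in\RV_\rho$, we compare the measures $dU(x/s)$ on $[0,\infty)$ with $V(1/s)$ as $s\to0^+$.

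\emph{Abelian direction} ($U(t)\sim t^\rho L(t)/\Gamma(\rho+1)\Rightarrow$ transform asymptotics).

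First, finiteness of $\widehat U(s)$ for all $s>0$ gives $e^{-st}U(t)\to0$. Integration by parts then yields $\widehat U(s)=U(0)+s\int_0^\infty e^{-st}U(t)\,dt$; the term $U(0)$ is negligible since $V(1/s)\to\infty$ when $\rho>0$, and for $\rho=0$ it is harmless. Substituting $t=x/s$ gives
\[
\frac{\widehat U(s)}{V(1/s)}=\int_0^\infty e^{-x}\,\frac{U(x/s)}{V(1/s)}\,dx .
\]
The integrand converges pointwise to $e^{-x}x^\rho/\Gamma(\rho+1)$ by regular variation of $V$.

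To justify dominated convergence, split the range of $x$:
\begin{itemize}
\item For $x\ge1$, the Potter bounds of Theorem~\ref{thm:potter} give $U(x/s)/V(1/s)\le C x^{\rho+\delta}$.
\item For $x\le1$, monotonicity of $U$ gives $U(x/s)\le U(1/s)\le C V(1/s)$.
\end{itemize}
The limit is therefore $\Gamma(\rho+1)^{-1}\int_0^\infty e^{-x}x^\rho\,dx=1$, as claimed.

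\emph{Tauberian direction.} Define positive measures $\mu_s(dx):=dU(x/s)/V(1/s)$ on $[0,\infty)$. For each fixed $\lambda>0$,
\[
\int e^{-\lambda x}\mu_s(dx)=\frac{\widehat U(\lambda s)}{V(1/s)}=\frac{\widehat U(\lambda s)}{V(1/(\lambda s))}\cdot\frac{V(1/(\lambda s))}{V(1/s)}\longrightarrow \lambda^{-\rho}.
\]
Here $\lambda^{-\rho}$ is the Laplace transform of $\mu(dx)=x^{\rho-1}dx/\Gamma(\rho)$ when $\rho>0$, and of $\delta_0$ when $\rho=0$. The goal is $\mu_s([0,1])=U(1/s)/V(1/s)\to\mu([0,1])=1/\Gamma(\rho+1)$.

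I would follow Karamata's polynomial trick. Take any polynomial $P$ and set $g(x)=P(e^{-x})e^{-x}$. Expanding $P$ into exponentials shows $\int g\,d\mu_s\to\int g\,d\mu$. By Weierstrass, continuous functions $h$ on $[0,1]$ are uniformly approximated by such polynomials. The weight $e^{-x}$ together with the uniform bound $\int e^{-x}\mu_s(dx)\to1$ keeps the errors uniformly small. Hence $\int h(e^{-x})e^{-x}\mu_s(dx)\to\int h(e^{-x})e^{-x}\mu(dx)$ for every $h\in C[0,1]$.

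\emph{Main obstacle.} The step I expect to be hardest is passing to the discontinuous function $h=\mathbf 1_{[e^{-1},1]}(y)\,y^{-1}$, which encodes $\mathbf 1_{[0,1]}(x)$. I would handle it by sandwiching $h$ between continuous $h_-\le h\le h_+$ with $\int (h_+-h_-)(e^{-x})e^{-x}\,d\mu<\eps$. This is possible because the limit measure $\mu$ has no atom at $x=1$ and has a continuous density there. Taking $\limsup$ and $\liminf$ then gives $\mu_s([0,1])\to1/\Gamma(\rho+1)$, i.e.\ $U(t)\sim t^\rho L(t)/\Gamma(\rho+1)$ with $t=1/s$. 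Since $s\to0^+$ is arbitrary, the full limit $t\to\infty$ follows.
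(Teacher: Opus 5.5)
The paper does not prove this statement. It is quoted from \cite[Th.~1.7.1]{BGT} and used only once, in its Tauberian direction ($\widehat U\Rightarrow U$), in Step~2 of Theorem~\ref{thm:Tauberblow}. Your proposal is a correct, self-contained version of Karamata's classical argument.

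\emph{Abelian half.} You rescale by $t=x/s$ and apply dominated convergence. Monotonicity of $U$ controls the range $x\le 1$, and Potter bounds control $x\ge 1$.

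\emph{Tauberian half.} Convergence of $\int e^{-kx}\,\mu_s(dx)$ for $k=1,2,\dots$, together with Weierstrass approximation in the variable $y=e^{-x}$, gives convergence against $h(e^{-x})e^{-x}$ for every $h\in C[0,1]$. The eventual bound on $\int e^{-x}\,d\mu_s$ controls the approximation error. The jump of $\mathbf 1_{[0,1]}$ at $x=1$ is then removed by sandwiching, since $\mu$ puts arbitrarily small mass on small neighbourhoods of $x=1$ (it is either $\delta_0$ or $x^{\rho-1}dx/\Gamma(\rho)$).

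The other standard route to the Tauberian half is the continuity theorem for Laplace--Stieltjes transforms, i.e.\ Helly selection plus uniqueness of the transform. It phrases the result as vague convergence of $\mu_s$ to $\mu$. That is more conceptual but imports more machinery, whereas your argument needs nothing beyond Weierstrass approximation. The citation buys the paper brevity; your proof would make that step self-contained.

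One step needs correcting. The integration-by-parts identity is not $\widehat U(s)=U(0)+s\int_0^\infty e^{-st}U(t)\,dt$.
\begin{itemize}
\item Adopt the convention $U(0^-)=0$, so that the atom of $dU$ at the origin is counted in $\widehat U$. Then one has exactly $\widehat U(s)=s\int_0^\infty e^{-st}U(t)\,dt$, with no boundary term.
\item If that atom is excluded instead, the boundary term is $-U(0)$, not $+U(0)$.
\item The convention is forced when $\rho=0$. For $U\equiv 1$ one has $U(t)\to 1$, but $\int_{(0,\infty)}e^{-st}\,dU(t)=0$.
\end{itemize}
Consequently, your remark that an additive $U(0)$ is ``harmless'' for $\rho=0$ is false as written: if $L$ is bounded, $U(0)/L(1/s)$ does not tend to $0$. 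Once the convention is fixed, the term is simply absent. The same convention is what makes $\mu_s([0,1])=U(1/s)/V(1/s)$ consistent with $\int e^{-\lambda x}\,\mu_s(dx)=\widehat U(\lambda s)/V(1/s)$ in your Tauberian half.
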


\subsection{Regular variation on cones}

\begin{definition}[Regular variation on a cone]
\label{def:RVcone}
Let $\cone\subset\R^N\setminus\{0\}$ be a cone. A measurable function
$f:\R^N\to\R$ is said to be \emph{regularly varying at infinity on $\cone$
with index $\rho\in\R$} if there exist a function $V\in\RV_\rho$ and a
non-trivial measurable function $h:\cone\to\R$ such that
\[
\lim_{t\to\infty}\frac{f(tx)}{V(t)}=h(x),
\qquad x\in\cone,
\]
with convergence locally uniform on compact subsets of $\cone$.
The limit function $h$ is then $\rho$-homogeneous, that is,
\[
h(cx)=c^\rho h(x)
\qquad\text{for all }x\in\cone,\ c>0.
\]
\end{definition}

The homogeneity of $h$ follows from
\(\frac{f(tcx)}{V(t)}=\frac{f((tc)x)}{V(tc)}\cdot\frac{V(tc)}{V(t)}\to h(x)c^\rho\),
while the left-hand side converges to $h(cx)$.

\begin{remark}[Polar form]\label{rem:polar}\rm\, 
Since $h$ is positively $\rho$-homogeneous and measurable, there exists a
measurable function $g$ on $\sphere^{N-1}$ such that, on any subcone on
which $h>0$,
\[
h(x)=|x|^\rho g\!\left(\frac{x}{|x|}\right).
\]
The function $f$ is said to be \emph{asymptotically isotropic} if
$g$ is constant, and asymptotically \emph{anisotropic in amplitude}
otherwise. In either case, the radial scaling rate is the same in every
direction.
\end{remark}

\subsection{Operator-regular variation}

Regular variation on a cone allows anisotropy in amplitude but retains a
common radial scaling exponent in every direction. To accommodate genuinely
different scaling rates in different directions, one is naturally led to
matrix exponents and the theory of operator-regular variation
\cite{Sharpe1969,Meerschaert1988,MS}.

\begin{definition}[Operator regular variation]\label{def:ORV}
Let $E\in\R^{N\times N}$ be a matrix whose eigenvalues have positive real
parts, and define
\[
t^E:=\exp\!\bigl((\log t)E\bigr),\qquad t>0.
\]
Let $\cone\subset\R^N\setminus\{0\}$ be a cone that is invariant under the
family of dilations $\{t^E:t>0\}$. A measurable function
$f:\R^N\to\R$ is said to be \emph{operator-regularly varying on $\cone$
with index $\rho\in\R$} if there exist a function $V\in\RV_\rho$ and a
non-trivial measurable function $h:\cone\to\R$ such that
\[
\lim_{t\to\infty}\frac{f(t^E x)}{V(t)}
=
h(x),
\qquad x\in\cone,
\]
with convergence locally uniform on compact subsets of $\cone$.
\end{definition}

The assumption that all eigenvalues of $E$ have positive real parts ensures
that $\{t^E\}_{t>0}$ forms a one-parameter group of expanding linear
automorphisms. In particular,
\[
\|t^E x\|\to\infty \quad\text{as } t\to\infty,
\qquad
\|t^E x\|\to 0 \quad\text{as } t\to 0^+,
\]
for every $x\neq 0$.

\begin{proposition}[Anisotropic $E$-homogeneity of the limit function]
\label{prop:anisohom}
Let $f$ be operator-regularly varying on $\cone$ with index $\rho$.
Then the limit function $h$ is Anisotropic $E$-homogeneous of degree $\rho$, that is,
\[
h(s^E x)=s^\rho h(x),
\qquad s>0,\ x\in\cone.
\]
\end{proposition}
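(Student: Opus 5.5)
The plan is to copy the argument used after Definition~\ref{def:RVcone} for the isotropic homogeneity of $h$, with scalar dilations replaced by the group $\{t^E\}$. The one algebraic fact needed is the group law $(ts)^E=t^E s^E$ for $t,s>0$. It holds because $t^E=\exp((\log t)E)$, and the matrices $(\log t)E$ and $(\log s)E$ commute, so
\[
\exp((\log t)E)\exp((\log s)E)=\exp((\log t+\log s)E)=\exp((\log(ts))E).
\]
In particular $t^E$ is invertible and linear, and $s^E x\in\cone$ whenever $x\in\cone$, since $\cone$ is assumed $\{t^E\}$-invariant.

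Now fix $s>0$ and $x\in\cone$. Put $y:=s^E x\in\cone$ and write
\[
\frac{f(t^E s^E x)}{V(t)}=\frac{f((ts)^E x)}{V(ts)}\cdot\frac{V(ts)}{V(t)} .
\]
As $t\to\infty$ we also have $ts\to\infty$. The definition of operator-regular variation, applied at the point $x$ along the parameter $\tau=ts$, shows that the first factor tends to $h(x)$. Since $V\in\RV_\rho$, the second factor tends to $s^\rho$. The left-hand side equals $f(t^E y)/V(t)$, which by the definition applied at the point $y\in\cone$ tends to $h(y)=h(s^E x)$. Uniqueness of limits then gives $h(s^E x)=s^\rho h(x)$.

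There is essentially no obstacle; pointwise convergence on $\cone$ is all that is used. The only point requiring care is that both $x$ and $s^E x$ lie in $\cone$, so that the defining limit is available at both points. This is exactly where the $\{t^E\}$-invariance of $\cone$ enters, together with the group property of $t^E$.
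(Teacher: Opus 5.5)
Your proof is correct and matches the paper's argument. Both compute $f((ts)^E x)/V(t)$ in two ways, using the group law $(ts)^E=t^Es^E$ and $V(ts)/V(t)\to s^\rho$, and then use uniqueness of limits; your remark that the $\{t^E\}$-invariance of $\cone$ guarantees $s^Ex\in\cone$ spells out a step the paper leaves implicit.
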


\begin{proof} We first note that 
$(ts)^E=t^Es^E.$ Therefore, using the fact that $f$ is  operator-regularly varying on $\cone$ with index $\rho$, we have, as $t \rightarrow \infty,$ that
\[
\frac{f((ts)^Ex)}{V(t)}=\frac{f(t^E(s^Ex))}{V(t)}\to h(s^Ex)
\quad\text{and}\quad
\frac{f((ts)^Ex)}{V(t)}=\frac{f((ts)^Ex)}{V(ts)}\frac{V(ts)}{V(t)}\to h(x)\,s^\rho,
\]
implying the desired result.
\end{proof}
For later use in Section~\ref{sec:operator}, we recall the operator polar coordinates associated with a symmetric positive definite matrix $E$; see \cite{MS}.
\begin{remark}\label{rem:opolar}\rm
Assume in addition that $E$ is symmetric positive definite. For every
$x\neq0$, the function $t\longmapsto |t^{-E}x|^2$
is strictly decreasing on $(0,\infty)$, since
\[
\frac{d}{dt}|t^{-E}x|^2
=-\frac{2}{t}\langle E\,t^{-E}x,t^{-E}x\rangle<0.
\]
Moreover, we have $\displaystyle\lim_{t\to0^+}|t^{-E}x|=\infty,
\qquad
\displaystyle\lim_{t\to\infty}|t^{-E}x|=0.$
Hence there exists a unique $t>0$ such that $|t^{-E}x|=1$. We denote it by
\[
\tau(x):=\text{the unique }t>0\text{ such that }|t^{-E}x|=1.
\]

The map $(t,\theta)\mapsto t^E\theta$ from $(0,\infty)\times \mathbb S^{N-1}$ into $\mathbb R^N\setminus\{0\}$
is a homeomorphism. Writing
\[
x=\tau(x)^E\theta(x),
\qquad
\theta(x):=\tau(x)^{-E}x\in\mathbb S^{N-1},
\]
gives the associated operator polar decomposition. The operator balls are
\[
B_R^E:=\{0\}\cup\{x\neq0:\tau(x)\le R\}
      =R^E\overline B(0,1).
\]
They are ellipsoid-like sets satisfying $
|B_R^E| =\omega_N\,R^{\operatorname{tr}(E)},$ where
$\omega_N$ is the volume of the Euclidean unit ball.
Since
\[
|B_R^E|
=|\det(R^E)|\,|\overline B(0,1)|
=R^{\operatorname{tr}(E)}\omega_N.
\]
Thus $|E|:=\operatorname{tr}(E)$ plays the role of a {\it homogeneous dimension} of $\mathbb{R}^N$ with respect to $E$.

The limit function $h$ from Definition~\ref{def:ORV} admits the
representation
\[
h(x)=\tau(x)^\rho\,g(\theta(x)).
\]
For $E=I$ this reduces to the classical Euclidean polar decomposition.
For more general matrices $E$ (e.g. non-symmetric matrices with spectrum
in the open right half-plane), analogous constructions are available;
see \cite[Chapter~6]{MS}.
\end{remark}

%==================================================================
\section{The master blow-up theorem and consequences}\label{sec:main}

The aim of this section is to present the master blow-up theorem, which
unifies the blow-up phenomena corresponding to various classes of forcing
terms. We first fix the notion of solution.

\begin{definition}[Global weak solution]\label{def:weak}
Let $u_0\in L^1_{\mathrm{loc}}(\R^N)$, $u_0\ge 0$, and
$\mathbf{w}\in L^1_{\mathrm{loc}}(\R^N)$. A \emph{global weak
solution} of \eqref{main} is a function
$u\in L^p_{\mathrm{loc}}([0,\infty)\times\R^N)$, $u\ge 0$ a.e., such that
\begin{equation}\label{eq:weakform}
\int_0^T\!\!\!\int_{\R^N}\bigl(|u|^p+\mathbf{w}\bigr)\varphi\,dx\,dt
+\int_{\R^N}u_0\,\varphi(0, x)\,dx
=-\int_0^T\!\!\!\int_{\R^N}u\,\partial_t\varphi\,dx\,dt
-\int_0^T\!\!\!\int_{\R^N}u\,\Delta\varphi\,dx\,dt
\end{equation}
for every $T>0$ and every nonnegative
$\varphi\in C^{1,2}([0,T]\times\R^N)$ which is compactly supported in
space and satisfies $\varphi(T,\cdot)\equiv 0$.
\end{definition}

Every global classical (or mild) nonnegative solution of \eqref{main} is a
global weak solution; this follows from a standard integration by parts.

We next replace the qualitative assumption \eqref{eq:massbound} by a
regularly varying mass condition.

\begin{definition}\label{def:RVforcing}
A function $\mathbf{w}\in L^1_{\mathrm{loc}}(\mathbb R^N)$ is said to have
\emph{regularly varying mass of index $\gamma\in[0,N)$} if its mass function
\[
F(R):=\int_{|x|\le R}\mathbf{w}(x)\,dx
\]
is eventually positive and belongs to $\RV_\gamma$. Equivalently,
\[
F(R)=R^\gamma L(R),
\qquad L\in\RV_0,
\]
where $L$ is slowly varying.
\end{definition}

\begin{remark}\label{rem:hypcompare}\rm
The hypotheses \eqref{eq:massbound} and Definition~\ref{def:RVforcing}
are not comparable. Indeed, \eqref{eq:massbound} allows mass functions with
oscillations that are not regularly varying, while Definition~\ref{def:RVforcing}
allows $F(R)=R^\gamma L(R)$
with a slowly varying factor $L(R)\to0$, for which \eqref{eq:massbound} may
fail. Theorem~\ref{thm:RVblowup}(i) presented below covers both cases. In particular, it
does not require any uniform positive lower bound on $L$. By
Lemma~\ref{lem:SVbounds}, for every $\delta>0$,
\[
R^{-\delta}\lesssim L(R)\lesssim R^\delta,
\qquad R\gg1,
\]
so a slowly varying factor may vanish at infinity, but only at a
sub-polynomial rate. This control is sufficient for the subcritical
blow-up argument.
\end{remark}

We are now ready to state the main result of this section.

\begin{theorem}\label{thm:RVblowup}
Let $N\ge 1$, $\gamma\in[0,N)$, and let
$\mathbf{w}\in L^1_{\mathrm{loc}}(\R^N)$ be nonnegative with $\mathbf{w}\not\equiv 0$.
\begin{enumerate}[label=\textnormal{(\roman*)},leftmargin=*]
\item \emph{(Subcritical regime.)} Assume that either
\begin{enumerate}[label=\textnormal{(\alph*)},leftmargin=*]
\item $\displaystyle\liminf_{R\to\infty}R^{-\gamma}F(R)>0$, or
\item $\mathbf{w}$ has regularly varying mass of index $\gamma.$
\end{enumerate}
Then \eqref{main} admits no global weak solution whenever
$1<p<p_F(\gamma)$, with $p_F(\gamma)$ given by 
\begin{equation}\label{eq:pFgamma1}
p_F(\gamma):=
\begin{cases}
\dfrac{N-\gamma}{N-\gamma-2},& \gamma<N-2,\\[6pt]
\infty,& \gamma\ge N-2.
\end{cases}
\end{equation}
\item \emph{(Critical regime with amplification.)} Assume that $\mathbf{w}$ has
regularly varying mass of index $\gamma\in[0,N-2)$ (so that
$p_F(\gamma)<\infty$; in particular $N\ge 3$) with slowly varying factor
$L(R)\to\infty$ as $R\to\infty$. Then \eqref{main} admits no global weak
solution for $p=p_F(\gamma)$.
\end{enumerate}
\end{theorem}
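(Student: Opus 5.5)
The plan is the Mitidieri--Pohozaev test-function method with parabolic scaling $R=\sqrt{T}$. Suppose $u$ is a global weak solution. Fix smooth cut-offs $\eta\in C^\infty_c([0,1))$ and $\psi\in C_c^\infty(\R^N)$ with $0\le\eta,\psi\le1$, $\eta\equiv1$ on $[0,\tfrac12]$, $\psi\equiv1$ on $B_1$ and $\operatorname{supp}\psi\subset B_2$. Take
\[
\varphi(t,x)=\eta(t/T)^k\,\psi(x/R)^k
\]
with $k$ large, say $k>2p'$ where $p'=p/(p-1)$. This choice makes $\varphi^{-p'/p}|\partial_t\varphi|^{p'}$ and $\varphi^{-p'/p}|\Delta\varphi|^{p'}$ integrable. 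Then $\varphi$ is admissible in Definition~\ref{def:weak}.

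First, in \eqref{eq:weakform} we drop the term $\int u_0\varphi(0,\cdot)\ge0$. We estimate $\int\!\!\int u|\partial_t\varphi|$ and $\int\!\!\int u|\Delta\varphi|$ by the $\varepsilon$-Young inequality $ab\le\varepsilon a^p+C_\varepsilon b^{p'}$, applied with $a=u\varphi^{1/p}$. This gives
\[
\tfrac12\int_0^T\!\!\int u^p\varphi+\int_0^T\!\!\int \mathbf{w}\varphi
\le C\int_0^T\!\!\int \varphi^{-p'/p}\bigl(|\partial_t\varphi|^{p'}+|\Delta\varphi|^{p'}\bigr).
\]
Scaling shows the right-hand side is at most $C\,TR^N\bigl(T^{-p'}+R^{-2p'}\bigr)=C R^{N+2-2p'}$ when $T=R^2$.

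Second, since $\mathbf{w}\ge0$ and $\varphi\equiv1$ on $[0,T/2]\times B_R$, the forcing term satisfies
\[
\int\!\!\int\mathbf{w}\varphi\ge\tfrac{T}{2}F(R)=\tfrac12R^{2}F(R).
\]
Combining the two steps gives the key inequality
\[
F(R)\le C\,R^{N-2p'}\qquad\text{for all large }R,
\]
with $C$ independent of $R$.

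Third, we compare exponents. We have $N-\gamma-2p'<0$ exactly when $p'>(N-\gamma)/2$. If $\gamma\ge N-2$, then $N-\gamma\le2<2p'$, so this always holds. If $\gamma<N-2$, it is equivalent to $p<(N-\gamma)/(N-\gamma-2)=p_F(\gamma)$. Set $\kappa:=2p'-(N-\gamma)>0$ in the subcritical range.
\begin{itemize}[leftmargin=*]
\item Under hypothesis (a), $F(R)\ge cR^\gamma$ for large $R$. The key inequality then gives $c\le CR^{-\kappa}\to0$, a contradiction.
\item Under hypothesis (b), $F(R)=R^\gamma L(R)$, so $L(R)\le CR^{-\kappa}$. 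Lemma~\ref{lem:SVbounds} with $\delta=\kappa/2$ gives $R^{-\kappa/2}\le L(R)$, hence $R^{-\kappa/2}\le CR^{-\kappa}$, again a contradiction as $R\to\infty$. No lower bound on $L$ is needed beyond the sub-polynomial decay.
\end{itemize}
This proves (i). For (ii), at $p=p_F(\gamma)$ we have $\kappa=0$, so the key inequality reads $L(R)\le C$ for all large $R$. This contradicts $L(R)\to\infty$.

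The only delicate point is checking that the constant $C$ in the key inequality is genuinely independent of $R$ and $T$. This requires $k$ large so that $\varphi^{-p'/p}|\partial_t\varphi|^{p'}$ and $\varphi^{-p'/p}|\Delta\varphi|^{p'}$ are bounded multiples of $T^{-p'}$ and $R^{-2p'}$ on the support. In the critical case it also requires that no $\varepsilon$-loss enters the exponent, which holds since the argument uses only the boundedness of the right-hand side. Unlike the unforced Hayakawa case, no refined absorption of $\int u^p\varphi$ on the annulus is needed: the growth $L\to\infty$ alone supplies the contradiction.
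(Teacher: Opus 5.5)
Your proposal is correct and follows essentially the same Mitidieri--Pohozaev argument as the paper. It reaches the same key inequality $F(\sqrt T)\le C\,T^{N/2-p'}$ and concludes in the same way, via Lemma~\ref{lem:SVbounds} in (i)(b) and $L\to\infty$ in (ii). The one cosmetic difference is the time cutoff: the paper's vanishes near $t=0$, so the initial-data term is identically zero and the lower bound uses $[T/2,2T/3]$. Yours equals $1$ near $t=0$, and you discard $\int u_0\varphi(0,\cdot)\ge 0$ using $u_0\ge0$, which Definition~\ref{def:weak} guarantees, so this is equally valid.
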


\begin{proof}
Assume, for contradiction, that \eqref{main} admits a global weak solution
$u$ in the sense of Definition~\ref{def:weak}, and set $p':=p/(p-1)$.

\emph{Step 1: Choice of test functions.}
Fix $f,g\in C^\infty([0,\infty))$ with $0\le f,g\le 1$ and $g$ nonincreasing such that
\[f(s)=\begin{cases}
1 \hspace{0.12cm}\text{if}\hspace{0.12cm} 1/2\leq s\leq 3/4\\
0 \hspace{0.12cm}\text{if}\hspace{0.12cm} s\in [0,1/4]\cup[4/5,\infty)
\end{cases},\quad g(s)=\begin{cases}
1 \hspace{0.12cm}\text{if}\hspace{0.12cm}  s\in [0,1]\\
0 \hspace{0.12cm}\text{if}\hspace{0.12cm} s\geq 2.
\end{cases}
\]  	
 For $T>0,$ we  set
\[
\psi_T(t,x):=f_T(t)\,g_T(x),\qquad
f_T(t):=f(t/T)^{p'},\qquad
g_T(x):=g\bigl(|x|^2/T\bigr)^{2p'} .
\]
Then it follows that $\psi_T$ is admissible in \eqref{eq:weakform}, that means, it is non-negative,
smooth, compactly supported in space (in $\{|x|^2\le 2T\}$), and
$\psi_T(T,\cdot)\equiv 0$.

\emph{Step 2: The upper bound.}
Testing \eqref{eq:weakform} with $\varphi=\psi_T$, we note that
\[
\psi_T(0,x)=f(0)^{p'}g_T(x)=0,
\qquad x\in\R^N,
\]
since $f(0)=0$. Thus, the initial-data term vanishes:
\[
\int_{\R^N}u_0(x)\,\psi_T(0,x)\,dx=0.
\]
Therefore, from \eqref{eq:weakform} and using elementary inequality ($-a \leq |a|$ for $a \in \mathbb{R}$), we obtain
\begin{equation}\label{eq3.2}
\int_0^T\!\!\!\int_{\R^N}|u|^p\psi_T\,dx\,dt
+\int_0^T\!\!\!\int_{\R^N}\mathbf{w}\,\psi_T\,dx\,dt
\le \int_0^T\!\!\!\int_{\R^N}|u|\bigl(|\partial_t\psi_T|+|\Delta\psi_T|\bigr)\,dx\,dt .
\end{equation}
A direct computation gives, on the support of $\psi$, that
\begin{equation}\label{eq:pointwise1}
|\partial_t\psi_T|^{p'}\,\psi_T^{1-p'}
=\Bigl(\frac{p'}{T}\Bigr)^{p'}\bigl|f'(t/T)\bigr|^{p'}\,g_T(x)
\le \frac{C}{T^{p'}}\,\mathbf 1_{\{|x|^2\le 2T\}},
\end{equation}
because $\partial_t\psi_T=p'\,T^{-1}f(t/T)^{p'-1}f'(t/T)\,g_T$ and the
exponents recombine as $(p'-1)p'+p'(1-p')=0$. 

Similarly, writing
$g_T=G(|x|^2/T)$ with $G:=g^{2p'}$, one checks that
$$|G'|+|G''|\le C\,G^{\,1-1/p'},$$

as  $2p'-2=2p'(1-1/p').$ whence on the
support of $\psi_T$,
\begin{equation}\label{eq:pointwise2}
|\Delta g_T|\le \frac{C}{T}\,g_T^{\,1-1/p'},
\qquad\text{so}\qquad
|\Delta\psi_T|^{p'}\,\psi_T^{1-p'}
\le \frac{C}{T^{p'}}\,f_T(t)\,\mathbf 1_{\{|x|^2\le 2T\}} .
\end{equation}
Moreover, we note that $\partial_t\psi_T$ and $\Delta\psi_T$ vanish identically on set
$\{(x, t): \,\,\psi_T(x, t) =0\}$, because every term in their expansions contains a strictly
positive power of $f$ or $g$. 

Applying Young's inequality in the form
$$|u|\,a\le \tfrac14\,|u|^p\,\psi_T+C\,a^{p'}\,\psi_T^{1-p'},$$
for $a\in\{|\partial_t\psi_T|,|\Delta\psi_T|\}$ on the set $\{\psi_T>0\}.$

\[
|u|\,a\le \tfrac14\,|u|^p\,\psi_T+C\,a^{p'}\,\psi_T^{1-p'}
\qquad\text{on }\{\psi_T>0\},\quad a\in\{|\partial_t\psi_T|,|\Delta\psi_T|\},
\]
absorbing $\tfrac12\int\!\!\int|u|^p\psi_T$ (which is finite since
$u\in L^p_{\mathrm{loc}}$ and $\psi_T$ has compact support) into the
left-hand side of \eqref{eq3.2}, and integrating the bounds
\eqref{eq:pointwise1}--\eqref{eq:pointwise2} over
$[0,T]\times\{|x|^2\le 2T\}$, we arrive at
\begin{equation}\label{eq:UB}
\frac12\int_0^T\!\!\!\int_{\R^N}|u|^p\psi_T\,dx\,dt
+\int_0^T\!\!\!\int_{\R^N}\mathbf{w}\,\psi_T\,dx\,dt
\le C\,T^{\,1+\frac{N}{2}-p'} .
\end{equation}

\emph{Step 3: the lower bound.}
Since $f_T\equiv 1$ on $[T/2,2T/3]$ and $g_T\equiv 1$ on
$\{|x|\le\sqrt T\}$, and $w\ge 0$,
\begin{equation} \label{eq3.4}
\int_0^T\!\!\!\int_{\R^N}\mathbf{w}\,\psi_T\,dx\,dt
\ge \int_{T/2}^{2T/3}\!dt\int_{|x|\le\sqrt T}\mathbf{w}(x)\,dx
=\frac{T}{6}\,F\bigl(\sqrt T\bigr).
\end{equation}

\emph{Step 4: Conclusion.}
Combining \eqref{eq:UB} and \eqref{eq3.4},
\begin{equation}\label{eq:master}
F\bigl(\sqrt T\bigr)\le C\,T^{\,\frac{N}{2}-p'}\qquad(T\ge 1).
\end{equation}
Set $\alpha:=\frac{N-\gamma}{2}-p'$, and observe the elementary
equivalences
\[
\alpha<0 \iff p'>\tfrac{N-\gamma}{2} \iff 1<p<p_F(\gamma),
\qquad
\alpha=0 \iff p=p_F(\gamma)\ \ (\gamma<N-2).
\]

\smallskip
\noindent(i)\,(a) Under the $\liminf$ hypothesis there are $c>0$ and
$T_0\ge1$ with $F(\sqrt T)\ge c\,T^{\gamma/2}$ for $T\ge T_0$, so
\eqref{eq:master} yields $c\le C\,T^{\alpha}$ for $T\ge T_0$. Since
$\alpha<0$, the right-hand side tends to $0$ gives a contradiction for large $T$.

\smallskip
\noindent(i)\,(b) Under the RV hypothesis, $F(\sqrt T)=T^{\gamma/2}
L(\sqrt T)$ and \eqref{eq:master} becomes
\begin{equation}\label{eq:masterRV}
L\bigl(\sqrt T\bigr)\le C\,T^{\alpha}\qquad(T\ge T_1).
\end{equation}
Pick $\delta\in(0,-2\alpha)$. By Lemma~\ref{lem:SVbounds},
$L(\sqrt T)\ge T^{-\delta/2}$ for $T$ large, so
$T^{-\delta/2}\le C\,T^{\alpha}$ with $-\delta/2>\alpha$: a contradiction
for large $T$.

\smallskip
\noindent(ii) At $p=p_F(\gamma)$ one has $\alpha=0$ and
\eqref{eq:masterRV} reads $L(\sqrt T)\le C$, contradicting
$L(R)\to\infty$.

In all cases, the assumed global solution cannot exist, which completes the
proof.
\end{proof}

\begin{remark}\label{rem:lowdim}
\rm\,The nontriviality of $u$ (or of the initial datum $u_0$) is not needed in
the proof since the contradiction is generated entirely by the forcing
term. Furthermore, if $\gamma\ge N-2$, then $p_F(\gamma)=\infty$.
Consequently, nonexistence holds for every $p>1$. In particular, for
$N\le2$ and every $\gamma\in[0,N)$, one has $\gamma\ge N-2$, and hence
nonexistence holds for all $p>1$.
\end{remark}

\begin{remark}[Comparison with the qualitative version]\label{rem:quantqual}
\rm
Under hypothesis (a) alone, the argument yields a contradiction only when
$\alpha<0$. In the critical case $\alpha=0$, the method is inconclusive.
The regularly varying structure provides the additional information needed
at the critical exponent. Indeed, the slowly varying factor $L$ survives
the cancellation of powers at $p=p_F(\gamma)$ and turns the divergence
$L(R)\to\infty$ into a contradiction.
\end{remark}

We now present several special cases of \eqref{main} arising from
particular forcing terms. The corresponding blow-up results follow
immediately from Theorem~\ref{thm:RVblowup}.

\begin{example}[Power forcings]\label{ex:power}
$\mathbf{w}(x)=|x|^{-(N-\gamma)}\mathbf 1_{\{|x|\ge 1\}}$ with $\gamma\in(0,N)$ has
$F(R)=|\sphere^{N-1}|(R^\gamma-1)/\gamma$, so
$L(R)\to|\sphere^{N-1}|/\gamma$ and Theorem~\ref{thm:RVblowup}\,(i) yields
blow-up for $1<p<p_F(\gamma)$.
\end{example}

\begin{example}[Logarithmic amplification]\label{ex:log}
For $\gamma\in(0,N)$ and $a>0$, let
\[
\mathbf{w}(x)=|x|^{-(N-\gamma)}\bigl(\log(2+|x|)\bigr)^{a}\,\mathbf 1_{\{|x|\ge 1\}}.
\]
By Karamata's integration theorem (Theorem~\ref{thm:karamataint}),
$F(R)\sim \frac{|\sphere^{N-1}|}{\gamma}R^\gamma(\log R)^{a}$, so
$L(R)\to\infty$. For $\gamma\in(0,N-2)$,
Theorem~\ref{thm:RVblowup}\,(ii) gives blow-up at $p=p_F(\gamma)$, beyond
the reach of the qualitative hypothesis~(a).
\end{example}

\begin{example}[Iterated logarithms]\label{ex:loglog}
$\mathbf{w}(x)=|x|^{-(N-\gamma)}\log\log(e^e+|x|)\,\mathbf 1_{\{|x|\ge 1\}}$ yields
$F(R)\sim C\,R^\gamma\log\log R$. The amplification is genuine
($L\to\infty$, but extremely slowly), and
Theorem~\ref{thm:RVblowup}\,(ii) still applies for $\gamma\in(0,N-2)$.
\end{example}

\begin{example}[A decaying slowly varying factor]\label{ex:decayL}
$\mathbf{w}(x)=|x|^{-(N-\gamma)}\bigl(\log(2+|x|)\bigr)^{-1}\mathbf 1_{\{|x|\ge 1\}}$
produces $F(R)\sim \frac{|\sphere^{N-1}|}{\gamma}\,R^\gamma/\log R$, so
$L(R)\to 0$ and hypothesis~(a) fails. Nevertheless,
Theorem~\ref{thm:RVblowup}\,(i)\,(b) still yields blow-up in the whole
subcritical range $1<p<p_F(\gamma)$. At the critical exponent
$p=p_F(\gamma)$, inequality \eqref{eq:masterRV} becomes
$(\log\sqrt T)^{-1}\le C$, which is consistent: the test-function method
is silent there (see also Section~\ref{sec:open}).
\end{example}

%==================================================================
\section{Sharpness: $\limsup$ does not suffice}\label{sec:sharp}

This section examines the role of the lower growth condition
\eqref{eq:massbound}. We show that the $\liminf$ appearing in
\eqref{eq:massbound} cannot, in general, be replaced by a $\limsup$
without losing the conclusion of Theorem~\ref{thm:RVblowup}(i)(a) on the
entire subcritical range. We also construct mass functions with positive
$\limsup$ but vanishing $\liminf$, illustrating that the two one-sided
conditions are genuinely different. In particular, such mass functions
cannot be regularly varying. Now we state the main result showing that $\limsup$ is strictly weaker than $\liminf.$

\begin{proposition}\label{prop:limsupweak}
Let $\gamma\in(0,N)$. There exists a nonnegative
$\mathbf{w}\in L^1_{\mathrm{loc}}(\R^N)$ such that
\[
\limsup_{R\to\infty}\frac{F(R)}{R^{\gamma}}>0
\qquad\text{while}\qquad
\liminf_{R\to\infty}\frac{F(R)}{R^{\gamma}}=0 .
\]
In particular $F\notin\RV_\gamma$, and neither hypothesis of
Theorem~\textnormal{\ref{thm:RVblowup}\,(i)} is satisfied.
\end{proposition}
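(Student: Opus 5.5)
We will build $\mathbf{w}$ as a sum of constant densities supported on thin annuli placed at super-exponentially separated radii, so that $F$ is a nondecreasing step-like function. Each annulus raises $F$ to size comparable to $R_k^\gamma$ just beyond it, but $F$ then stays flat over a long range of $R$, and on that range $R^{-\gamma}F(R)$ decays to zero.

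\emph{Construction.} We fix radii $R_k$ with $R_1\ge 1$ and $R_{k+1}=R_k^{\,k+1}$ (any choice with $R_{k+1}/R_k\to\infty$ fast enough will do, e.g.\ $R_{k+1}=\exp(R_k)$). We set
\[
\mathbf{w}(x):=\sum_{k\ge1} c_k\,\mathbf 1_{\{R_k\le |x|\le 2R_k\}}(x),
\]
and choose $c_k\ge0$ so that the $k$-th annulus carries mass exactly $R_k^\gamma$. Since the annulus has volume $\omega_N(2^N-1)R_k^N$, this means $c_k:=R_k^{\gamma}/\bigl(\omega_N(2^N-1)R_k^{N}\bigr)$. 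The annuli are disjoint once $R_{k+1}>2R_k$, so $\mathbf{w}$ is nonnegative, locally bounded, and in particular lies in $L^1_{\mathrm{loc}}(\R^N)$.

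\emph{Limsup.} At $R=2R_k$ we have $F(2R_k)\ge R_k^\gamma$, hence $F(2R_k)/(2R_k)^\gamma\ge 2^{-\gamma}$, and so $\limsup_{R\to\infty} R^{-\gamma}F(R)\ge 2^{-\gamma}>0$.

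\emph{Liminf.} At $R=R_{k+1}$, which lies just before the $(k+1)$-st annulus, we have $F(R_{k+1})=\sum_{j\le k}R_j^\gamma\le kR_k^\gamma$, using $R_j\le R_k$ for $j\le k$. Therefore
\[
\frac{F(R_{k+1})}{R_{k+1}^\gamma}\le k\Bigl(\frac{R_k}{R_{k+1}}\Bigr)^{\gamma}=k\,R_k^{-k\gamma}\longrightarrow 0,
\]
since $\gamma>0$. It follows that $\liminf_{R\to\infty} R^{-\gamma}F(R)=0$. This is the only point where one must check that the separation beats the cumulative mass, and it is where $\gamma>0$ is used: for $\gamma=0$ a bounded increasing $F$ could not behave this way. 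Monotonicity of $F$ is not needed; evaluating along the two sequences suffices.

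\emph{Consequences.} Hypothesis (a) of Theorem~\ref{thm:RVblowup}\,(i) fails by the liminf computation. Suppose instead $F\in\RV_\gamma$. By the uniform convergence in Theorem~\ref{thm:potter}, applied with $t=R_{k+1}$ and $x=1$, and with $t=2R_k$ against $t=R_{k+1}$, we would get $F(tx)/F(t)\to x^\gamma$. We do not rely on this directly, because RV alone permits $L\to0$. Instead we argue as follows. $F$ is constant on $[2R_k,R_{k+1}]$, so $F(R_{k+1})/F(2R_k)=1$, and setting $t=2R_k$, $x_k=R_{k+1}/(2R_k)\to\infty$ gives $F(tx_k)/F(t)=1$. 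The Potter lower bound of Theorem~\ref{thm:potter}, with $\delta<\gamma$, gives $F(tx_k)/F(t)\ge(1-\delta)x_k^{\gamma-\delta}\to\infty$, which is a contradiction. Hence $F\notin\RV_\gamma$, and hypothesis (b) fails as well.
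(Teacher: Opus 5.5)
Your proof is correct and follows the paper's construction: constant densities on the disjoint annuli $\{R_k\le|x|\le 2R_k\}$ with super-exponentially separated radii, each carrying mass $\asymp R_k^\gamma$, with the $\limsup$ read off at $R=2R_k$ and the $\liminf$ just before the next annulus. The differences are minor: you use the cruder bound $\sum_{j\le k}R_j^\gamma\le kR_k^\gamma$, and you exclude $\RV_\gamma$ via the Potter lower bound on the flat stretch $[2R_k,R_{k+1}]$, whereas the paper uses $F(2R_k)/F(R_k)\to\infty$ against $F(2R)/F(R)\to 2^\gamma$. One small repair is needed: take $R_1>2$ rather than $R_1\ge 1$, since $R_1=1$ makes the recursion $R_{k+1}=R_k^{k+1}$ constant and any $R_1\le 2$ violates $R_2>2R_1$.
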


\begin{proof}
Let $R_k:=2^{2^k}$, $k\ge 1$, and define the annuli
$$A_k:=\{x\in\R^N: R_k\le |x|\le 2R_k\}.$$ 
Since $2R_k<R_{k+1}$ for every $k \geq 1,$ the family $(A_k)_{k \geq 1}$ is pairwise disjoint. Define
\[
\mathbf{w}:=\sum_{k\ge 1} a_k\,\mathbf 1_{A_k},
\qquad a_k:=R_k^{\gamma-N}.
\]
Then \(w\ge0\), and \(\mathbf{w}\in L^1_{\mathrm{loc}}(\R^N)\).

Let
\[
F(R):=\int_{|x|\le R} \mathbf{w}(x)\,dx.
\]
For each \(k\ge1\), the mass of \(\mathbf{w}\) on \(A_k\) is
\[
m_k:=\int_{A_k}\mathbf{w}\,dx
= a_k\,|A_k|
= R_k^{\gamma-N}\,\omega_N\bigl((2R_k)^N-R_k^N\bigr)
= \omega_N(2^N-1)\,R_k^\gamma.
\]
Thus
\[
m_k=c_N R_k^\gamma,
\qquad c_N:=\omega_N(2^N-1)>0.
\]

\emph{Positive $\limsup$.} We first show that
\[
\limsup_{R\to\infty}\frac{F(R)}{R^\gamma}>0.
\]
Indeed, for \(R=2R_k\), the ball \(B_{2R_k}\) contains the whole annulus \(A_k\), so
\[
F(2R_k)\ge m_k=c_N R_k^\gamma
= c_N\,2^{-\gamma}(2R_k)^\gamma.
\]
Therefore
\[
\limsup_{R\to\infty}\frac{F(R)}{R^\gamma}
\ge c_N\,2^{-\gamma}>0.
\]

\emph{Vanishing $\liminf$.}  Next, we prove that
\[
\liminf_{R\to\infty}\frac{F(R)}{R^\gamma}=0.
\]
Observe that
\[
\frac{R_{k+1}}2=2^{2^{k+1}-1}\ge 2^{2^k+1}=2R_k
\qquad\text{for all }k\ge1,
\]
since \(2^{k+1}-1-(2^k+1)=2^k-2\ge0\). Hence the ball \(B_{R_{k+1}/2}\)
meets only the annuli \(A_1,\dots,A_k\), and therefore
\[
F(R_{k+1}/2)=\sum_{j=1}^k m_j
= c_N\sum_{j=1}^k R_j^\gamma.
\]
Moreover,
\[
\sum_{j=1}^k R_j^\gamma
=R_k^\gamma\sum_{j=1}^k 2^{-\gamma(2^k-2^j)}
\le R_k^\gamma\sum_{m=0}^\infty 2^{-\gamma m}
\le C R_k^\gamma
\]
for some constant \(C>0\) independent of \(k\). Consequently,
\[
\frac{F(R_{k+1}/2)}{(R_{k+1}/2)^\gamma}
\le C\,2^\gamma\Bigl(\frac{R_k}{R_{k+1}}\Bigr)^\gamma
= C\,2^\gamma\,2^{-\gamma 2^k}\xrightarrow[k\to\infty]{}0.
\]
Thus
\[
\liminf_{R\to\infty}\frac{F(R)}{R^\gamma}=0.
\]

 %Finally, a regularly
%varying function of index $\gamma$ satisfies
%$R^{-\gamma}F(R)=L(R)$ with $L$ slowly varying, and the simultaneous
%validity of the two displays above is incompatible with the uniform
%convergence theorem (Theorem~\ref{thm:potter}); hence $F\notin\RV_\gamma$.

Finally, \(F\notin\RV_\gamma\). Indeed,
\[
F(R_k)=\sum_{j=1}^{k-1}m_j,
\qquad
F(2R_k)=\sum_{j=1}^{k}m_j,
\]
and the same estimate on partial sums yields
\[
F(R_k)\le C R_{k-1}^\gamma,
\qquad
F(2R_k)\ge m_k=c_N R_k^\gamma.
\]
Hence
\[
\frac{F(2R_k)}{F(R_k)}
\ge c\Bigl(\frac{R_k}{R_{k-1}}\Bigr)^\gamma
= c\,2^{\gamma 2^{k-1}}
\xrightarrow[k\to\infty]{}\infty
\]
for some constant \(c>0\). This is incompatible with regular variation of index \(\gamma\), which would imply
\[
\frac{F(2R)}{F(R)}\to 2^\gamma
\qquad\text{as }R\to\infty.
\]
Therefore \(F\notin\RV_\gamma\).

The final assertion follows immediately.
\end{proof}

\begin{remark}\label{rem:sparse}\rm 
 The above example concentrates the mass on annuli whose radii grow doubly
exponentially, thereby creating arbitrarily large gaps between successive
scales. This allows \(R^{-\gamma}F(R)\) to oscillate between values bounded
away from zero and values arbitrarily close to zero. It is the extreme
sparseness of the support, rather than any subtle cancellation, that
drives the failure of regular variation. Whether some blow-up range can still be recovered under the sole
assumption $\limsup_{R\to\infty} R^{-\gamma}F(R)>0$
appears to be open; see Section~\ref{sec:open}. On the other hand, the usual test-function argument still yields blow-up whenever there exists
\(\gamma'\ge0\) such that $\liminf_{R\to\infty} R^{-\gamma'}F(R)>0,$ namely for $1<p<p_F(\gamma').$
In the present construction this condition fails for every \(\gamma'>0\),
but it holds for \(\gamma'=0\), since \(F(R)\ge m_1>0\) for all
\(R\ge 2R_1\). Thus one at least recovers the Bandle--Levine--Zhang range $1<p<\frac{N}{N-2}.$
\end{remark}

%==================================================================
\section{Classification of regularly varying masses}\label{sec:class}

In this section we relate the asymptotics of the density $\mathbf{w}$ to those of
its mass $F$. Throughout, $\mathbf{w}\in L^1_{\mathrm{loc}}(\R^N)$ is nonnegative
and we use the spherical average
\[
\overline{\mathbf{w}}(r):=\frac{1}{|\sphere^{N-1}|}
\int_{\sphere^{N-1}}\mathbf{w}(r\theta)\,d\sigma(\theta),
\qquad\text{so that}\qquad
F(R)=|\sphere^{N-1}|\int_0^R r^{N-1}\,\overline{\mathbf{w}}(r)\,dr .
\]

\begin{theorem}\label{thm:classification}
Let $\mathbf{w}\ge 0$ be locally integrable.
\begin{enumerate}[label=\textnormal{(\arabic*)},leftmargin=*]
\item \emph{(Integrable regime, $\gamma=0$.)}
\(F(R)\to M\in(0,\infty)\) as \(R\to\infty\) if and only if
\(\mathbf{w}\in L^1(\R^N)\) and \(\int_{\R^N}\mathbf{w}\,dx=M\). In that case
$F\in\RV_0$ with $L(R)\to M$, and Theorem~\ref{thm:RVblowup}\,(i) gives
blow-up for $1<p<p_F(0)=\frac{N}{N-2}$ $(N\ge3)$, recovering the framework
of \cite{BandleLevineZhang2000}.
\item \emph{(Power regime, $0<\gamma<N$.)} If
$\overline{\mathbf{w}}(r)\sim c\,r^{-(N-\gamma)}$ as $r\to\infty$ for some $c>0$,
then
\[
F(R)\sim \frac{c\,|\sphere^{N-1}|}{\gamma}\,R^{\gamma},
\]
so $F\in\RV_\gamma$ with $L(R)\to c|\sphere^{N-1}|/\gamma$.
\item \emph{(Borderline regime, $\gamma=0$ with amplification.)} If
$\overline{\mathbf{w}}(r)\sim c\,r^{-N}$ as $r\to\infty$ for some $c>0$, then
\[
F(R)\sim c\,|\sphere^{N-1}|\,\log R ,
\]
so $F\in\RV_0$ with $L(R)\to\infty$; for $N\ge 3$,
Theorem~\ref{thm:RVblowup} gives blow-up in the closed range
$1<p\le p_F(0)=\frac{N}{N-2}$, critical exponent included.
\end{enumerate}
\end{theorem}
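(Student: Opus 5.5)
The three parts reduce to elementary facts about the radial profile $r\mapsto r^{N-1}\overline{\mathbf{w}}(r)$ together with Karamata's integration theorem (Theorem~\ref{thm:karamataint}), followed in each case by an application of Theorem~\ref{thm:RVblowup}.

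For (1), monotone convergence gives the equivalence directly. Since $\mathbf{w}\ge0$, $F$ is nondecreasing and $F(R)\uparrow\int_{\R^N}\mathbf{w}\,dx\in[0,\infty]$. Hence $F(R)\to M\in(0,\infty)$ holds exactly when $\mathbf{w}\in L^1$ with integral $M$. A function with a positive finite limit is trivially in $\RV_0$, because $F(Rx)/F(R)\to M/M=1$; its slowly varying factor is $L=F\to M$. Theorem~\ref{thm:RVblowup}(i)(b) with $\gamma=0$ then gives blow-up for $1<p<N/(N-2)$.

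For (2), write $F(R)=|\sphere^{N-1}|\int_0^R V(r)\,dr$ with $V(r):=r^{N-1}\overline{\mathbf{w}}(r)$. The hypothesis gives $V(r)\sim c\,r^{\gamma-1}$, so $V$ is eventually positive and $V\in\RV_{\gamma-1}$ with $\gamma-1>-1$. Theorem~\ref{thm:karamataint} yields $\int_0^RV\sim R\,V(R)/\gamma\sim cR^\gamma/\gamma$. Two technical points need care. First, $V$ may vanish near $0$, whereas regular variation only concerns large $r$. Second, local integrability of $V$ follows from $\mathbf{w}\in L^1_{\mathrm{loc}}$. To avoid any subtlety with Karamata's theorem on $[0,\infty)$, I would give the direct argument instead: for $\eps>0$ choose $r_0$ with $|V(r)-cr^{\gamma-1}|\le\eps r^{\gamma-1}$ for $r\ge r_0$, integrate from $r_0$ to $R$, and note that the contribution of $[0,r_0]$ is a constant, which is $o(R^\gamma)$ since $\gamma>0$. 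Then $F\in\RV_\gamma$ with $L\to c|\sphere^{N-1}|/\gamma$.

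For (3), Karamata's theorem does not apply, since $V(r)\sim c\,r^{-1}$ has index $-1$, which is the excluded endpoint. This is the main obstacle, and I would handle it by the same direct $\eps$-argument. For $r\ge r_0$, $(c-\eps)/r\le V(r)\le(c+\eps)/r$. Integrating gives $(c-\eps)\log(R/r_0)\le\int_{r_0}^RV\le(c+\eps)\log(R/r_0)$, and the finite contribution of $[0,r_0]$ is $o(\log R)$. Hence $F(R)\sim c|\sphere^{N-1}|\log R$. Since $\log R$ is slowly varying and asymptotic equivalence preserves regular variation, $F\in\RV_0$ with $L=F\to\infty$. Theorem~\ref{thm:RVblowup}(i)(b) then gives blow-up for $1<p<N/(N-2)$, and part (ii), with $\gamma=0<N-2$ because $N\ge3$, covers $p=N/(N-2)$.
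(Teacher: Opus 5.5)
Your proposal is correct and follows the paper's proof. Part (1) uses monotone convergence. Parts (2) and (3) integrate the radial profile $V(r)=r^{N-1}\overline{\mathbf{w}}(r)$ after splitting off a bounded initial interval, whose contribution is $O(1)$. The blow-up conclusions are then read off from Theorem~\ref{thm:RVblowup}, with part (ii) at $\gamma=0<N-2$ for the critical case in (3).

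The only deviation is in (2). You replace the paper's appeal to Karamata's integration theorem with a direct $\eps$-estimate. That is fully adequate here, because $V$ is asymptotic to an exact power with a constant coefficient. In (3) you spell out the elementary $\int_{r_0}^R (c\pm\eps)\,dr/r$ computation that the paper leaves implicit, and you rightly note that Theorem~\ref{thm:karamataint} does not cover index $-1$.
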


\begin{proof} 
\emph{(1)} Suppose first that \(\mathbf{w}\in L^1(\R^N)\) and
\(\int_{\R^N}\mathbf{w}(x)\,dx=M>0\). Since \(\mathbf{w}\ge0\),
\[
F(R)=\int_{|x|\le R}\mathbf{w}(x)\,dx \uparrow \int_{\R^N}\mathbf{w}(x)\,dx=M
\qquad (R\to\infty)
\]
by the monotone convergence theorem. Conversely, if \(F(R)\to M\in(0,\infty)\),
then again by monotone convergence,
\[
\int_{\R^N}\mathbf{w}(x)\,dx
=\lim_{R\to\infty}\int_{|x|\le R}\mathbf{w}(x)\,dx
=\lim_{R\to\infty}F(R)=M,
\]
so \(\mathbf{w}\in L^1(\R^N)\) and \(\int_{\R^N}\mathbf{w}\,dx=M\).
In particular, \(F(R)\to M\), hence \(F\in\RV_0\) with slowly varying factor
\(L(R)\to M\).

\emph{(2)} Assume $\overline{\mathbf{w}}(r)\sim c\,r^{-(N-\gamma)}
\,\, (r\to\infty),$
with \(0<\gamma<N\) and \(c>0\). Then
\[
V(r):=r^{N-1}\overline{\mathbf{w}}(r)\sim c\,r^{\gamma-1},
\]
hence \(V\in\RV_{\gamma-1}\). Since \(\gamma-1>-1\), Karamata's
integration theorem (Theorem~\ref{thm:karamataint}) yields
\[
\int_1^R V(r)\,dr \sim \frac{R\,V(R)}{\gamma}.
\]
Using \(R\,V(R)\sim cR^\gamma\), we obtain $\int_1^R r^{N-1}\overline{\mathbf{w}}(r)\,dr \sim \frac{c}{\gamma}R^\gamma.$
Therefore, we have
\[
F(R)
=|\sphere^{N-1}|\int_0^R r^{N-1}\overline{\mathbf{w}}(r)\,dr
=|\sphere^{N-1}|\int_0^1 r^{N-1}\overline{\mathbf{w}}(r)\,dr
 + |\sphere^{N-1}|\int_1^R r^{N-1}\overline{\mathbf{w}}(r)\,dr.
\]
The first term is finite by local integrability and negligible compared with
\(R^\gamma\). Hence
\[
F(R)\sim \frac{c\,|\sphere^{N-1}|}{\gamma}\,R^\gamma.
\]
In particular \(F\in\RV_\gamma\).

\emph{(3)} Assume
\[
\overline{\mathbf{w}}(r)\sim c\,r^{-N}
\qquad (r\to\infty),
\]
with \(c>0\). Then
\[
r^{N-1}\overline{\mathbf{w}}(r)\sim c\,r^{-1},
\]
and therefore
\[
F(R)=|\sphere^{N-1}|\int_0^1 r^{N-1}\overline{\mathbf{w}}(r)\,dr
    +|\sphere^{N-1}|\int_1^R r^{N-1}\overline{\mathbf{w}}(r)\,dr
\sim c\,|\sphere^{N-1}|\log R.
\]
Thus \(F\in\RV_0\) with slowly varying factor \(L(R)\sim c|\sphere^{N-1}|\log R\),
in particular \(L(R)\to\infty\).
%(1) Monotone convergence. (2)--(3) Karamata's integration theorem
%(Theorem~\ref{thm:karamataint}) applied to
%$V(r)=r^{N-1}\overline{\mathbf{w}}(r)\in\RV_{\gamma-1}$ (respectively
%$\RV_{-1}$); in case (3),
%$\int_1^R r^{-1}(c+o(1))\,dr\sim c\log R$, and the contribution of
%$\int_0^1$ is finite by local integrability.
\end{proof}

\begin{remark}[Monotone density: the converse of (2)]\label{rem:monodens}\rm 
The implications in Theorem~\ref{thm:classification}\,(2)--(3) cannot be
reversed in general: oscillating densities (e.g.\
$\overline{\mathbf{w}}(r)=r^{-(N-\gamma)}(2+\sin\log r)$ suitably corrected) may
produce $F(R)\asymp R^\gamma$, and even $F\in \RV_\gamma$, without
$\overline{\mathbf{w}}$ having an exact power asymptotic. The converse \emph{does}
hold under monotonicity: if $r\mapsto r^{N-1}\overline{\mathbf{w}}(r)$ is
ultimately monotone and $F\in\RV_\gamma$ with $\gamma>0$, then the
monotone density theorem \cite[Th.~1.7.2]{BGT} yields
$r^{N-1}\overline{\mathbf{w}}(r)\sim \gamma\,r^{\gamma-1}L(r)/|\sphere^{N-1}|$, i.e.\
$\overline{\mathbf{w}}(r)\sim \gamma\,r^{-(N-\gamma)}L(r)/|\sphere^{N-1}|$.
\end{remark}

\begin{remark}[Monotone density: converse up to slow variation]\label{rem:monodens2}\rm
The implications in Theorem~\ref{thm:classification}\,(2)--(3) are not
reversible in general: oscillatory densities  may yield
\(F(R)\asymp R^\gamma\), or even \(F\in\RV_\gamma\), without
\(\overline{\mathbf{w}}(r)\) admitting an exact power asymptotic. For instance, fix \(0<\gamma<N\) and let
\[
L(r):=2+\sin(\log\log r), \qquad r\ge e^e,
\]
extended arbitrarily on \((0,e^e)\) so as to remain locally integrable.
Then \(L\) is slowly varying but does not converge, and the radial density
\[
\mathbf{w}(x):=|x|^{-(N-\gamma)}L(|x|)\mathbf 1_{\{|x|\ge e^e\}}+\mathbf 1_{\{|x|<e^e\}}
\]
satisfies
\[
\overline{\mathbf{w}}(r)=r^{-(N-\gamma)}L(r)
\]
for \(r\ge e^e\). Hence \(\overline{\mathbf{w}}(r)\) has no exact power asymptotic,
whereas Karamata's theorem yields
\[
F(R)\sim \frac{|\sphere^{N-1}|}{\gamma}R^\gamma L(R),
\]
so \(F\in\RV_\gamma\).

A converse is recovered under monotonicity, up to a slowly varying factor.
Indeed, let
\[
U(r):=r^{N-1}\overline{\mathbf{w}}(r),
\qquad\text{so that}\qquad
F(R)=|\sphere^{N-1}|\int_0^R U(r)\,dr.
\]
If \(U\) is ultimately monotone and \(F\in\RV_\gamma\) for some
\(\gamma>0\), say \(F(R)=R^\gamma L(R)\), then the monotone density
theorem \cite[Th.~1.7.2]{BGT} yields
\[
U(r)\sim \frac{\gamma}{|\sphere^{N-1}|}\,\frac{F(r)}{r}
= \frac{\gamma}{|\sphere^{N-1}|}\,r^{\gamma-1}L(r).
\]
Equivalently,
\[
\overline{\mathbf{w}}(r)\sim
\frac{\gamma}{|\sphere^{N-1}|}\,r^{-(N-\gamma)}L(r).
\]
In particular, if \(F(R)\sim C R^\gamma\) with \(C>0\), then
\[
\overline{\mathbf{w}}(r)\sim
\frac{\gamma C}{|\sphere^{N-1}|}\,r^{-(N-\gamma)},
\]
which is the genuine converse of Theorem~\ref{thm:classification}\,(2).
\end{remark}

The next result treats densities with an angular profile.

\begin{theorem}[Spectral form]\label{thm:spectral}
Let $\mathbf{w}\ge 0$  be locally integrable and suppose there exist
$\rho\in[-N,0)$, $\widetilde L\in\RV_0$ and a bounded measurable
$g:\sphere^{N-1}\to[0,\infty)$ such that
\begin{equation}\label{eq:profile}
\mathbf{w}(r\theta)=r^{\rho}\,\widetilde L(r)\,\bigl(g(\theta)+\eps(r,\theta)\bigr),
\qquad
\sup_{\theta\in\sphere^{N-1}}|\eps(r,\theta)|\xrightarrow[r\to\infty]{}0 .
\end{equation}
Set $M_g:=\int_{\sphere^{N-1}}g\,d\sigma$.
\begin{enumerate}[label=\textnormal{(\roman*)},leftmargin=*]
\item If \(\rho\in(-N,0)\) and \(M_g>0\), then, with
\(\gamma:=N+\rho\in(0,N)\),
\[
F(R)\sim \frac{M_g}{\gamma}\,R^{\gamma}\,\widetilde L(R).
\]
In particular \(F\in\RV_\gamma\). The blow-up holds for $1<p<p_F(\gamma)$, and also at
$p=p_F(\gamma)$ when $\gamma<N-2$ and $\widetilde L(R)\to\infty$.
\item If \(\rho=-N\) and \(M_g>0\), then
\[
F(R)=C_0+M_g\,\widehat\ell(R)\,(1+o(1)),
\qquad
\widehat\ell(R):=\int_1^R \widetilde L(r)\,\frac{dr}{r},
\]
for some finite constant \(C_0\ge0\). Moreover, \(\widehat\ell\) is slowly
varying. If \(\widehat\ell(R)\to\infty\), then
\[
F(R)\sim M_g\,\widehat\ell(R),
\]
so \(F\in\RV_0\) with slowly varying factor tending to infinity, and for $N\ge 3$ blow-up holds in the
\emph{closed} range $1<p\le \frac{N}{N-2}$.
\item If \(M_g=0\), then \(g=0\) a.e. on \(\sphere^{N-1}\). In particular,
\[
\int_{\sphere^{N-1}} \mathbf{w}(r\theta)\,d\sigma
=o\bigl(r^\rho \widetilde L(r)\bigr).
\]
Hence, if \(\rho>-N\),
\[
F(R)=o\bigl(R^{N+\rho}\widetilde L(R)\bigr)+O(1),
\]
whereas if \(\rho=-N\),
\[
F(R)=o\bigl(\widehat\ell(R)\bigr)+O(1).
\]
No blow-up conclusion follows from \eqref{eq:profile} alone in this degenerate case.
\end{enumerate}
\end{theorem}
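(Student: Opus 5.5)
The plan is to reduce everything to the one-dimensional radial profile. Integrating \eqref{eq:profile} over the sphere gives, for $r\ge r_0$,
\[
|\sphere^{N-1}|\,r^{N-1}\overline{\mathbf{w}}(r)=r^{N-1+\rho}\widetilde L(r)\bigl(M_g+\eta(r)\bigr),
\qquad |\eta(r)|\le |\sphere^{N-1}|\sup_\theta|\eps(r,\theta)|\to0 .
\]
Here the uniformity of $\eps$ in $\theta$ and the boundedness of $g$ justify the integration. Then
\[
F(R)=C_1+\int_{r_0}^R r^{N-1+\rho}\widetilde L(r)\bigl(M_g+\eta(r)\bigr)\,dr,
\qquad C_1:=\int_{|x|\le r_0}\mathbf{w}\,dx<\infty,
\]
by local integrability. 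Each case is then a Karamata-type integration of this expression.

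For (i), set $V(r):=r^{\gamma-1}\widetilde L(r)\in\RV_{\gamma-1}$ with $\gamma-1>-1$. Theorem~\ref{thm:karamataint} gives $\int_{r_0}^R V\sim R^\gamma\widetilde L(R)/\gamma$, and this tends to $\infty$ by Lemma~\ref{lem:SVbounds}. The error term is handled by splitting: given $\delta>0$, choose $r_\delta$ with $|\eta|\le\delta$ beyond $r_\delta$. Then
\[
\Bigl|\int_{r_0}^R V\eta\Bigr|\le C_\delta+\delta\int_{r_0}^R V ,
\]
which is $o\bigl(\int_{r_0}^R V\bigr)$. The constant $C_1$ is negligible for the same reason. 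Hence $F(R)\sim \frac{M_g}{\gamma}R^\gamma\widetilde L(R)$, which is in $\RV_\gamma$ since asymptotic equivalence preserves regular variation. The blow-up assertions then follow directly from Theorem~\ref{thm:RVblowup}(i)(b) and (ii), with slowly varying factor $\frac{M_g}{\gamma}\widetilde L$.

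For (ii), the integrand is $\widetilde L(r)(M_g+\eta(r))/r$. Writing $F(R)=C_1+M_g\widehat\ell(R)-M_g\widehat\ell(r_0)+\int_{r_0}^R\widetilde L\,\eta\,dr/r$, the same $\delta$-splitting gives $\int\widetilde L\,\eta\,dr/r=o(\widehat\ell(R))+O(1)$. This yields the stated form, with $C_0$ absorbing the bounded pieces. (If $\widehat\ell$ stays bounded, then $F$ converges by monotonicity and the representation holds trivially.)

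The step I expect to be the main obstacle is showing that $\widehat\ell$ is slowly varying. I would use the standard fact \cite[Prop.~1.5.9a]{BGT} that $\widehat\ell(R)/\widetilde L(R)\to\infty$. A direct argument is also available: for $x>1$,
\[
\widehat\ell(Rx)-\widehat\ell(R)=\int_1^x\widetilde L(Rs)\,\frac{ds}{s}\sim \widetilde L(R)\log x
\]
by the uniform convergence in Theorem~\ref{thm:potter}. So it suffices to prove $\widetilde L(R)=o(\widehat\ell(R))$. To get this, bound $\widehat\ell(R)\ge\int_{R/K}^R\widetilde L\,dr/r\approx \widetilde L(R)\log K$ (again by uniform convergence on $[1/K,1]$), and let $K\to\infty$. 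With $\widehat\ell\in\RV_0$ and $\widehat\ell\to\infty$, we get $F\sim M_g\widehat\ell$ with $L=M_g\widehat\ell\to\infty$, and Theorem~\ref{thm:RVblowup}(i)(b) and (ii) with $\gamma=0$ give the closed range $1<p\le N/(N-2)$.

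For (iii), note that $g\ge0$ and $\int g=0$, so $g=0$ a.e. The spherical integral is then $r^{N-1+\rho}\widetilde L\,\eta$-type, i.e.\ $o(r^{\rho}\widetilde L(r))$ after dividing by $r^{N-1}$. Integrating with the same $\delta$-splitting gives $F(R)\le C_\delta+\delta\int_{r_0}^R r^{N-1+\rho}\widetilde L\,dr$. By Karamata (case $\rho>-N$) or by the definition of $\widehat\ell$ (case $\rho=-N$), this is $O(1)+o(R^{N+\rho}\widetilde L(R))$, respectively $O(1)+o(\widehat\ell(R))$.
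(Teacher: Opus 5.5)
Your proposal is correct and follows essentially the same route as the paper: you integrate the profile over the sphere using the uniformity of $\eps$, apply Karamata integration in (i), integrate $\widetilde L(r)/r$ directly and use slow variation of $\widehat\ell$ in (ii), and use $g=0$ a.e.\ in (iii), with the $o(1)$ error terms handled more explicitly by your $\delta$-splitting and a self-contained proof (via $\widetilde L=o(\widehat\ell)$) of the slow-variation fact that the paper only cites from Bingham--Goldie--Teugels. One correction to your parenthetical remark: when $\widehat\ell$ stays bounded, the representation in (ii) forces $C_0=\lim_{R\to\infty}F(R)-M_g\lim_{R\to\infty}\widehat\ell(R)$, which can be negative (for $\mathbf{w}(x)=|x|^{-N}\widetilde L(|x|)\mathbf 1_{\{|x|\ge2\}}$ with $\widetilde L(r)=(\log(e+r))^{-2}$ one gets $C_0=-M_g\widehat\ell(2)<0$), so the representation does not hold ``trivially'' with $C_0\ge0$; this is a defect of the statement in that case, and the paper's proof does not address it either.
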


\begin{proof} The uniformity in \eqref{eq:profile} allows integration over the (compact) sphere and we have
\[
\int_{\sphere^{N-1}}\mathbf{w}(r\theta)\,d\sigma(\theta)
=r^\rho \widetilde L(r)\bigl(M_g+o(1)\bigr)
\qquad (r\to\infty).
\]

(i) If \(\rho\in(-N,0)\), then with \(\gamma=N+\rho\in(0,N)\),
\[
r^{N-1}\int_{\sphere^{N-1}}\mathbf{w}(r\theta)\,d\sigma(\theta)
=r^{\gamma-1}\widetilde L(r)\bigl(M_g+o(1)\bigr).
\]
Since \(\gamma-1>-1\), Karamata's integration theorem (Theorem~\ref{thm:karamataint}) yields
\[
F(R)\sim \frac{M_g}{\gamma}R^\gamma \widetilde L(R).
\]

(ii) If \(\rho=-N\), then
\[
r^{N-1}\int_{\sphere^{N-1}}\mathbf{w}(r\theta)\,d\sigma(\theta)
=\frac{\widetilde L(r)}{r}\bigl(M_g+o(1)\bigr).
\]
Hence
\[
F(R)
=
C_0+M_g\int_1^R \widetilde L(r)\,\frac{dr}{r}
+\int_1^R \widetilde L(r)\,\frac{o(1)}{r}\,dr,
\]
where \(C_0:=\int_{|x|\le1}\mathbf{w}(x)\,dx\). Writing
\[
\widehat\ell(R):=\int_1^R \widetilde L(r)\,\frac{dr}{r},
\]
we obtain
\[
F(R)=C_0+M_g\widehat\ell(R)(1+o(1))
\]
whenever \(\widehat\ell(R)\to\infty\); more generally the same identity
holds with the final \(o(1)\)-term understood relative to \(\widehat\ell(R)\).
The slow variation of \(\widehat\ell\) follows from \cite[Prop.~1.5.9a]{BGT}.
If \(\widehat\ell(R)\to\infty\), then \(C_0=o(\widehat\ell(R))\), so
\[
F(R)\sim M_g\widehat\ell(R).
\]

(iii) If \(M_g=0\), then \(g=0\) a.e. since \(g\ge0\), and therefore
\[
\int_{\sphere^{N-1}}\mathbf{w}(r\theta)\,d\sigma(\theta)
=o\bigl(r^\rho \widetilde L(r)\bigr).
\]
If \(\rho>-N\), Karamata gives
\[
F(R)=o\bigl(R^{N+\rho}\widetilde L(R)\bigr)+O(1).
\]
If \(\rho=-N\), then instead
\[
F(R)=o\!\left(\int_1^R \widetilde L(r)\,\frac{dr}{r}\right)+O(1)
=o\bigl(\widehat\ell(R)\bigr)+O(1).
\]
The final claim is immediate.
%(i) Then $r\mapsto r^{N-1}\int_{\sphere^{N-1}}w(r\theta)\,d\sigma$ belongs
%to $\RV_{N-1+\rho}$ with $N-1+\rho>-1$, and Karamata integration
%(Theorem~\ref{thm:karamataint}) gives the claim; the contribution of any
%compact region near the origin is $O(1)$, of lower order since
%$R^\gamma\widetilde L(R)\to\infty$ by Lemma~\ref{lem:SVbounds}.
%(ii) Here
%$F(R)=M_g\int_1^R \widetilde L(r)\tfrac{dr}{r}(1+o(1))+O(1)
%= M_g\widehat\ell(R)(1+o(1))+O(1)$, by the uniform convergence theorem
%applied on dyadic blocks; slow variation of $\widehat\ell$ is the de~Haan
%integral lemma \cite[Prop.~1.5.9a]{BGT}. If $\widehat\ell\to\infty$ the
%$O(1)$ term is negligible and Theorem~\ref{thm:RVblowup}\,(ii) applies
%with $\gamma=0$.
%(iii) Immediate.
\end{proof}

\begin{remark}\label{rem:profilesign}
\rm
For nonnegative densities $w$, case (iii) is necessarily degenerate, as
stated. The angular-cancellation phenomenon ($M_g=0$ with
$g\not\equiv0$) can occur only for sign-changing densities. In that
situation, the Tauberian framework of Theorem~\ref{thm:Tauberblow} below,
based on the {\it net} mass, provides the relevant blow-up criteria.
\end{remark}

Finally, we record the strictly anisotropic case of densities depending
only on a strict subset of the coordinates; it is a degenerate limit of
the operator framework of Section~\ref{sec:operator}.

\begin{theorem}\label{thm:anisotropic}
Let $1\le k\le N-1$, write $x=(y,z)\in\R^{k}\times\R^{N-k}$, and let
$\mathbf{w}(x)=\widetilde{\mathbf{w}}(y)$ with $0\le \widetilde{\mathbf{w}}\in L^1(\R^{k})$,
$\widetilde{\mathbf{w}}\not\equiv 0$. Then
\[
F(R)\sim \omega_{N-k}\,\|\widetilde {\mathbf{w}}\|_{L^1(\R^{k})}\,R^{\,N-k},
\]
so $F\in\RV_{N-k}$. Consequently \eqref{main} admits no global weak
solution for $1<p<p_F(N-k)$; in particular, if $k\le 2$ then
$N-k\ge N-2$ and blow-up holds for \emph{every} $p>1$.
\end{theorem}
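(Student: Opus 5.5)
The plan is to compute the asymptotics of $F(R)$ by squeezing the Euclidean ball $\{|x|\le R\}$ between product sets adapted to the splitting $x=(y,z)$. Then Theorem~\ref{thm:RVblowup}\,(i)(b) gives the blow-up conclusion. Write $M:=\|\widetilde{\mathbf{w}}\|_{L^1(\R^k)}>0$.

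\emph{Upper bound.} Since $|y|\le|x|$ and $|z|\le|x|$, the ball $B_R$ is contained in $\{|y|\le R\}\times\{|z|\le R\}$. Tonelli's theorem and $\widetilde{\mathbf{w}}\ge0$ then give
\[
F(R)\le \omega_{N-k}R^{N-k}\int_{|y|\le R}\widetilde{\mathbf{w}}\,dy\le \omega_{N-k}\,M\,R^{N-k}.
\]

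\emph{Lower bound.} Fix $\eta\in(0,1)$. The ball $B_R$ contains the product $\{|y|\le \eta R\}\times\{|z|\le \sqrt{1-\eta^2}\,R\}$, because $|x|^2=|y|^2+|z|^2\le R^2$ there. Hence
\[
F(R)\ge \omega_{N-k}(1-\eta^2)^{(N-k)/2}R^{N-k}\int_{|y|\le\eta R}\widetilde{\mathbf{w}}\,dy .
\]
By monotone convergence the last integral tends to $M$ as $R\to\infty$. So $\liminf_{R\to\infty} F(R)/(\omega_{N-k}MR^{N-k})\ge(1-\eta^2)^{(N-k)/2}$, and letting $\eta\to0$ together with the upper bound yields $F(R)\sim\omega_{N-k}MR^{N-k}$. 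A function asymptotic to a positive multiple of $R^{N-k}$ is in $\RV_{N-k}$, with slowly varying factor tending to $\omega_{N-k}M$. Moreover $\gamma:=N-k\in[1,N-1]\subset[0,N)$, and $F$ is eventually positive.

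\emph{Conclusion.} Theorem~\ref{thm:RVblowup}\,(i)(b) now gives nonexistence for $1<p<p_F(N-k)$. Hypothesis (a) also holds here, so either route works. If $k\le2$, then $\gamma=N-k\ge N-2$, so $p_F=\infty$ and nonexistence holds for every $p>1$, as in Remark~\ref{rem:lowdim}.

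There is no real obstacle. The only point needing care is the lower bound: the ball is not a product set, so $\eta$ must be sent to $0$ after $R\to\infty$ in order to recover the sharp constant $\omega_{N-k}$.
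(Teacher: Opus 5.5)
Your proof is correct and follows essentially the paper's route: you compute $F(R)$ by Tonelli in the splitting $x=(y,z)$, pass to the limit $R\to\infty$, and then invoke Theorem~\ref{thm:RVblowup}. The only difference is cosmetic: the paper writes the exact slice formula $F(R)=\omega_{N-k}\int_{\R^k}\widetilde{\mathbf{w}}(y)(R^2-|y|^2)_+^{(N-k)/2}\,dy$ and applies dominated convergence to $R^{-(N-k)}F(R)$ directly, which avoids your auxiliary parameter $\eta$, whereas your product-set sandwich reaches the same limit via monotone convergence.
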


\begin{proof}
Writing \(x=(y,z)\in\R^k\times\R^{N-k}\), we have
\[
F(R)=\int_{|x|\le R}\mathbf{w}(x)\,dx
=\int_{\substack{|y|^2+|z|^2\le R^2}}\widetilde{\mathbf{w}}(y)\,dy\,dz.
\]
By Fubini's theorem,
\[
F(R)
=\int_{|y|\le R}\widetilde{\mathbf{w}}(y)
\left(\int_{|z|\le (R^2-|y|^2)^{1/2}}dz\right)dy
=\omega_{N-k}\int_{|y|\le R}\widetilde{\mathbf{w}}(y)\,(R^2-|y|^2)^{\frac{N-k}{2}}\,dy.
\]
Hence
\[
\frac{F(R)}{R^{N-k}}
=\omega_{N-k}\int_{\R^k}\widetilde{\mathbf{w}}(y)
\Bigl(1-\frac{|y|^2}{R^2}\Bigr)_+^{\frac{N-k}{2}}\,dy.
\]
Since
\[
0\le \Bigl(1-\frac{|y|^2}{R^2}\Bigr)_+^{\frac{N-k}{2}}\le 1
\]
and the factor converges pointwise to \(1\) as \(R\to\infty\), the
dominated convergence theorem gives
\[
\frac{F(R)}{R^{N-k}}
\to \omega_{N-k}\int_{\R^k}\widetilde{\mathbf{w}}(y)\,dy
=\omega_{N-k}\,\|\widetilde{\mathbf{w}}\|_{L^1(\R^k)}.
\]
Therefore
\[
F(R)\sim \omega_{N-k}\,\|\widetilde w\|_{L^1(\R^k)}\,R^{N-k},
\]
and in particular \(F\in\RV_{N-k}\). The blow-up claims then follow from
Theorem~\ref{thm:RVblowup}.
\end{proof}

%==================================================================
\section{Space--time, pointwise and sign-changing extensions}\label{sec:ext}

In this section, we discuss extensions of the preceding blow-up
criteria. We first allow space--time dependent forcings, then discuss
pointwise lower bounds implying the required mass growth, and finally
treat forcings that may change sign. These variants show that our method
extends beyond the static nonnegative setting.

\subsection{Space-time forcings} We begin this section by considering the following Cauchy problem:
\begin{equation}\label{eq:mainST}
\partial_t u-\Delta u=|u|^p+\mathbf{w}(x,t),
\qquad (t,x)\in(0,\infty)\times\R^N,\qquad u(0,\cdot)=u_0,
\end{equation}
where $\mathbf{w}\in L^1_{\mathrm{loc}}([0,\infty)\times\R^N).$ The weak solutions to \eqref{eq:mainST} are
understood exactly as in Definition~\ref{def:weak} with $\mathbf{w}:=\mathbf{w}(x,t) \in L^1_{\text{loc}}([0, \infty) \times \mathbb{R}^n)$ instead of time-independent $\mathbf{w} \in L^1_{\text{loc}}(\mathbb{R}^n)$.

\begin{theorem}\label{thm:spacetime}
Let $N\ge 1$, $m>-1$, $\gamma\ge 0$, and let $\mathbf{w}\ge 0$ satisfy
\begin{equation}\label{eq:STmass}
\int_{|x|\le R}\mathbf{w}(x,t)\,dx\ \ge\ C_0\,t^{m}R^{\gamma}
\qquad\text{for all } t\ge 1,\ R\ge 1,
\end{equation}
for some $C_0>0$. Then \eqref{eq:mainST} admits no global weak solution
whenever
\[
1<p<p_F(\gamma,m):=
\begin{cases}
\dfrac{N-\gamma-2m}{\,N-\gamma-2m-2\,}, & N-\gamma-2m>2,\\[6pt]
\infty, & N-\gamma-2m\le 2 .
\end{cases}
\]
\end{theorem}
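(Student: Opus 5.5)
We follow the proof of Theorem~\ref{thm:RVblowup} verbatim: the test function and the upper bound never used the time-independence of $\mathbf{w}$, so only the lower bound needs to change. Suppose, for contradiction, that $u$ is a global weak solution of \eqref{eq:mainST}. We use the same test function
\[
\psi_T(t,x)=f(t/T)^{p'}\,g\bigl(|x|^2/T\bigr)^{2p'}
\]
as in Step~1 of that proof. It is admissible, and since $f(0)=0$ the initial-data term vanishes. Steps~1--2 use only the structure of $\psi_T$, the Young inequality and the pointwise bounds \eqref{eq:pointwise1}--\eqref{eq:pointwise2}. Hence they give, exactly as in \eqref{eq:UB},
\[
\frac12\int_0^T\!\!\int|u|^p\psi_T+\int_0^T\!\!\int \mathbf{w}(x,t)\,\psi_T\,dx\,dt\le C\,T^{1+\frac N2-p'}.
\]

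For the lower bound we use $\mathbf{w}\ge0$, $f_T\equiv1$ on $[T/2,2T/3]$ and $g_T\equiv1$ on $\{|x|\le\sqrt T\}$. For $T\ge2$ we have $t\ge T/2\ge1$ and $\sqrt T\ge1$, so hypothesis \eqref{eq:STmass} applies and gives
\[
\int_0^T\!\!\int\mathbf{w}\,\psi_T\ge\int_{T/2}^{2T/3}C_0\,t^m\,T^{\gamma/2}\,dt\ \ge\ c\,T^{1+m+\gamma/2}.
\]
Here $\int_{T/2}^{2T/3}t^m\,dt=c_m T^{m+1}$ with $c_m>0$, which holds for every real $m$; the hypothesis $m>-1$ is not even needed at this point. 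Combining the two bounds yields
\[
c\le C\,T^{\beta},\qquad \beta:=\frac{N-\gamma-2m}{2}-p',\qquad T\ge 2.
\]

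It remains to check that $\beta<0$ exactly in the stated range. If $N-\gamma-2m\le2$, then $\beta\le 1-p'<0$ for every $p>1$, since $p'>1$. If $N-\gamma-2m>2$, then $\beta<0$ is equivalent to $p'>\frac{N-\gamma-2m}{2}$. Using $p'=p/(p-1)$ and cross-multiplying, this becomes $p<\frac{N-\gamma-2m}{N-\gamma-2m-2}=p_F(\gamma,m)$. In both cases letting $T\to\infty$ gives the contradiction $c\le0$.

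There is no real obstacle here. The only point requiring care is that the lower bound uses \eqref{eq:STmass} only for $t\ge1$ and $R\ge1$, which is guaranteed by taking $T\ge2$. One should also check that the time integral of $t^m$ over $[T/2,2T/3]$ genuinely scales like $T^{m+1}$ even when $m<0$; this holds because the interval stays away from $t=0$.
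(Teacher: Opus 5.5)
Your proof is correct and matches the paper's argument essentially line by line. You use the same test function $\psi_T$ and the unchanged upper bound \eqref{eq:UB}, get the lower bound $c\,T^{m+1+\gamma/2}$ from \eqref{eq:STmass} with $R=\sqrt T$ and $T\ge 2$, and compare exponents. Your side remark that $m>-1$ is not needed for $\int_{T/2}^{2T/3}t^m\,dt\asymp T^{m+1}$ is also correct, since the interval stays away from $t=0$.
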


\begin{proof} We assume that \eqref{eq:mainST} has a global weak solution. Proceeding like the proof of Theorem~\ref{thm:RVblowup} and using the test function $\psi_T$ of Theorem~\ref{thm:RVblowup}, we deduce that the upper
bound \eqref{eq:UB} remains same, that is, 
$$\int\!\!\int \mathbf{w}\,\psi_T\le C\,T^{1+\frac N2-p'}.$$
For the lower bound, using $\psi_T\equiv 1$ on
$[T/2,2T/3]\times\{|x|\le\sqrt T\}$ and \eqref{eq:STmass} with
$R=\sqrt T$,
\[
\int_0^T\!\!\!\int_{\R^N}\mathbf{w}\,\psi_T\,dx\,dt
\ \ge\ \int_{T/2}^{2T/3} C_0\,t^{m}\,T^{\gamma/2}\,dt
\ \ge\ c\,T^{\,m+1+\frac{\gamma}{2}}
\qquad (T\ge 2),
\]
since $t^m\asymp T^m$ on $[T/2,2T/3]$ (here $m>-1$ is used only to make
the constant positive and uniform). Comparing exponents,
$m+1+\frac{\gamma}{2}> 1+\frac N2-p'$ holds precisely when
$p'>\frac{N-\gamma-2m}{2}$, i.e.\ when $1<p<p_F(\gamma,m)$, and then the
two bounds contradict each other for large $T$. This concludes the proof.
\end{proof}

\begin{remark}\label{rem:STgamma}\rm 
No upper bound on $\gamma$ is required. Spatially growing forcings, or
time-dependent forcings with $m>0$, only enlarge the blow-up range.
Indeed, if $N-\gamma-2m\le 2,$ then the critical exponent is infinite and nonexistence holds for every
$p>1$. A critical-case refinement analogous to
Theorem~\ref{thm:RVblowup}(ii), based on a uniformly regularly varying
version of \eqref{eq:STmass}, is contained in the $\Phi$-framework of
Section~\ref{sec:nonlocal}; see Proposition~\ref{prop:PhiRV} and
Theorem~\ref{thm:Phidich}.
\end{remark}

We now present the following particular case, in which we consider the pointwise lower bound for the forcing term.
\begin{theorem}\label{thm:weighted}
Let $m>-1$, $\beta<N.$ Suppose that 
\[
\mathbf{w}(x,t)\ \ge\ C\,t^{m}\,(1+|x|)^{-\beta}
\qquad\text{on } \R^N\times[1,\infty).
\]
Then 
\eqref{eq:mainST} admits no global weak solution for
$1<p<p_F(N-\beta,\,m)$.
\end{theorem}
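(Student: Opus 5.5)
The plan is to reduce Theorem~\ref{thm:weighted} to Theorem~\ref{thm:spacetime} with $\gamma:=N-\beta>0$. To do so we verify the mass bound \eqref{eq:STmass}: there should be a constant $C_0>0$ such that
\[
\int_{|x|\le R}\mathbf{w}(x,t)\,dx\ \ge\ C_0\,t^{m}R^{N-\beta}
\qquad\text{for all } t\ge 1,\ R\ge 1 .
\]
Integrating the pointwise lower bound over the ball gives
\[
\int_{|x|\le R}\mathbf{w}(x,t)\,dx\ \ge\ C\,t^m\int_{|x|\le R}(1+|x|)^{-\beta}\,dx ,
\]
and it remains to show that this last integral is bounded below by $c\,R^{N-\beta}$ uniformly for $R\ge1$.

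\emph{Case $\beta\ge0$.} Restrict the integral to the annulus $R/2\le|x|\le R$. There $(1+|x|)^{-\beta}\ge (2R)^{-\beta}$, because $1+|x|\le 2R$ when $R\ge1$. The annulus has volume $\omega_N(1-2^{-N})R^N$, so we obtain $c\,R^{N-\beta}$.

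\emph{Case $\beta<0$.} On the same annulus, $(1+|x|)^{-\beta}=(1+|x|)^{|\beta|}\ge (R/2)^{|\beta|}$, which again gives $c\,R^{N-\beta}$.

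In both cases, $N-\beta>0$ together with $R\ge1$ makes the constant uniform. Hence \eqref{eq:STmass} holds with $\gamma=N-\beta\ge0$; Theorem~\ref{thm:spacetime} allows any $\gamma\ge0$, so no upper bound on $\gamma$ is needed. Theorem~\ref{thm:spacetime} then gives nonexistence for $1<p<p_F(N-\beta,m)$.

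The only step needing care is the uniform lower bound on $\int_{|x|\le R}(1+|x|)^{-\beta}dx$ across all signs of $\beta$. The annulus restriction handles this without invoking Karamata integration. One minor point: the hypothesis is stated on $\R^N\times[1,\infty)$, whereas the proof of Theorem~\ref{thm:spacetime} only uses $t\in[T/2,2T/3]$ with $T\ge2$. Therefore the bound for $t\ge1$ is exactly what is needed.
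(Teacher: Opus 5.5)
Your proof is correct and follows the paper's argument: you restrict $\int_{|x|\le R}(1+|x|)^{-\beta}\,dx$ to the annulus $R/2\le|x|\le R$ to obtain $c\,t^mR^{N-\beta}$ for $R\ge1$, which is \eqref{eq:STmass} with $\gamma=N-\beta>0$, and then invoke Theorem~\ref{thm:spacetime}. Your split into the cases $\beta\ge0$ and $\beta<0$ only spells out the elementary pointwise bound on the annulus that the paper leaves implicit.
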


\begin{proof} Note that
$$\int_{|x| \leq R} \mathbf{w}(x, t) dx \,dt \geq Ct^m \int_{|x|\le R}(1+|x|)^{-\beta}dx\ge Ct^m \int_{R/2 \leq |x|\le R}(1+|x|)^{-\beta}dx\ge  c\,t^m R^{N-\beta}$$ for $R\ge1$ when
$\beta<N.$  Consequently, \eqref{eq:STmass} holds with $\gamma=N-\beta>0$, and applying Theorem~\ref{thm:spacetime} the result follows.  
\end{proof}

\begin{remark}\label{rem:weightint}\rm 
\leavevmode
\begin{enumerate}[label=\textnormal{(\roman*)},leftmargin=*]
\item  For $\beta>N$ the weight is integrable in space and
\eqref{eq:STmass} holds with $\gamma=0$; the threshold becomes
$p_F(0,m)=\frac{N-2m}{N-2m-2}$ when $N-2m>2$ and $\infty$ otherwise.
\item Negative values of $\beta$ (forcings growing at infinity) are allowed
and give $\gamma=N-\beta>N$, hence nonexistence for every $p>1$ as soon as
$N-\gamma-2m=\beta-2m\le 2$.
\end{enumerate}
\end{remark}

\subsection{Sign-changing forcings: The net-mass criterion}

We now return to time-independent forcings and drop the positivity condition on the sign of $w$. For
$\mathbf{w}\in L^1_{\mathrm{loc}}(\R^N)$ (possibly sign-changing) the \emph{net mass} is defined by
$$F(R)=\int_{|x|\le R}\mathbf{w}\,dx$$ is a continuous function of $R$ as a consequence of dominated
convergence using the fact that spheres are Lebesgue-null.

We present the following result, which, in particular, extends the seminal result of Bandle, Levine, and
Zhang \cite{BandleLevineZhang2000}.

\begin{theorem}\label{cor:sign}
Let $\mathbf{w}\in L^1_{\mathrm{loc}}(\R^N)$ be possibly sign-changing, and let
$\gamma\ge 0$.
\begin{enumerate}[label=\textnormal{(\roman*)},leftmargin=*]
\item If $\displaystyle\liminf_{R\to\infty}R^{-\gamma}F(R)>0$, or if $F$
is eventually positive with $F\in\RV_\gamma$, then \eqref{main} admits no
global weak solution for $1<p<p_F(\gamma)$.
\item Moreover, if  $F\in\RV_\gamma$ with $\gamma\in[0,N-2)$ and slowly
varying factor $L(R)\to\infty$, then nonexistence of global solutions holds also at
$p=p_F(\gamma)$.
\end{enumerate}
In particular, taking $\gamma=0$ in (i): if $\mathbf{w}\in L^1(\R^N)$ with
$\int_{\R^N}\mathbf{w}\,dx>0$, no global weak solution exists for
$1<p<\frac{N}{N-2}$ $(N\ge3).$
\end{theorem}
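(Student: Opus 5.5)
The plan is to rerun the test-function argument of Theorem~\ref{thm:RVblowup} verbatim and change only the step where $\mathbf{w}\ge 0$ was used, namely the lower bound \eqref{eq3.4}.

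\emph{Upper bound.} Steps 1--2 never use the sign of $\mathbf{w}$; they only involve $u\ge0$, Young's inequality and the pointwise bounds \eqref{eq:pointwise1}--\eqref{eq:pointwise2}. The integral $\int\!\!\int \mathbf{w}\,\psi_T$ is finite because $\mathbf{w}\in L^1_{\mathrm{loc}}$ and $\psi_T$ is bounded with compact support. Hence \eqref{eq:UB} holds unchanged, and in particular
\[
\int_0^T\!\!\int_{\R^N}\mathbf{w}\,\psi_T\,dx\,dt\le C\,T^{1+\frac N2-p'}.
\]
Here the left-hand side may a priori be negative, which is harmless.

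\emph{Lower bound (the main obstacle).} We can no longer bound $\int\mathbf{w}\,\psi_T$ from below by restricting to the set where $\psi_T\equiv1$. Instead I will express the spatial integral through the net mass $F$, using the radial structure of $g_T(x)=G(|x|^2/T)$ with $G=g^{2p'}$ nonincreasing. In polar coordinates, $F(r)=\int_0^r s^{N-1}\int_{\sphere^{N-1}}\mathbf{w}(s\theta)\,d\sigma\,ds$ is absolutely continuous in $r$ with $F(0)=0$. Since $G(r^2/T)$ vanishes for $r\ge\sqrt{2T}$, integration by parts gives
\[
\int_{\R^N}\mathbf{w}(x)\,g_T(x)\,dx=\int_{\sqrt T}^{\sqrt{2T}}F(r)\,\frac{2r}{T}\,\bigl|G'(r^2/T)\bigr|\,dr .
\]
The weight $\frac{2r}{T}|G'(r^2/T)|$ is nonnegative, supported in $[\sqrt T,\sqrt{2T}]$, and has total integral $G(1)-G(2)=1$. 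Therefore
\[
\int_{\R^N}\mathbf{w}\,g_T\,dx\ \ge\ \min_{\sqrt T\le r\le\sqrt{2T}}F(r)
\]
as soon as $F>0$ on that interval, which holds for large $T$ under either hypothesis.
\begin{itemize}
\item Under the $\liminf$ hypothesis, this minimum is at least $c\,T^{\gamma/2}$.
\item Under $F\in\RV_\gamma$, the uniform convergence theorem (Theorem~\ref{thm:potter}) on the compact set $[1,\sqrt2]$ gives $F(r)\ge\tfrac12 F(\sqrt T)=\tfrac12\,T^{\gamma/2}L(\sqrt T)$ for $r\in[\sqrt T,\sqrt{2T}]$ and $T$ large.
\end{itemize}
Since $f_T\ge0$ and $f_T\equiv1$ on $[T/2,3T/4]$, the time integral contributes at least a factor $cT$. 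Altogether,
\[
\int_0^T\!\!\int_{\R^N}\mathbf{w}\,\psi_T\,dx\,dt\ \ge\ c\,T\,F(\sqrt T)\quad\text{or}\quad \ge\ c\,T^{1+\gamma/2},
\]
according to the case.

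\emph{Conclusion.} Combining the two bounds yields exactly \eqref{eq:master}, namely $F(\sqrt T)\le C\,T^{N/2-p'}$, or its $\liminf$ analogue. Step 4 of the proof of Theorem~\ref{thm:RVblowup} then applies word for word.
\begin{itemize}
\item For (i), with $\alpha<0$: use the $\liminf$ bound directly, or the lower bound $L(\sqrt T)\ge T^{-\delta/2}$ from Lemma~\ref{lem:SVbounds}.
\item For (ii), with $\alpha=0$: we obtain $L(\sqrt T)\le C$, which contradicts $L\to\infty$.
\end{itemize}
For the final assertion: if $\mathbf{w}\in L^1(\R^N)$ with $\int\mathbf{w}>0$, then dominated convergence gives $F(R)\to\int\mathbf{w}\,dx>0$. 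Hence the $\liminf$ hypothesis holds with $\gamma=0$, and (i) yields nonexistence for $1<p<p_F(0)=N/(N-2)$ when $N\ge3$.
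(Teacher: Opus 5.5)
Your proposal is correct and follows essentially the same route as the paper. The upper bound is reused without any sign condition. Your radial integration-by-parts identity $\int_{\R^N}\mathbf{w}\,g_T\,dx=\int_{\sqrt T}^{\sqrt{2T}}F(r)\,\frac{2r}{T}\,|G'(r^2/T)|\,dr$ is exactly the paper's layer-cake formula \eqref{eq:layercake} written as a Stieltjes integral, since the distribution of $\rho(\lambda,T)$ under $d\lambda$ on $(0,1)$ is precisely your weight; after that, the $\liminf$ and uniform-convergence lower bounds and the Step~4 conclusion coincide with the paper's.
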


\begin{proof} Assume that \eqref{main} admits a global weak solution. Arguing as in the
proof of Theorem~\ref{thm:RVblowup}, we note first that the upper estimate
does not require any sign condition on the forcing. Indeed, taking
\(\psi_T=f_T(t)g_T(x)\) in \eqref{eq:weakform}, performing the Young
absorption from Step~2 in the proof of Theorem~\ref{thm:RVblowup}, and
using only the nonnegativity of \(|u|^p\) and of the test function, we obtain
\[
\int_0^T\!\!\!\int_{\R^N}\mathbf{w}\,\psi_T\,dx\,dt
+\frac12\int_0^T\!\!\!\int_{\R^N}|u|^p\psi_T\,dx\,dt
\le C\,T^{\,1+\frac N2-p'}.
\]
Note that the term is nonnegative and can be discarded,
yielding
\begin{equation}\label{eq:UBsign}
\int_0^T\!\!\!\int_{\R^N}\mathbf{w}\,\psi_T\,dx\,dt
\le C\,T^{\,1+\frac N2-p'}.
\end{equation}
For the lower bound, we use the monotone radial structure of \(g_T\). Recall
that \(g_T(x)=G(|x|^2/T)\), where \(G:=g^{2p'}\) is continuous,
nonincreasing, \(G\equiv1\) on \([0,1]\), and \(G\equiv0\) on \([2,\infty)\).
For \(\lambda\in(0,1)\), the superlevel set
\[
\{x:\,g_T(x)>\lambda\}
=\{|x|<\rho(\lambda,T)\},
\qquad
\rho(\lambda,T):=\sqrt{T\,s(\lambda)},
\]
where
\[
s(\lambda):=\sup\{s\ge0:\,G(s)>\lambda\}\in[1,2].
\]
Since \(g_T\) is compactly supported and \(0\le g_T\le1\), one has
\(\mathbf{w}_\pm g_T\in L^1(\R^N)\). Applying the layer-cake formula separately to
\(\mathbf{w}_+\) and \(\mathbf{w}_-\), we obtain
\begin{equation}\label{eq:layercake}
\int_{\R^N}\mathbf{w}\,g_T\,dx
=\int_0^1\left(\int_{\{g_T>\lambda\}} \mathbf{w}\,dx\right)d\lambda
=\int_0^1 F\bigl(\rho(\lambda,T)\bigr)\,d\lambda ,
\end{equation}
where the last identity follows from the definition
\(F(R):=\int_{|x|\le R}\mathbf{w}(x)\,dx\) and the continuity of \(F\).

\smallskip
\noindent\emph{Case of the $\liminf$ hypothesis.} There are $c>0$,
$R_0\ge1$ with $F(R)\ge c\,R^{\gamma}$ for $R\ge R_0$. Since
$\rho(\lambda,T)\ge\sqrt T$ for every $\lambda$, \eqref{eq:layercake}
yields, for $T\ge R_0^2$,
\[
\int_{\R^N}\mathbf{w}\,g_T\,dx\ \ge\ c\,T^{\gamma/2}\ (>0).
\]

\noindent\emph{Case $F\in\RV_\gamma$.} The ratios
$\rho(\lambda,T)/\sqrt T=s(\lambda)^{1/2}\in[1,\sqrt2]$ range over a fixed
compact set, so by the uniform convergence theorem
(Theorem~\ref{thm:potter})
\[
\frac{F(\rho(\lambda,T))}{F(\sqrt T)}
\;\longrightarrow\; s(\lambda)^{\gamma/2}\ \ge\ 1
\qquad\text{uniformly in }\lambda\in(0,1),
\]
whence, for all large $T$,
\[
\int_{\R^N}\mathbf{w}\,g_T\,dx
\ \ge\ \tfrac12\,F\bigl(\sqrt T\bigr)
=\tfrac12\,T^{\gamma/2}L\bigl(\sqrt T\bigr)\ (>0).
\]

%In either case, multiplying by $\int_0^T f_T\,dt\ge T/6$ (the spatial
%factor being positive for large $T$),
%%\int_0^T\!\!\!\int_{\R^N}\mathbf{w}\,\psi_T\,dx\,dt
%\ \ge\ \frac{T}{6}\int_{\R^N}w\,g_T\,dx .
%\]
%Combining with \eqref{eq:UBsign} reproduces \eqref{eq:master}
%(respectively \eqref{eq:masterRV}) up to constants, and the conclusion of
%parts (i) and (ii) follows verbatim from Steps~4 of
%Theorem~\ref{thm:RVblowup}.

In either case, since \(\int_0^T f_T(t)\,dt\ge T/6\), we obtain
\[
\int_0^T\!\!\!\int_{\R^N}\mathbf{w}\,\psi_T\,dx\,dt
\ge \frac{T}{6}\int_{\R^N}\mathbf{w}\,g_T\,dx .
\]
Hence, under the \(\liminf\) hypothesis,
\[
c\,T^{1+\gamma/2}
\le \int_0^T\!\!\!\int_{\R^N}\mathbf{w}\,\psi_T\,dx\,dt
\le C\,T^{1+\frac N2-p'},
\]
that is,
\[
c\le C\,T^{\frac{N-\gamma}{2}-p'}.
\]
If \(p<p_F(\gamma)\), then \(\frac{N-\gamma}{2}-p'<0\), and letting
\(T\to\infty\) yields a contradiction.

Similarly, if \(F\in\RV_\gamma\), say \(F(R)=R^\gamma L(R)\), then
\[
c\,T^{1+\gamma/2}L(\sqrt T)
\le \int_0^T\!\!\!\int_{\R^N}\mathbf{w}\,\psi_T\,dx\,dt
\le C\,T^{1+\frac N2-p'},
\]
hence
\[
c\,L(\sqrt T)\le C\,T^{\frac{N-\gamma}{2}-p'}.
\]
This is impossible if \(p<p_F(\gamma)\); and when \(p=p_F(\gamma)\), it
is impossible provided \(L(R)\to\infty\). The conclusion follows.
\end{proof}

\begin{remark}\label{rem:signmech}\rm\,
The only structural ingredient beyond the nonnegative case is the
\emph{monotonicity} of the spatial cutoff, which allows one to rewrite the
smoothed mass \(\int_{\R^N}\mathbf{w}\,g_T\,dx\) as the layer-cake average
\eqref{eq:layercake} of the net mass \(F\) over radii comparable to
\(\sqrt T\). Thus, the argument depends only on the lower behavior of the
cumulative mass \(F\), and requires no separate control of the negative part
\(\mathbf{w}_-\) beyond local integrability.
\end{remark}

\subsection{A Tauberian criterion}

In applications, the net mass $F$ may be hard to estimate directly, while
the Gaussian-windowed mass
\[
\Lambda_{\mathbf{w}}(s):=\int_{\R^N}e^{-s|x|^2}\,\mathbf{w}(x)\,dx ,\qquad s>0,
\]
is computable by using Fourier analysis or special-function methods. The
following criterion converts asymptotics of $\Lambda_{\mathbf{w}}$ into the
hypothesis of Theorem~\ref{cor:sign}. We write $\mathbf{w}=\mathbf{w}_+-\mathbf{w}_-$ and
$F_{\pm}(R):=\int_{|x|\le R}{\mathbf{w}}_{\pm}\,dx$.

\begin{theorem}\label{thm:Tauberblow}
Let \(\mathbf{w}\in L^1_{\mathrm{loc}}(\R^N)\) be such that
\(\Lambda_{|\mathbf{w}|}(s)<\infty\) for every \(s>0\). Assume:
\begin{enumerate}[label=\textnormal{(\alph*)},leftmargin=*]
\item there exist \(\gamma\in(0,N)\) and \(\ell\in\RV_0\) such that
\[
\Lambda_\mathbf{w}(s)\sim s^{-\gamma/2}\,\ell(1/s)
\qquad (s\to0^+);
\]
\item the negative part has mass of strictly smaller order:
\[
F_-(R)=O(R^{\gamma'})
\qquad\text{for some }\gamma'\in[0,\gamma)
\]
(in particular, \(\mathbf{w}_-\in L^1(\R^N)\) suffices, with \(\gamma'=0\)).
\end{enumerate}
Then
\begin{equation}\label{eq:taubconc}
F(R)\sim \frac{R^\gamma\,\ell(R^2)}
{\Gamma\!\left(1+\tfrac\gamma2\right)}
\qquad (R\to\infty).
\end{equation}
Hence \(F\in\RV_\gamma\) with slowly varying factor
\[
L(R):=\frac{\ell(R^2)}{\Gamma(1+\gamma/2)}.
\]
In particular, Theorem~\textnormal{\ref{cor:sign}} applies: blow-up holds
for \(1<p<p_F(\gamma)\), and also at \(p=p_F(\gamma)\) if
\(\gamma<N-2\) and \(\ell(R)\to\infty\).
\end{theorem}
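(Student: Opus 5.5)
The plan is to reduce the statement to the Karamata Tauberian theorem (Theorem~\ref{thm:tauber}) by passing to the variable $t=|x|^2$. For a nonnegative locally integrable $v$, set $U_v(t):=\int_{|x|\le\sqrt t}v\,dx$. This function is non-decreasing and continuous. In polar coordinates and then Stieltjes form we get
\[
\Lambda_v(s)=\int_0^\infty e^{-st}\,dU_v(t).
\]
This is justified by Fubini/layer-cake, writing $e^{-s|x|^2}=\int_{|x|^2}^\infty s e^{-st}\,dt$. The identity is legitimate for $v=\mathbf{w}_\pm$ because $\Lambda_{|\mathbf{w}|}(s)<\infty$, so both $\Lambda_{\mathbf{w}_+}$ and $\Lambda_{\mathbf{w}_-}$ are finite. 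We then have $\Lambda_{\mathbf{w}}=\Lambda_{\mathbf{w}_+}-\Lambda_{\mathbf{w}_-}$.

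\emph{Step 1: the negative part is negligible on the transform side.} From (b), $U_{\mathbf{w}_-}(t)=F_-(\sqrt t)\le C(1+t^{\gamma'/2})$. Integrating by parts,
\[
\Lambda_{\mathbf{w}_-}(s)=s\int_0^\infty e^{-st}U_{\mathbf{w}_-}(t)\,dt\le C\bigl(1+\Gamma(1+\tfrac{\gamma'}{2})\,s^{-\gamma'/2}\bigr).
\]
By Lemma~\ref{lem:SVbounds}, $s^{-\gamma/2}\ell(1/s)\ge s^{-(\gamma+\gamma')/4}$ for small $s$, since $\gamma'<\gamma$. Hence $\Lambda_{\mathbf{w}_-}(s)=o\bigl(s^{-\gamma/2}\ell(1/s)\bigr)$, and therefore
\[
\Lambda_{\mathbf{w}_+}(s)\sim s^{-\gamma/2}\ell(1/s)\qquad (s\to0^+).
\]

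\emph{Step 2: Tauberian inversion for the positive part.} We apply Theorem~\ref{thm:tauber} to the non-decreasing $U=U_{\mathbf{w}_+}$ with $\rho=\gamma/2\ge0$ and slowly varying factor $\ell$. This gives
\[
U_{\mathbf{w}_+}(t)\sim \frac{t^{\gamma/2}\ell(t)}{\Gamma(1+\gamma/2)},\qquad\text{that is,}\qquad F_+(R)\sim \frac{R^\gamma\ell(R^2)}{\Gamma(1+\gamma/2)}.
\]

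\emph{Step 3: return to the net mass.} Since $F=F_+-F_-$ and $F_-(R)=O(R^{\gamma'})$, we need $F_-=o(F_+)$. This follows again from Lemma~\ref{lem:SVbounds}: the factor $\ell(R^2)$ is slowly varying in $R$, because $R\mapsto\ell(R^2)$ inherits slow variation, so $R^\gamma\ell(R^2)\ge R^{(\gamma+\gamma')/2}$ for large $R$. This yields \eqref{eq:taubconc}. Then $L(R)=\ell(R^2)/\Gamma(1+\gamma/2)$ is slowly varying, $F$ is eventually positive, and $F\in\RV_\gamma$. If $\ell\to\infty$, then $L\to\infty$. The blow-up conclusions follow directly from Theorem~\ref{cor:sign}(i)--(ii).

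The main obstacle is Step 1 together with the justification of the Stieltjes identity. The Tauberian theorem requires monotonicity, which the net mass $F$ lacks, so the argument must go through $\mathbf{w}_+$. This is why hypothesis (b) is needed: it forces the negative part to be of strictly lower order on both the transform side and the mass side. The remaining steps are routine bookkeeping with slowly varying functions.
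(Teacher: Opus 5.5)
Your proposal is correct and follows the paper's proof essentially step for step. You show $\Lambda_{\mathbf{w}_-}(s)=o\bigl(s^{-\gamma/2}\ell(1/s)\bigr)$ via (b) and Lemma~\ref{lem:SVbounds}, apply Karamata's Tauberian theorem to the non-decreasing function $U(t)=F_+(\sqrt t)$, and then absorb $F_-=O(R^{\gamma'})$ into $F_+$. The only differences are cosmetic: you bound $\Lambda_{\mathbf{w}_-}$ by one global integration by parts instead of splitting at radius $s^{-1/2}$, and you justify the Stieltjes identity explicitly where the paper leaves it implicit.
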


\begin{proof}
\emph{Step 1: The negative part is negligible for $\Lambda$.}
Splitting at radius $s^{-1/2}$ and using (b),
\begin{eqnarray*}
\Lambda_{\mathbf{w}_-}(s)
&=&\Bigl(\int_{|x|\le s^{-1/2}}+\int_{|x|>s^{-1/2}}\Bigr)e^{-s|x|^2}w_-\,dx
\\ &\le&\ F_-\bigl(s^{-1/2}\bigr)
+\int_{|x|>s^{-1/2}}e^{-s|x|^2}\mathbf{w}_-\,dx . 
\end{eqnarray*}
For the tail, layer-cake in the radial variable and integration by parts
give
\begin{eqnarray*}
\int_{|x|>s^{-1/2}}e^{-s|x|^2}\mathbf{w}_-\,dx
&=&\int_{s^{-1/2}}^{\infty}2sr\,e^{-sr^2}\bigl(F_-(r)-F_-(s^{-1/2})\bigr)\,dr
\\ &\le&\ C\int_{s^{-1/2}}^{\infty}2sr\,e^{-sr^2}r^{\gamma'}\,dr
\\ &\le&\ C' s^{-\gamma'/2},
\end{eqnarray*}
so $\Lambda_{w_-}(s)=O\bigl(s^{-\gamma'/2}\bigr)=o\bigl(s^{-\gamma/2}\ell(1/s)\bigr)$
by Lemma~\ref{lem:SVbounds}. Hence (a) transfers to the positive part:
\[
\Lambda_{{\mathbf{w}}_+}(s)=\Lambda_{\mathbf{w}}(s)+\Lambda_{{\mathbf{w}}_-}(s)
\ \sim\ s^{-\gamma/2}\,\ell(1/s)\qquad(s\to0^+).
\]

\emph{Step 2: Karamata Tauberian theorem.} Define the nondecreasing
function $U(u):=F_+\bigl(\sqrt u\bigr)$, $u\ge0$. Writing the Gaussian
window radially,
\[
\Lambda_{{\mathbf{w}}_+}(s)=\int_{\R^N}e^{-s|x|^2}{\mathbf{w}}_+(x)\,dx
=\int_0^\infty e^{-su}\,dU(u)=\widehat U(s),
\]
which is finite for all $s>0$ by hypothesis. Theorem~\ref{thm:tauber}
with index $\gamma/2$ gives
\[
U(u)\ \sim\ \frac{u^{\gamma/2}\,\ell(u)}{\Gamma(1+\tfrac\gamma2)}
\quad(u\to\infty),
\qquad\text{i.e.}\qquad
F_+(R)\ \sim\ \frac{R^{\gamma}\,\ell(R^2)}{\Gamma(1+\tfrac\gamma2)} .
\]
Note that $R\mapsto\ell(R^2)$ is again slowly varying.

\emph{Step 3: Back to the net mass.} By (b),
$F(R)=F_+(R)-F_-(R)=F_+(R)+O(R^{\gamma'})$, and
$R^{\gamma'}=o\bigl(R^{\gamma}\ell(R^2)\bigr)$ again by
Lemma~\ref{lem:SVbounds}; \eqref{eq:taubconc} follows. Thus, 
$F\in\RV_\gamma$, and therefore, eventually positivity of $F$, and the applicability of
Theorem~\ref{cor:sign}, conclude the desired conclusion.
\end{proof}

\begin{example}[A sign-changing forcing detected by its Laplace transform]
\label{ex:taubsign}
Let \(\gamma\in(0,N)\), \(A>0\), and
\[
\mathbf{w}(x)=|x|^{-(N-\gamma)}\,\mathbf 1_{\{|x|\ge1\}}-A\,e^{-|x|^{2}} .
\]
Then \(\mathbf{w}\) changes sign; indeed \(\mathbf{w}(x)<0\) on \(\{|x|<1\}\), while
\(\mathbf{w}(x)>0\) for all sufficiently large \(|x|\).

Moreover, one computes by substituting $u=sr^2$ that
\[
\int_{|x|\ge1}e^{-s|x|^2}|x|^{-(N-\gamma)}\,dx
=
|\sphere^{N-1}|\int_1^\infty e^{-sr^2}r^{\gamma-1}\,dr
=
\frac{|\sphere^{N-1}|}{2}\,s^{-\gamma/2}
\int_s^{\infty}e^{-u}u^{\frac\gamma2-1}\,du,
\]
so
\[
\int_{|x|\ge1}e^{-s|x|^2}|x|^{-(N-\gamma)}\,dx
\sim
\frac{|\sphere^{N-1}|\,\Gamma(\tfrac\gamma2)}{2}\,s^{-\gamma/2}
\qquad (s\to0^+).
\]
On the other hand,
\[
A\int_{\R^N}e^{-s|x|^2}e^{-|x|^2}\,dx
=
A\Bigl(\frac{\pi}{1+s}\Bigr)^{N/2}
=O(1).
\]
Hence hypothesis \textnormal{(a)} of Theorem~\ref{thm:Tauberblow} holds with
\[
\ell\equiv \frac{|\sphere^{N-1}|\,\Gamma(\gamma/2)}{2}.
\]
Since \(\mathbf{w}_-(x)\le A e^{-|x|^2}\), one has \(\mathbf{w}_-\in L^1(\R^N)\), so
\textnormal{(b)} holds with \(\gamma'=0\). Therefore
Theorem~\ref{thm:Tauberblow} yields
\[
F(R)\sim
\frac{|\sphere^{N-1}|\,\Gamma(\tfrac\gamma2)}
{2\,\Gamma(1+\tfrac\gamma2)}\,R^\gamma
=
\frac{|\sphere^{N-1}|}{\gamma}\,R^\gamma.
\]
This agrees with the direct computation in Example~\ref{ex:power}, since the
Gaussian term contributes only \(O(1)\) to the ball mass. In particular,
blow-up occurs for \(1<p<p_F(\gamma)\).
\end{example}

\begin{remark}\label{rem:tauberneed}
\rm\,Some additional hypothesis of the type in \textnormal{(b)} is indispensable.
Indeed, Tauberian inversion from the Gaussian-windowed mass \(\Lambda_\mathbf{w}\) to the
ball mass \(F\) requires either monotonicity or a one-sided Tauberian
condition. Without any control on the negative part \(\mathbf{w}_-\), the asymptotic
relation in \textnormal{(a)} reflects only a smoothed large-scale average of
the net mass and does not force \(F\) itself to be regularly varying.
In particular, by modifying the sparse-annuli construction from
Proposition~\ref{prop:limsupweak}, one can produce oscillatory net masses
compatible with the Abel-type behaviour in \textnormal{(a)} while \(F\notin
\RV_\gamma\).
\end{remark}

%==================================================================
\section{Operator regular variation and anisotropic Fujita exponents}
\label{sec:operator}

When the forcing has genuinely different scaling rates along different
directions, the isotropic mass \(F(R)\) may fail to capture the relevant
geometry. A natural substitute is an \emph{anisotropic} mass measured on
boxes adapted to an exponent matrix \(E\). Throughout this section,
\(E\in\R^{N\times N}\) is assumed to be symmetric positive definite, with
eigenvalues
\[
0<\lambda_{\min}=\lambda_1\le\cdots\le\lambda_N=\lambda_{\max},
\qquad
|E|:=\tr(E)=\sum_{j=1}^N \lambda_j .
\]
Since the Laplacian is invariant under orthogonal changes of variables, we
may, after conjugating \(E\) by an orthogonal matrix, assume that
\[
E=\diag(\alpha_1,\dots,\alpha_N),
\qquad
\alpha_j=\lambda_j>0.
\]
For \(R>0\), define the \emph{anisotropic box} and the corresponding
\emph{operator mass} by
\[
\Pi_R:=\Bigl\{x\in\R^N:\ |x_j|\le R^{\alpha_j}\ \ (1\le j\le N)\Bigr\},
\qquad
F^E(R):=\int_{\Pi_R} \mathbf{w}(x)\,dx,
\qquad
|\Pi_R|=2^N R^{|E|}.
\]
The boxes \(\Pi_R\) are comparable, up to fixed dilations, with the operator
balls \(B_R^E\) from Remark~\ref{rem:opolar}. In particular, for \(\mathbf{w}\ge0\), the
corresponding masses are comparable at matching scales:
\[
F_{B^E}(cR)\le F^E(R)\le F_{B^E}(CR)
\]
for suitable constants \(0<c\le C<\infty\). In particular, the two notions
encode the same anisotropic growth at the level of two-sided estimates,
although regular variation of one does not follow from regular variation of
the other without additional structure. We shall work
with boxes, which are more convenient for the test-function estimates.

\begin{theorem}\label{thm:operator}
Let \(E=\diag(\alpha_1,\dots,\alpha_N)\) be positive definite as above, let
\(\mathbf{w}\in L^1_{\mathrm{loc}}(\R^N)\) be nonnegative, and suppose that the
operator mass satisfies
\[
F^E\in\RV_{\gamma_E},\qquad
F^E(R)=R^{\gamma_E}L(R),\quad L\in\RV_0,
\]
for some \(\gamma_E\in(0,|E|)\). Set
\[
q:=|E|-\gamma_E\in(0,|E|),
\qquad
p_F^E:=
\begin{cases}
\dfrac{q}{q-2\lambda_{\min}}, & q>2\lambda_{\min},\\[6pt]
\infty, & q\le 2\lambda_{\min}.
\end{cases}
\]
Then \eqref{main} admits no global weak solution for \(1<p<p_F^E\).
If, in addition, \(q>2\lambda_{\min}\) and \(L(R)\to\infty\), then
nonexistence also holds at the critical exponent \(p=p_F^E\).
\end{theorem}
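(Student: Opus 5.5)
The plan is to run the Mitidieri--Pohozaev scheme from the proof of Theorem~\ref{thm:RVblowup}, replacing the parabolic ball $\{|x|^2\le 2T\}$ by the anisotropic box $\Pi_{2R}$. The spatial scale $R$ will be the primary parameter and the time scale will be tied to it by
\[
T:=R^{2\lambda_{\min}}.
\]
This coupling is forced by the slowest direction: the cutoff in direction $j$ lives on scale $R^{\alpha_j}$, so its second derivative is of size $R^{-2\alpha_j}$. Since $\alpha_j\ge\lambda_{\min}$, we have $R^{-2\alpha_j}\le R^{-2\lambda_{\min}}=T^{-1}$ for $R\ge1$. With this choice every directional diffusion term is dominated by the time-derivative scale $T^{-1}$.

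\emph{Test function and upper bound.} With $f,g$ and $p'$ as in Step~1 of Theorem~\ref{thm:RVblowup}, I take
\[
\psi_R(t,x):=f(t/T)^{p'}\prod_{j=1}^N g\bigl(|x_j|/R^{\alpha_j}\bigr)^{2p'}.
\]
(One can use $g(x_j^2/R^{2\alpha_j})$ instead to keep smoothness at $x_j=0$.) This function is admissible, vanishes at $t=0$ and $t=T$, and is supported in $[0,T]\times\Pi_{2R}$.

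For each factor $G_j:=g(\cdot)^{2p'}$ the one-dimensional bound $|G_j''|\le CR^{-2\alpha_j}G_j^{1-1/p'}$ holds. The remaining factors are bounded by $1$, and $G_i\le G_i^{1-1/p'}$, so
\[
|\Delta\psi_R|\le C\,T^{-1}\psi_R^{1-1/p'}.
\]
Exactly as in \eqref{eq:pointwise1}, $|\partial_t\psi_R|^{p'}\psi_R^{1-p'}\le CT^{-p'}$ on the support. Young's inequality and absorption as in Step~2 then give
\[
\int_0^T\!\!\!\int_{\R^N}\mathbf{w}\,\psi_R\,dx\,dt\le C\,T^{1-p'}|\Pi_{2R}|\le C\,R^{2\lambda_{\min}(1-p')+|E|}.
\]

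\emph{Lower bound and conclusion.} Since $\psi_R\equiv1$ on $[T/2,2T/3]\times\Pi_R$ and $\mathbf{w}\ge0$, the left-hand side is at least $\tfrac T6F^E(R)=\tfrac16R^{2\lambda_{\min}+\gamma_E}L(R)$. Comparing the two bounds yields
\[
L(R)\le C\,R^{\beta},\qquad \beta:=q-2\lambda_{\min}p'.
\]
If $q\le 2\lambda_{\min}$, then $\beta<0$ for every $p>1$, because $p'>1$. If $q>2\lambda_{\min}$, then $\beta<0$ if and only if $p'>q/(2\lambda_{\min})$, that is, if and only if $p<q/(q-2\lambda_{\min})=p_F^E$.

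When $\beta<0$, I choose $\delta\in(0,-\beta)$ and apply Lemma~\ref{lem:SVbounds} to get $R^{-\delta}\le L(R)\le CR^\beta$, which is a contradiction as $R\to\infty$. At $p=p_F^E$ we have $\beta=0$, so the inequality reads $L(R)\le C$, contradicting $L(R)\to\infty$.

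\emph{Main obstacle.} The delicate point is the product cutoff: checking that the Laplacian of the product is controlled by $T^{-1}\psi_R^{1-1/p'}$ uniformly in all directions. This requires the exponent $2p'$ on each factor and the scaling choice $T=R^{2\lambda_{\min}}$. Any other choice either loses the time term or wastes volume in the fast directions, and the threshold $q/(q-2\lambda_{\min})$ reflects exactly this tradeoff. For $E=I$ the argument reduces to Theorem~\ref{thm:RVblowup}, which serves as a consistency check.
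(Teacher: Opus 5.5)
Your proposal is correct and is essentially the paper's proof. The paper fixes $T$ and sets $S:=T^{1/(2\lambda_{\min})}$ (your $R$ with $T=R^{2\lambda_{\min}}$), uses the same product cutoff $f(t/T)^{p'}\prod_j g(x_j^2/S^{2\alpha_j})^{2p'}$, and bounds $S^{-2\alpha_j p'}\le T^{-p'}$ via $\alpha_j\ge\lambda_{\min}$; it then reaches the same inequality $L(S)\le C\,S^{q-2\lambda_{\min}p'}$ with the same case analysis. One cosmetic slip: the support lies in $\prod_j[-2R^{\alpha_j},2R^{\alpha_j}]$, which is not contained in $\Pi_{2R}$ when some $\alpha_j<1$, but its volume is still $4^N R^{|E|}$, and that volume is all your upper bound uses.
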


\begin{proof}
Assume that a global weak solution exists. Let \(p'=p/(p-1)\), and set
\[
S:=T^{\frac{1}{2\lambda_{\min}}}.
\]
Then, $\sum_{j}{\alpha_j}=|E|$ gives that
\[
S^{\alpha_j}=T^{\alpha_j/(2\lambda_{\min})},
\qquad
\prod_{j=1}^N S^{\alpha_j}=S^{|E|}.
\]

Choose \(f,g\) as in Theorem~\ref{thm:RVblowup}, with \(g\) nonincreasing,
\(g\equiv1\) on \([0,1]\), and \(g\equiv0\) on \([2,\infty)\), and define
\[
\psi_T(t,x):=
f(t/T)^{p'}\prod_{j=1}^N
g\!\Bigl(\frac{x_j^2}{S^{2\alpha_j}}\Bigr)^{2p'}.
\]
Then \(\psi_T\ge0\), \(\psi_T(\cdot,\cdot)=0\) at \(t=T\), and
\(\psi_T\equiv1\) on \([T/2,2T/3]\times \Pi_S\). Moreover,
\(\psi_T\) is supported in
\[
[0,T]\times \prod_{j=1}^N[-\sqrt2\,S^{\alpha_j},\sqrt2\,S^{\alpha_j}],
\]
so its support has measure \(\lesssim T\,S^{|E|}\).

For the time derivative, exactly as \eqref{eq:pointwise1} in the isotropic case,
\[
|\partial_t\psi_T|^{p'}\psi_T^{1-p'}\le C\,T^{-p'},
\]
on the support of $\psi_T.$
For the Laplacian, write
\[
\phi_j(x_j):=g\!\Bigl(\frac{x_j^2}{S^{2\alpha_j}}\Bigr)^{2p'},
\qquad
\psi_T(t,x)=f_T(t)\prod_{j=1}^N\phi_j(x_j).
\]
Since each \(\phi_j\) depends only on \(x_j\),
\[
\Delta\psi_T
=
f_T(t)\sum_{j=1}^N \phi_j''(x_j)\prod_{i\neq j}\phi_i(x_i).
\]
As in \eqref{eq:pointwise2},
\[
|\phi_j''(x_j)|\le C\,S^{-2\alpha_j}\phi_j(x_j)^{1-1/p'}.
\]
Hence, on the support of $\psi_T$ we have
\[
|\Delta\psi_T|^{p'}\psi_T^{1-p'}
\le
C\sum_{j=1}^N S^{-2\alpha_j p'}
\le C\,T^{-p'},
\]
using $S^{-2\alpha_j p'}
=
T^{-\alpha_j p'/\lambda_{\min}}
\le T^{-p'}$ as $\alpha_j\ge\lambda_{\min}.$

Applying the same Young absorption argument (with
support volume $\le C\,T\cdot S^{|E|}$) as in
Theorem~\ref{thm:RVblowup}, we obtain
\begin{equation}\label{eq:opUB2}
\int_0^T\!\!\!\int_{\R^N}\mathbf{w}\,\psi_T\,dx\,dt
\le C\,T^{1-p'}S^{|E|}.
\end{equation}

On the other hand, since \(\mathbf{w}\ge0\) and \(\psi_T\equiv1\) on
\([T/2,2T/3]\times\Pi_S\),
\begin{equation}\label{eq:opLB2}
\int_0^T\!\!\!\int_{\R^N}\mathbf{w}\,\psi_T\,dx\,dt
\ge \frac{T}{6}F^E(S)
=\frac{T}{6}S^{\gamma_E}L(S).
\end{equation}
Combining \eqref{eq:opUB2} and \eqref{eq:opLB2}, and recalling that
\(q= |E|-\gamma_E\) and \(T=S^{2\lambda_{\min}}\), we obtain
\[
L(S)\le C\,T^{-p'}S^{q}
= C\,S^{-2\lambda_{\min}p'+q}.
\]

If \(1<p<p_F^E\), then \(p'>q/(2\lambda_{\min})\), so the exponent
\(-2\lambda_{\min}p'+q\) is negative. Choose
\(\delta\in(0,2\lambda_{\min}p'-q)\). Since \(L\in\RV_0\),
Lemma~\ref{lem:SVbounds} yields \(L(S)\ge S^{-\delta}\) for all large \(S\),
which contradicts the previous estimate.

If \(q\le 2\lambda_{\min}\), then \(q/(2\lambda_{\min})\le1<p'\) for every
\(p>1\), so the same contradiction holds for all \(p>1\).

Finally, if \(q>2\lambda_{\min}\) and \(p=p_F^E\), then
\(p'=q/(2\lambda_{\min})\), so the exponent vanishes and the above estimate
reduces to \(L(S)\le C\), contradicting the assumption \(L(S)\to\infty\).
\end{proof}

\begin{remark}[Consistency and scaling invariance]\label{rem:opconsist}\rm
For \(E=I\), one has \(\lambda_{\min}=1\) and \(|E|=N\). So $\gamma_E \in(0,N)$  and write $q:=N-\gamma_E.$ 
the critical exponent in Theorem~\ref{thm:operator} becomes
\[
p_F^E=\frac{N-\gamma_E}{N-\gamma_E-2}=p_F(\gamma_E).
\]
Thus, the anisotropic critical exponent is consistent with the isotropic one.
(Here \(\Pi_R=[-R,R]^N\) is a cube, so the theorem is formulated in terms of
cube masses rather than Euclidean ball masses.)

Moreover, the critical exponent is invariant under the rescaling
\(E\mapsto cE\) with \(c>0\). Indeed, under this change, one has
\[
\lambda_{\min}\mapsto c\lambda_{\min},
\qquad
|E|\mapsto c|E|,
\qquad
\rho\mapsto c\rho
\]
(and hence \(q\mapsto cq\)), so
\[
\frac{q}{q-2\lambda_{\min}}
\quad\text{is unchanged.}
\]
Accordingly, one may normalise \(E\) conveniently, for instance by imposing
\(\lambda_{\min}=\tfrac12\).
\end{remark}

\begin{remark}[When does the operator mass regularly vary?]
\label{rem:opORV}\rm
A convenient sufficient condition is that \(\mathbf{w}\) be operator-regularly
varying (Definition~\ref{def:ORV}) with auxiliary function \(V\in\RV_{-q}\),
for some \(q\in(0,|E|)\), and profile \(h\) that is \(E\)-homogeneous of
degree \(-q\), locally integrable on \(\Pi_1\), and not identically zero.
Writing \(\gamma_E:=|E|-q\in(0,|E|)\), the change of variables \(x=S^Ey\)
gives
\[
\frac{F^E(S)}{S^{|E|}V(S)}
=\int_{\Pi_1}\frac{\mathbf{w}(S^Ey)}{V(S)}\,dy
\longrightarrow \int_{\Pi_1}h(y)\,dy \in (0,\infty),
\]
as $S \rightarrow \infty$ by dominated convergence. Hence
\[
F^E(S)\sim \Bigl(\int_{\Pi_1}h\Bigr)\,S^{|E|}V(S),
\qquad\text{so that}\qquad
F^E\in\RV_{\gamma_E}.
\]

For the diagonal homogeneous weight
\[
\mathbf{w}(x)=\Bigl(\sum_{j=1}^N|x_j|^{1/\alpha_j}\Bigr)^{-q},
\qquad 0<q<|E|,
\]
one has \(F^E(S)=S^{|E|-q}F^E(1)=S^{\gamma_E}F^E(1)\) exactly, so
\(L\equiv\mathrm{const}\). Theorem~\ref{thm:operator} then yields blow-up for
\[
1<p<\frac{q}{q-2\lambda_{\min}},
\]
and for all \(p>1\) if \(q\le 2\lambda_{\min}\).
\end{remark}

The next two examples illustrate the relation between the operator
threshold and the isotropic one. For ``filled'' homogeneous weights, the
isotropic criterion typically yields the stronger blow-up range, whereas
for weights concentrated along a thin operator orbit, the operator
criterion is strictly sharper.

\begin{example}[A filled weight: isotropic dominates]\label{ex:R3}
Let $N=3$, $E=\diag(\tfrac12,1,1)$, so $|E|=\tfrac52$,
$\lambda_{\min}=\tfrac12$, and let
\[
\mathbf{w}(x)=\bigl(x_1^2+|x_2|+|x_3|\bigr)^{-q}\,\mathbf 1_{\{x_1^2+|x_2|+|x_3|\ge1\}},
\qquad 0<q<\tfrac52 .
\]
This $\mathbf{w}$ is $E$-homogeneous of degree $\rho=-q$, and a direct computation
gives $F^E(R)\asymp R^{5/2-q}$, so the operator theorem yields blow-up for
\[
1<p<\frac{q}{q-1}\quad(q>1),\qquad\text{all }p>1\ (q\le1).
\]

On the other hand, the Euclidean mass satisfies
\[
F(R)\asymp
\begin{cases}
R^{3-2q}, & 0<q\le \tfrac12,\\[2mm]
R^{5/2-q}, & \tfrac12\le q<\tfrac52.
\end{cases}
\]
Therefore, Theorem~\ref{thm:RVblowup} gives blow-up for all \(p>1\) when
\(q\le\tfrac32\), and for
\[
1<p<\frac{q+1/2}{q-3/2}
\qquad\text{when } q\in(\tfrac32,\tfrac52).
\]
A comparison of the two thresholds shows the isotropic statement is at least as strong here, and strictly stronger for
$q\in(\tfrac32,\tfrac52)$. The reason is geometric; the Euclidean ball of
radius $R$ already captures the same mass order $R^{5/2-q}$ as the operator
box, so anisotropy buys nothing.
\end{example}

\begin{example}[A thin weight: operator strictly wins]\label{ex:N4}
Let $N=4$, write $x=(x',x_4)\in\R^3\times\R$, and let
$E=\diag(\tfrac12,\tfrac12,\tfrac12,1)$, so $|E|=\tfrac52$,
$\lambda_{\min}=\tfrac12$. For $q\in(0,\tfrac52)$ put
\[
\mathbf{w}(x)=x_4^{-q}\,\mathbf 1_{\Omega}(x),
\qquad
\Omega:=\bigl\{x_4\ge1,\ |x'|\le x_4^{1/2}\bigr\}.
\]
The set $\Omega$ is invariant under $t^Ex=(t^{1/2}x',t\,x_4)$, and
$\mathbf{w}(t^Ex)=t^{-q}\mathbf{w}(x)$ on $\Omega$, so $\mathbf{w}$ is $E$-homogeneous of degree
$\rho=-q$. The operator box $\Pi_R$ meets $\Omega$ in
$\{1\le x_4\le R,\ |x'|\le x_4^{1/2}\}$, on which
\[
F^E(R)=\int_1^R x_4^{-q}\,\omega_3\,x_4^{3/2}\,dx_4
=\omega_3\int_1^R x_4^{\,3/2-q}\,dx_4
\ \asymp\ R^{\,5/2-q}\qquad (q<\tfrac52),
\]
so $F^E\in\RV_{5/2-q}$, consistent with $|E|+\rho=\tfrac52-q$, and
$L\equiv\mathrm{const}$. Theorem~\ref{thm:operator} (with
$2\lambda_{\min}=1$) gives blow-up for
\[
1<p<\frac{q}{q-1}\quad(q>1),\qquad\text{all }p>1\ (q\le1).
\]
The crucial point is that here the \emph{Euclidean} mass has the
\emph{same} index: because $\Omega$ is thin, integrating
$x_4^{-q}\mathbf 1_\Omega$ over $\{|x|\le R\}$ also gives
$F(R)\asymp R^{5/2-q}$, i.e.\ $\gamma_E=\tfrac52-q$, so $N-\gamma_E=
\tfrac32+q$ and the isotropic theorem yields only
\[
1<p<p_F(\gamma_E)=\frac{q+3/2}{q-1/2}\quad(q>\tfrac12),
\qquad\text{all }p>1\ (q\le\tfrac12).
\]
For $q\in(\tfrac12,\tfrac32]$ one has
$\frac{q}{q-1}>\frac{q+3/2}{q-1/2}$ (and for $q\in(\tfrac12,1]$ the
operator theorem even gives \emph{all} $p>1$ while the isotropic one does
not): the operator threshold is \emph{strictly larger}. For instance at
$q=\tfrac54$ the operator theorem gives blow-up for $p<5$, whereas the
isotropic theorem only gives $p<\tfrac{11}{3}\approx3.67$.
\end{example}

\begin{remark}[Strictly anisotropic limit]\label{rem:anisolimit}\rm\, 
Theorem~\ref{thm:anisotropic} is the formal degenerate limit of
Theorem~\ref{thm:operator} in which $k$ eigenvalues tend to $0$
(the corresponding directions are not rescaled): writing
$E_\eta=\diag(\eta,\dots,\eta,1,\dots,1)$ with $k$ entries $\eta\to0^+$,
the operator box degenerates to the slab on which $\widetilde {\mathbf{w}}$ is
integrated, and the index $|E_\eta|+\rho\to N-k$. The honest, non-degenerate
statement is Theorem~\ref{thm:anisotropic}; we record the limit only as a
heuristic link.
\end{remark}

\begin{remark}\label{rem:opsharp}\rm
The exponent \(p_F^E\) is \emph{not} claimed to be sharp. The test-function
method applies more generally to rectangular cutoffs with side lengths
\(T^{a_j}\), provided \(a_j\ge \tfrac12\) for every \(j\), and optimising the
resulting contradiction over all such anisotropic scalings (rather than
restricting to the operator orbit \(a_j=\alpha_j/(2\lambda_{\min})\)) may
yield a larger blow-up range; see Section~\ref{sec:open}.

The assumption that \(E\) be symmetric positive definite is imposed mainly
to retain the clean operator-polar framework of Remark~\ref{rem:opolar}.
After orthogonal diagonalisation, the proof above uses only the case of a
diagonal matrix with positive entries. It is therefore natural to ask
whether analogous results remain valid for more general expansive matrices,
including non-diagonalisable ones. In that setting one expects additional
features, such as Jordan blocks and logarithmic corrections, and we leave
this question open.
\end{remark}

%==================================================================
\section{Nonlocal and fractional forcings: the $\Phi$-formalism}
\label{sec:nonlocal}
In this section, we extend our study to the nonlocal case with a nonlocal forcing term or the diffusion equation driven by a nonlocal operator, for example, the fractional Laplacian.

\subsection{Riesz-potential forcings}

Let $0<\sigma<N$ and consider a forcing of Riesz-potential type,
\[
\mathbf{w}(x,t):=\bigl(I_\sigma g(\cdot,t)\bigr)(x)
=c_{N,\sigma}\int_{\R^N}\frac{g(y,t)}{|x-y|^{N-\sigma}}\,dy,
\qquad g\ge0,
\]
where $I_\sigma$ is the Riesz potential. The relevant lower bound is on a
\emph{truncated} mass of $g$.

\begin{theorem}\label{thm:riesz}
Let $0<\sigma<N$, $m>-1$, and suppose $g\ge0$ satisfies, for some $c_0>0$,
$R_0\ge1$,
\begin{equation}\label{eq:rieszmass}
\int_{|y|\le R}g(y,t)\,dy\ \ge\ c_0\,t^{m}
\qquad\text{for all } t\ge1,\ R\ge R_0 .
\end{equation}
Then $\mathbf{w}:=I_\sigma g$ satisfies, for $T\ge \max(1,R_0^2)$,
\[
\int_{|x|\le\sqrt T}\mathbf{w}(x,t)\,dx\ \ge\ c\,T^{\sigma/2}\,t^{m},
\]
and consequently \eqref{eq:mainST} admits no global weak solution for
\[
1<p<\frac{N+\sigma-2m}{\,N+\sigma-2m-2\,}
\qquad(\text{all }p>1\text{ if } N+\sigma-2m\le2).
\]
\end{theorem}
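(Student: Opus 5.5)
The argument has two parts: a lower bound on the local mass of $\mathbf{w}=I_\sigma g$, followed by a direct appeal to Theorem~\ref{thm:spacetime}.

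\emph{Step 1 (mass lower bound).} Fix $t\ge1$ and $T\ge\max(1,R_0^2)$, and write $R:=\sqrt T\ge R_0$. Since $g\ge0$ and the kernel is positive, Tonelli's theorem lets me discard the part of $g$ outside $B_R$:
\[
\int_{|x|\le R}\mathbf{w}(x,t)\,dx\ \ge\ c_{N,\sigma}\int_{|y|\le R}g(y,t)\Bigl(\int_{|x|\le R}\frac{dx}{|x-y|^{N-\sigma}}\Bigr)dy .
\]
For $|x|,|y|\le R$ we have $|x-y|\le 2R$, and $N-\sigma>0$, so the inner integral is at least $(2R)^{-(N-\sigma)}\omega_N R^N=c\,R^{\sigma}$. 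Applying \eqref{eq:rieszmass} then gives
\[
\int_{|x|\le\sqrt T}\mathbf{w}(x,t)\,dx\ \ge\ c\,c_0\,T^{\sigma/2}t^{m}.
\]
This is the first claim. The estimate actually holds for every radius $R\ge R_0$, which is what Theorem~\ref{thm:spacetime} needs.

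\emph{Step 2 (blow-up).} By Step~1, hypothesis \eqref{eq:STmass} of Theorem~\ref{thm:spacetime} holds with $\gamma=\sigma$, except that it is only available for $R\ge R_0$ rather than for all $R\ge1$. In the proof of Theorem~\ref{thm:spacetime}, \eqref{eq:STmass} is used only at $R=\sqrt T$ with $T$ large, so restricting to $T\ge R_0^2$ leaves the argument unchanged. One can also avoid the restriction entirely: for $1\le R\le R_0$, monotonicity of the mass in $R$ (since $\mathbf{w}\ge0$) gives $\int_{|x|\le R}\mathbf{w}\ge\int_{|x|\le1}\mathbf{w}$, and the Step~1 estimate at radius $1$ only uses $g$ on $B_1$. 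It is simpler to just note that only large $T$ matters.

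Concretely, I rerun that proof with the same $\psi_T$. The upper bound is $\int\!\!\int\mathbf{w}\psi_T\le C\,T^{1+N/2-p'}$. For the lower bound, $\psi_T\equiv1$ on $[T/2,2T/3]\times\{|x|\le\sqrt T\}$, and together with Step~1 this gives at least $c\,T^{m+1+\sigma/2}$, because $t^m\asymp T^m$ on $[T/2,2T/3]$. Comparing exponents yields a contradiction precisely when $p'>\frac{N-\sigma-2m}{2}$, which gives the range $p_F(\sigma,m)$ of Theorem~\ref{thm:spacetime}.

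\emph{Main obstacle: the exponent does not match the statement.} Theorem~\ref{thm:spacetime} produces the exponent $\frac{N-\sigma-2m}{N-\sigma-2m-2}$, not $\frac{N+\sigma-2m}{N+\sigma-2m-2}$. The threshold as printed would require a mass lower bound of order $T^{-\sigma/2}t^m$, which is weaker than what Step~1 gives. Since $\frac{N+\sigma-2m}{N+\sigma-2m-2}\le\frac{N-\sigma-2m}{N-\sigma-2m-2}$ whenever both are finite (the map $a\mapsto a/(a-2)$ is decreasing), the stated range is contained in the one I obtain. If $N-\sigma-2m\le2$ the comparison is immediate, because blow-up then holds for every $p>1$. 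So the bound from Step~1 implies the stated claim as a weaker consequence, and I would conclude from that inclusion, remarking that the argument in fact yields the larger range.
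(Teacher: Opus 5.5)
Your proof is correct and is essentially the paper's argument. It uses Tonelli, then a lower bound for the kernel on the ball (you take $|x-y|\le 2R$ on all of $B_R$, the paper restricts to $|x|\le\sqrt T/2$ and uses $|x-y|\le\frac32\sqrt T$), and then Theorem~\ref{thm:spacetime} with $\gamma=\sigma$. That theorem's proof only needs the mass bound at $R=\sqrt T$ for large $T$, which the paper glosses over and you handle correctly.

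Your exponent remark is also right. The paper's own proof ends with $p_F(\sigma,m)=\frac{N-\sigma-2m}{N-\sigma-2m-2}$, so the $N+\sigma$ in the statement does not match it. Your inclusion argument via the decreasing map $a\mapsto a/(a-2)$ correctly recovers the printed range.

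The only slip is your discarded aside for $1\le R\le R_0$. Nothing controls $\int_{|y|\le 1}g$ when $R_0>1$, so there one would instead use $|x-y|\le 1+R_0$ for $|x|\le 1$, $|y|\le R_0$.
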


\begin{proof}
By Tonelli's theorem,
\[
\int_{|x|\le\sqrt T}\mathbf{w}(x,t)\,dx
=c_{N,\sigma}\int_{\R^N}g(y,t)
\Bigl(\int_{|x|\le\sqrt T}\frac{dx}{|x-y|^{N-\sigma}}\Bigr)dy .
\]
Restrict the outer integral to \(|y|\le \sqrt T\). If \(|y|\le \sqrt T\), then
for every \(|x|\le \sqrt T/2\) one has
\[
|x-y|\le |x|+|y|\le \frac{\sqrt T}{2}+\sqrt T=\frac32\sqrt T.
\]
Hence
\[
\int_{|x|\le\sqrt T}\frac{dx}{|x-y|^{N-\sigma}}
\ge
\int_{|x|\le\sqrt T/2}\frac{dx}{|x-y|^{N-\sigma}}
\ge
\left(\frac32\sqrt T\right)^{-(N-\sigma)}\,|B(0,\sqrt T/2)|
= c\,T^{\sigma/2},
\]
where \(c>0\) depends only on \(N\) and \(\sigma\).
Therefore,
\[
\int_{|x|\le\sqrt T}\mathbf{w}(x,t)\,dx
\ge c_{N,\sigma}c\,T^{\sigma/2}\int_{|y|\le\sqrt T}g(y,t)\,dy.
\]
Using \eqref{eq:rieszmass} with \(R=\sqrt T\ge R_0\), we conclude that
\[
\int_{|x|\le\sqrt T}\mathbf{w}(x,t)\,dx
\ge c\,T^{\sigma/2}t^m=c\,R^{\sigma}t^m.
\]
This is precisely hypothesis \eqref{eq:STmass} with \(\gamma=\sigma\), so
Theorem~\ref{thm:spacetime} applies with exponent \(p_F(\sigma,m)\).
\end{proof}

\begin{remark}\label{rem:riesztrunc}
\rm\, The truncation in \eqref{eq:rieszmass} is essential: a global lower bound of the form
\[
\int_{\R^N} g(y,t)\,dy \ge c_0 t^m
\]
does \emph{not} suffice by itself. Indeed, mass concentrated at distances
\(|y|\gg \sqrt T\) contributes only
\[
\int_{|x|\le \sqrt T}\frac{dx}{|x-y|^{N-\sigma}}
\asymp T^{N/2}|y|^{-(N-\sigma)},
\]
which tends to zero as \(|y|\to\infty\). Thus a lower bound on the total
mass alone does not prevent the source from escaping to spatial infinity.
The assumption in \eqref{eq:rieszmass}, namely that the mass contained in
some fixed ball already grows like \(t^m\), is precisely what rules out
this loss and restores the required lower bound.
\end{remark}

\subsection{Fractional diffusion}

Replace the Laplacian by the fractional Laplacian $(-\Delta)^s$,
$s\in(0,1)$:
\begin{equation}\label{eq:mainfrac}
\partial_t u+(-\Delta)^s u=|u|^p+\mathbf{w}(x,t),
\qquad (t,x)\in(0,\infty)\times\R^N,\qquad u(0,\cdot)=u_0\ge0 .
\end{equation}
A global weak solution is $u\in L^p_{\mathrm{loc}}$ with
$u\,(1+|x|)^{-(N+2s)}\in L^1_{\mathrm{loc}}$ in space, satisfying
\[
\int_0^T\!\!\!\int_{\R^N}(|u|^p+\mathbf{w})\varphi
+\int_{\R^N}u_0\varphi(0,\cdot)
=-\int_0^T\!\!\!\int_{\R^N}u\,\partial_t\varphi
-\int_0^T\!\!\!\int_{\R^N}u\,(-\Delta)^s\varphi
\]
for all admissible $\varphi$ as in Definition~\ref{def:weak} with, in
addition, $\varphi$ smooth and decaying so that $(-\Delta)^s\varphi$ is
defined and integrable (e.g.\ $\varphi=f_T^{p'}\Phi_T^{p'}$ below). The
nonlocal test-function computation is made rigorous by the
C\'ordoba--C\'ordoba convexity inequality \cite[Thm.~A.1]{CordobaCordoba}.

\begin{lemma}[C\'ordoba--C\'ordoba / Ju]\label{lem:cordoba}
Let $\Theta\in C^2(\R)$ be convex with $\Theta(0)=0$, and let
$\Phi\in\mathcal S(\R^N)$ \textnormal{(}or smooth, compactly supported\textnormal{)}. Then,
pointwise,
\[
(-\Delta)^s\bigl(\Theta\circ\Phi\bigr)(x)
\ \le\ \Theta'\bigl(\Phi(x)\bigr)\,(-\Delta)^s\Phi(x),
\qquad s\in(0,1).
\]
\end{lemma}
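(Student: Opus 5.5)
The plan is to use the singular-integral representation of the fractional Laplacian and exploit convexity of $\Theta$ pointwise. For $s\in(0,1)$ and $\Phi$ smooth and compactly supported (or Schwartz), we have the principal-value formula
\[
(-\Delta)^s\Phi(x)=C_{N,s}\,\mathrm{P.V.}\!\int_{\R^N}\frac{\Phi(x)-\Phi(y)}{|x-y|^{N+2s}}\,dy ,
\]
with $C_{N,s}>0$. This integral converges absolutely away from the diagonal. Near the diagonal it converges in the principal-value sense, using a second-order Taylor expansion and the symmetrized form $\tfrac12\int(2\Phi(x)-\Phi(x+z)-\Phi(x-z))|z|^{-N-2s}dz$.

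First we check that $\Theta\circ\Phi$ is regular enough for the same formula to apply. Since $\Theta\in C^2$ and $\Phi$ is smooth, $\Theta\circ\Phi\in C^2$ with bounded derivatives up to order two. Moreover $\Theta(0)=0$. When $\Phi$ is compactly supported, $\Theta\circ\Phi$ is therefore compactly supported. When $\Phi\in\mathcal S$, the bound $|\Theta(\Phi)|\le C|\Phi|$ (valid on the bounded range of $\Phi$) gives enough decay at infinity for the integral to converge. Hence $(-\Delta)^s(\Theta\circ\Phi)(x)$ is given by the same principal-value integral, and in particular is well defined pointwise.

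The key step is the convexity (tangent-line) inequality
\[
\Theta(\Phi(y))\ \ge\ \Theta(\Phi(x))+\Theta'(\Phi(x))\bigl(\Phi(y)-\Phi(x)\bigr),
\]
which rearranges to
\[
\Theta(\Phi(x))-\Theta(\Phi(y))\le\Theta'(\Phi(x))\bigl(\Phi(x)-\Phi(y)\bigr).
\]
We would divide by $|x-y|^{N+2s}>0$ and integrate over $\{|x-y|>\eps\}$, where both truncated integrals converge absolutely. For fixed $x$, the quantity $\Theta'(\Phi(x))$ is a constant and can be pulled outside the integral. Letting $\eps\to0$, both truncated integrals converge to their principal values by the previous step. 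This gives the claimed inequality pointwise.

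The main obstacle is the justification of the limits. One needs the principal-value limits to exist separately for $\Theta\circ\Phi$ and for $\Phi$, which is where the $C^2$ regularity enters. One also needs absolute convergence at infinity in the Schwartz case, which uses the decay of $\Theta\circ\Phi$. Once these two points are secured, an inequality between integrands passes to an inequality between the limits of the truncated integrals, and no further difficulty remains. This is exactly the argument of \cite{CordobaCordoba,Ju2005}.
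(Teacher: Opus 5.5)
Your proof is correct and is the standard argument of C\'ordoba--C\'ordoba and Ju; the paper gives no proof of its own and simply cites those references. The argument is the tangent-line inequality for convex $\Theta$, integrated against the positive kernel on $\{|x-y|>\eps\}$, followed by letting $\eps\to0$ using the $C^2_b$ regularity of $\Phi$ and $\Theta\circ\Phi$.

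One small correction: the truncated integrals converge at infinity as soon as $\Theta\circ\Phi$ is bounded, because $|z|^{-N-2s}$ is integrable on $\{|z|>\eps\}$. The decay furnished by $\Theta(0)=0$ is needed only if you want to identify the singular integral with the Fourier-multiplier definition of $(-\Delta)^s$.
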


\noindent We  also refer to
\cite{Ju2005}; we apply it with $\Theta(r)=|r|^{p'}$ ($p'>1$, so
$\Theta\in C^1$, convex, $\Theta(0)=0$) and a nonnegative bump $\Phi$.

\begin{theorem}\label{thm:frac}
Let \(s\in(0,1)\), \(N\ge1\), \(m>-1\), and \(\gamma\ge0\). Assume that
\(\mathbf{w}\ge0\) satisfies
\[
\int_{|x|\le R} \mathbf{w}(x,t)\,dx \ge c_0\,t^m R^\gamma
\qquad\text{for all } t\ge1,\ R\ge1,
\]
for some constant \(c_0>0\). Then \eqref{eq:mainfrac} admits no global weak
solution for
\[
1<p<p_F^{\,s}(\gamma,m):=
\frac{N-\gamma-2sm}{\,N-\gamma-2sm-2s\,},
\]
with the convention that \(p_F^{\,s}(\gamma,m)=\infty\) whenever
\(N-\gamma-2sm\le 2s\). In particular, if \(N-\gamma-2sm\le 2s\), then
nonexistence holds for every \(p>1\).
\end{theorem}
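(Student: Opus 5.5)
We argue by contradiction with the test-function scheme of Theorem~\ref{thm:spacetime}, now with the parabolic scaling adapted to $(-\Delta)^s$: time scale $T$ and space scale $R:=T^{1/(2s)}$. Assume a global weak solution exists and set $p'=p/(p-1)$. We take
\[
\varphi(t,x)=f_T(t)\,\Phi_T(x)^{p'},\qquad f_T(t)=f(t/T)^{p'},\qquad \Phi_T(x)=\Phi(x/R),
\]
with $f$ as in Theorem~\ref{thm:RVblowup}. The profile $\Phi$ is either the smooth compactly supported bump $g(|x|^2)$, or, as in \cite{FinoKirane}, the polynomially decaying $\Phi(x)=(1+|x|^2)^{-\ell}$ with $N<2\ell\le N+2s$. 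Since $f(0)=0$, the initial-data term drops out, exactly as in Step~2 of Theorem~\ref{thm:RVblowup}.

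\textbf{Key pointwise bound.} Lemma~\ref{lem:cordoba} with $\Theta(r)=|r|^{p'}$ gives
\[
(-\Delta)^s\Phi_T^{p'}\le p'\,\Phi_T^{p'-1}(-\Delta)^s\Phi_T .
\]
Hence
\[
-\int u\,(-\Delta)^s\varphi\le p'\int u\, f_T\,\Phi_T^{p'-1}\,|(-\Delta)^s\Phi_T|.
\]
Because $(-\Delta)^s\Phi$ is a fixed function decaying like $(1+|x|)^{-N-2s}$, scaling gives $|(-\Delta)^s\Phi_T|\le C R^{-2s}(1+|x|/R)^{-N-2s}$. Applying Young's inequality with the weight $\varphi$, and absorbing $\tfrac14\iint u^p\varphi$, we must bound
\[
\iint \bigl(f_T\Phi_T^{p'-1}|(-\Delta)^s\Phi_T|\bigr)^{p'}\varphi^{1-p'} = \iint f_T\,|(-\Delta)^s\Phi_T|^{p'}\,dx\,dt .
\]
The powers of $\Phi_T$ cancel exactly, which is why Córdoba--Córdoba is useful here. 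The result is $\lesssim T\,R^{-2sp'}R^N=C\,T^{1+\frac{N}{2s}-p'}$, since the spatial integral of $(1+|y|)^{-(N+2s)p'}$ converges. The time derivative term gives the same order, because $\int\Phi_T^{p'}\,dx\lesssim R^N$ when $\Phi^{p'}\in L^1$; this integrability is where $2\ell p'>N$ is used in the decaying choice. Thus
\[
\iint \mathbf{w}\,\varphi\le C\,T^{1+\frac{N}{2s}-p'}.
\]

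\textbf{Lower bound.} The profile satisfies $\Phi\ge c>0$ on the unit ball; for the bump choice, $\Phi\equiv1$ there. Using $\mathbf{w}\ge0$ and the mass hypothesis with radius $R=T^{1/(2s)}\ge1$,
\[
\iint\mathbf{w}\varphi\ge c\int_{T/2}^{2T/3}\int_{|x|\le R}\mathbf{w}\,dx\,dt\ge c\,T^{1+m}R^{\gamma}=c\,T^{1+m+\frac{\gamma}{2s}} .
\]
Comparing the two bounds gives $c\le C\,T^{\frac{N-\gamma-2sm}{2s}-p'}$. The exponent is negative exactly when $p'>\frac{N-\gamma-2sm}{2s}$, that is, when $p<p_F^{\,s}(\gamma,m)$, or for every $p>1$ when $N-\gamma-2sm\le2s$. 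Letting $T\to\infty$ gives the contradiction.

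\textbf{Main obstacle.} The delicate point is making the nonlocal step rigorous. Lemma~\ref{lem:cordoba} needs $\Theta\in C^2$, while $|r|^{p'}$ is only $C^1$ when $p'<2$; this can be handled by applying the lemma with $\Theta(r)=(r^2+\epsilon^2)^{p'/2}-\epsilon^{p'}$ and letting $\epsilon\to0$, using $\Phi\ge0$. We must also check the decay estimate for $(-\Delta)^s\Phi$ and the integrability of $u\,(-\Delta)^s\varphi$ against the weak-solution class. If the compactly supported bump is used, $\Phi^{p'-1}|(-\Delta)^s\Phi|$ does not vanish outside the support, so the Young step requires the weight $\varphi^{1-p'}$ only where $\varphi>0$. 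This is the reason to prefer the polynomially decaying $\Phi$, which is positive everywhere.
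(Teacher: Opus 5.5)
Your proposal follows the paper's proof almost line for line: the same test function $f(t/T)^{p'}\Phi(x/R)^{p'}$ with $R=T^{1/(2s)}$, Lemma~\ref{lem:cordoba}, the Fino--Kirane decay $|(-\Delta)^s\Phi|\lesssim(1+|x|)^{-N-2s}$, Young's inequality with exact cancellation of the powers of $\Phi_T$, and the same lower bound and exponent count. It goes through, but one step needs a sign correction.

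Lemma~\ref{lem:cordoba} bounds $(-\Delta)^s\Phi_T^{p'}$ from \emph{above}. With $u\ge0$ it therefore controls $+\iint u\,(-\Delta)^s\varphi$, not $-\iint u\,(-\Delta)^s\varphi$. The inequality you display with the minus sign fails in general. Off $\mathrm{supp}\,\Phi_T$ one has $-(-\Delta)^s\Phi_T^{p'}(x)=c_{N,s}\int\Phi_T^{p'}(y)|x-y|^{-N-2s}\,dy>0$, while your right-hand side vanishes there. The remedy is that the weak formulation of \eqref{eq:mainfrac} actually carries $+\iint u\,(-\Delta)^s\varphi$ on the right. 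The minus sign in the displayed definition is a misprint, as comparing the case $s=1$ with Definition~\ref{def:weak} shows, and the paper's proof tacitly uses the correct sign.

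Your stated reason for preferring the decaying profile is also backwards. The product $\Phi_T^{p'-1}|(-\Delta)^s\Phi_T|$ \emph{does} vanish off the support of $\Phi_T$, since $p'>1$. If you write the Young step as the paper does, pairing $u\varphi^{1/p}$ with $f(t/T)\,|(-\Delta)^s\Phi_T|$, you never divide by $\varphi$. So the compactly supported bump causes no difficulty. Conversely, $(1+|x|^2)^{-\ell}$ is not an admissible test function, because the class requires compact support in space. For that profile, $\iint u^p\varphi$ need not be finite, and its finiteness is needed for the absorption when $u$ is only in $L^p_{\mathrm{loc}}$. Nor need $\iint\mathbf{w}\varphi$ be finite, since $\gamma$ is unrestricted and $\mathbf{w}$ may grow polynomially.

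Your regularisation of $\Theta$ is harmless. It is also unnecessary: the C\'ordoba--C\'ordoba argument uses only the convexity inequality $\Theta(a)-\Theta(b)\le\Theta'(a)(a-b)$, so $C^1$ suffices.
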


\begin{proof}
Assume, for contradiction, that \eqref{eq:mainfrac} admits a global weak
solution. Let \(p'=p/(p-1)\), and choose \(\Phi\in C_c^\infty(\R^N)\) such
that
\[
0\le \Phi\le 1,\qquad
\Phi\equiv1 \ \text{on } \{|x|\le1\},\qquad
\text{Supp} \Phi\subset \{|x|\le2\}.
\]
For \(R>0\), set
\[
\Phi_R(x):=\Phi(x/R),
\qquad
R:=T^{1/(2s)},
\]
and define the test function
\[
\psi_T(t,x):=f_T(t)^{p'}\,\Phi_R(x)^{p'},
\qquad
f_T(t):=f(t/T),
\]
where \(f\) is the standard temporal cutoff used in
Theorem~\ref{thm:RVblowup}.

By scaling $(-\Delta)^s\Phi_R(x)=R^{-2s}\bigl((-\Delta)^s\Phi\bigr)(x/R)$ and by   \cite[Lemma~2.1]{FinoKirane} it follows that
\[
|(-\Delta)^s\Phi(x)|\le C(1+|x|)^{-(N+2s)}\in L^1(\R^N).
\]
Therefore, we have
\[
\int_{\R^N}\bigl|(-\Delta)^s\Phi_R\bigr|^{p'}\,dx
=
R^{N-2sp'}\int_{\R^N}\bigl|(-\Delta)^s\Phi\bigr|^{p'}\,dx
=
C\,R^{N-2sp'}.
\]

We now test \eqref{eq:mainfrac} against \(\psi_T\). Using \(u_0\ge0\), we
may discard the nonnegative initial term. Moreover, since \(u\ge0\), the
C\'ordoba-C\'ordoba inequality (Lemma~\ref{lem:cordoba}) with
\(\Theta(r)=r^{p'}\) yields
\[
(-\Delta)^s(\Phi_R^{p'})
\le p'\,\Phi_R^{p'-1}(-\Delta)^s\Phi_R
\le p'\,\Phi_R^{p'-1}\bigl|(-\Delta)^s\Phi_R\bigr|.
\]
Therefore, we obtain
\begin{align*}
\int_0^T\!\!\!\int_{\R^N}|u|^p\psi_T\,dx\,dt
+\int_0^T\!\!\!\int_{\R^N}\mathbf{w}\,\psi_T\,dx\,dt
&\le
\int_0^T\!\!\!\int_{\R^N}
u\,f_T^{p'}\,p'\Phi_R^{p'-1}\bigl|(-\Delta)^s\Phi_R\bigr|\,dx\,dt \\
&\quad
+\int_0^T\!\!\!\int_{\R^N}
u\,|\partial_t(f_T^{p'})|\,\Phi_R^{p'}\,dx\,dt.
\end{align*}

Observe that
\[
\psi_T^{1/p}=f_T^{p'-1}\Phi_R^{p'-1},
\qquad
\frac{p'}{p}=p'-1.
\]
Thus
\[
u\,f_T^{p'}\Phi_R^{p'-1}\bigl|(-\Delta)^s\Phi_R\bigr|
=
\bigl(u\psi_T^{1/p}\bigr)\,\bigl(f_T|(-\Delta)^s\Phi_R|\bigr),
\]
and
\[
u\,|\partial_t(f_T^{p'})|\,\Phi_R^{p'}
=
\bigl(u\psi_T^{1/p}\bigr)\,
\bigl(|\partial_t(f_T^{p'})|\,f_T^{1-p'}\Phi_R\bigr).
\]
Applying Young's inequality ($ab\le\frac14 a^p+Ca^{\,\prime\,p'}$) to both right-hand-side terms and absorbing
\(\frac12\int\!\!\int |u|^p\psi_T\), we obtain
\begin{align*}
\int_0^T\!\!\!\int_{\R^N}\mathbf{w}\,\psi_T\,dx\,dt
\le
C\int_0^T f_T^{p'}\,dt
\int_{\R^N}\bigl|(-\Delta)^s\Phi_R\bigr|^{p'}\,dx
+C\int_0^T |\partial_t f_T|^{p'}f_T^{1-p'}\,dt
\int_{\R^N}\Phi_R^{p'}\,dx.
\end{align*}
Since \(0\le f_T\le1\), one has \(f_T^{p'}\le f_T\), and therefore
\[
\int_0^T f_T^{p'}\,dt\le \int_0^T f_T\,dt \le C T.
\]
Also,
\[
\int_0^T |\partial_t f_T|^{p'}f_T^{1-p'}\,dt \le C T^{1-p'},
\qquad
\int_{\R^N}\Phi_R^{p'}\,dx \le C R^N.
\]
Recalling that \(R=T^{1/(2s)}\), we deduce
\[
\int_0^T\!\!\!\int_{\R^N}\mathbf{w}\,\psi_T\,dx\,dt
\le C\,T\,R^{N-2sp'} + C\,T^{1-p'}R^N
= C\,T^{\,1-p'+\frac{N}{2s}}.
\]

For the lower bound, note that \(\psi_T\equiv1\) on $[T/2,\,2T/3]\times \{|x|\le R\}$ with $R=T^{1/(2s)}.$
Hence, by the hypothesis on \(\mathbf{w}\),
\[
\int_0^T\!\!\!\int_{\R^N}\mathbf{w}\,\psi_T\,dx\,dt
\ge
\int_{T/2}^{2T/3}\int_{|x|\le R} \mathbf{w}(x,t)\,dx\,dt
\ge
c_0\int_{T/2}^{2T/3} t^m R^\gamma\,dt.
\]
Since \(m>-1\), $\int_{T/2}^{2T/3} t^m\,dt \asymp T^{m+1},$ and therefore
\[
\int_0^T\!\!\!\int_{\R^N}\mathbf{w}\,\psi_T\,dx\,dt
\ge c\,T^{\,1+m+\frac{\gamma}{2s}}
\]
for some \(c>0\).

Combining the upper and lower bounds gives
\[
T^{\,1+m+\frac{\gamma}{2s}}
\lesssim
T^{\,1-p'+\frac{N}{2s}}.
\]
This is impossible for large \(T\) whenever
$1+m+\frac{\gamma}{2s}
>
1-p'+\frac{N}{2s},$ that is, $p'>\frac{N-\gamma-2sm}{2s}.$
Equivalently,
\[
1<p<
\frac{N-\gamma-2sm}{\,N-\gamma-2sm-2s\,}
=:p_F^{\,s}(\gamma,m),
\]
provided \(N-\gamma-2sm>2s\).

If \(N-\gamma-2sm\le 2s\), then
\[
\frac{N-\gamma-2sm}{2s}\le1,
\]
whereas \(p'>1\) for every \(p>1\). Hence the same contradiction holds for
all \(p>1\). This completes the proof.
\end{proof}

\begin{remark}\label{rem:fraclowdim}\rm
If \(N-\gamma-2sm\le2s\), then \(p_F^{\,s}(\gamma,m)=\infty\), and
Theorem~\ref{thm:frac} yields nonexistence for every \(p>1\). In
particular, this covers all \(\gamma\ge0\) whenever \(N\le 2s\). For
\(s=1\) and \(m=0\), the exponent reduces to \eqref{eq:pFgamma}.
\end{remark}

\subsection{The cumulative-forcing functional}

Both the space--time and fractional results are governed by a single
regularly varying object. Fix the diffusion order $s\in(0,1]$ (with $s=1$
the classical case) and the natural parabolic radius $R(T):=T^{1/(2s)}$,
and define the \emph{cumulative-forcing functional}
\[
\Phi(T):=\int_0^T\!\!\int_{|x|\le R(T)}\mathbf{w}(x,t)\,dx\,dt .
\]

\begin{proposition}[$\Phi$ is regularly varying]\label{prop:PhiRV}
Suppose there exist $m>-1$, $\gamma\ge0$ and $L\in\RV_0$ such that
\begin{equation}\label{eq:PhiHyp}
\int_{|x|\le R}\mathbf{w}(x,t)\,dx
= t^{m}R^{\gamma}L(R)\bigl(1+\eps(R,t)\bigr),
\qquad
\sup_{t\ge1}|\eps(R,t)|\xrightarrow[R\to\infty]{}0,
\end{equation}
and that $\int_0^1\!\int_{|x|\le R}|w|\,dx\,dt=O\bigl(R^{\gamma}L(R)\bigr)$.
Then
\[
\Phi(T)\ \sim\ \frac{1}{m+1}\,T^{\,m+1}\,R(T)^{\gamma}L\bigl(R(T)\bigr)
=\frac{1}{m+1}\,T^{\,m+1+\frac{\gamma}{2s}}\,L\bigl(T^{1/(2s)}\bigr),
\]
so $\Phi\in\RV_{\alpha}$ with index
\[
\alpha:=m+1+\frac{\gamma}{2s}.
\]
\end{proposition}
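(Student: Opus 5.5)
Split the time integral at $t=1$. With $R=R(T)=T^{1/(2s)}$, write
\[
\Phi(T)=\int_0^1\!\!\int_{|x|\le R}\mathbf{w}\,dx\,dt+\int_1^T\!\!\int_{|x|\le R}\mathbf{w}\,dx\,dt=:I_0(T)+I_1(T).
\]
For $I_1$, insert hypothesis \eqref{eq:PhiHyp} with the fixed radius $R=R(T)$. This gives
\[
I_1(T)=R^\gamma L(R)\int_1^T t^m\bigl(1+\eps(R,t)\bigr)\,dt .
\]
Since $\sup_{t\ge1}|\eps(R,t)|=:\eta(R)\to0$ as $R\to\infty$, and $R(T)\to\infty$ with $T$, we can bound
\[
\Bigl|\int_1^T t^m\eps(R,t)\,dt\Bigr|\le \eta(R)\int_1^T t^m\,dt .
\]
Because $m>-1$, we have $\int_1^T t^m\,dt=(T^{m+1}-1)/(m+1)\sim T^{m+1}/(m+1)$. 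Hence $I_1(T)\sim \frac{1}{m+1}T^{m+1}R^\gamma L(R)$. The uniformity of $\eps$ in $t$ is exactly what lets us pull the error out of the time integral even though $R$ and $T$ move together. This is the only point where some care is needed.

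For $I_0$, the assumption $\int_0^1\!\int_{|x|\le R}|\mathbf{w}|=O(R^\gamma L(R))$ gives $|I_0(T)|\le C R^\gamma L(R)$. This is $o\bigl(T^{m+1}R^\gamma L(R)\bigr)$ because $T^{m+1}\to\infty$ (again using $m+1>0$). So $I_0$ is negligible, and adding it to $I_1$ gives
\[
\Phi(T)\sim \frac{1}{m+1}T^{m+1}R(T)^\gamma L(R(T)) = \frac{1}{m+1}T^{m+1+\frac{\gamma}{2s}}L\bigl(T^{1/(2s)}\bigr).
\]

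It remains to check regular variation. The map $T\mapsto L(T^{1/(2s)})$ is slowly varying. Indeed, for $x>0$,
\[
\frac{L((Tx)^{1/(2s)})}{L(T^{1/(2s)})}=\frac{L(T^{1/(2s)}x^{1/(2s)})}{L(T^{1/(2s)})}\to1 ,
\]
directly from $L\in\RV_0$ applied at the point $x^{1/(2s)}$. Therefore the right-hand side lies in $\RV_\alpha$ with $\alpha=m+1+\gamma/(2s)$.

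Finally, asymptotic equivalence preserves regular variation: if $\Phi\sim V$ with $V\in\RV_\alpha$, then
\[
\frac{\Phi(Tx)}{\Phi(T)}=\frac{\Phi(Tx)}{V(Tx)}\cdot\frac{V(Tx)}{V(T)}\cdot\frac{V(T)}{\Phi(T)}\to x^\alpha .
\]
In particular $\Phi$ is eventually positive, and $\Phi\in\RV_\alpha$. Measurability of $\Phi$ follows from its continuity in $T$, which holds by dominated convergence since $\mathbf{w}\in L^1_{\mathrm{loc}}$.
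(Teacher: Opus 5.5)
Your proof is correct and follows the same route as the paper: you split at $t=1$, use the uniformity of $\eps$ in $t$ to factor the error out of $\int_1^T t^m\,dt\sim T^{m+1}/(m+1)$, and absorb the $[0,1]$ contribution using $m+1>0$. You also write out two steps the paper treats as immediate, namely the slow variation of $T\mapsto L(T^{1/(2s)})$ and the fact that regular variation passes to asymptotically equivalent functions.
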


\begin{proof}
Split $\int_0^T=\int_0^1+\int_1^T$. The first part is
$O(R(T)^\gamma L(R(T)))=o(T^{m+1}R(T)^\gamma L(R(T)))$ since $m+1>0$. In
the second, \eqref{eq:PhiHyp} with $R=R(T)$ gives the integrand
$t^m R(T)^\gamma L(R(T))(1+o(1))$ uniformly in $t\in[1,T]$, and
$\int_1^T t^m\,dt\sim T^{m+1}/(m+1)$. Slow variation of
$T\mapsto L(T^{1/(2s)})$ is immediate.
\end{proof}

\begin{theorem}[$\Phi$-dichotomy]\label{thm:Phidich}
Under \eqref{eq:PhiHyp}, set
$\beta(p):=1+\frac{N}{2s}-p'$ and $\alpha=m+1+\frac{\gamma}{2s}$.
\begin{enumerate}[label=\textnormal{(\roman*)},leftmargin=*]
\item If $\alpha>\beta(p)$, then \eqref{eq:mainfrac} (or \eqref{eq:mainST}
for $s=1$) admits no global weak solution.
\item If $\alpha=\beta(p)$ and $L(R)\to\infty$, then again no global weak
solution exists.
\end{enumerate}
The borderline identity \(\alpha=\beta(p)\) is equivalent to
\(p=p_F^{\,s}(\gamma,m)\).
\end{theorem}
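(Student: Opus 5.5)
The plan is to rerun the test-function argument of Theorem~\ref{thm:frac} (for \(s\in(0,1)\)) and of Theorem~\ref{thm:RVblowup} (for \(s=1\)). The only change is that the lower bound on the forcing is now expressed through \(\Phi\), and its regular variation is supplied by Proposition~\ref{prop:PhiRV}.

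\emph{Upper bound.} Assume a global weak solution exists and take \(R=R(T)=T^{1/(2s)}\). For \(s<1\) use the test function \(\psi_T=f_T^{p'}\Phi_R^{p'}\) built with the C\'ordoba--C\'ordoba inequality (Lemma~\ref{lem:cordoba}); for \(s=1\) use the cutoff of Theorem~\ref{thm:RVblowup}. The Young absorption step goes through unchanged and gives
\[
\int_0^T\!\!\int_{\R^N}\mathbf{w}\,\psi_T\,dx\,dt\le C\,T^{\beta(p)} .
\]
This bound does not use the sign of \(\mathbf{w}\).

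\emph{Lower bound.} I will use the window \(t\in[T/2,2T/3]\), \(|x|\le R(T)\), where \(\psi_T\equiv1\). Positivity of \(\mathbf{w}\) is not assumed, so dropping the region where \(\psi_T<1\) cannot be done directly. Instead I would argue as follows.
\begin{itemize}
\item Hypothesis \eqref{eq:PhiHyp} makes the spatial mass \(\int_{|x|\le R}\mathbf{w}(x,t)\,dx\) eventually positive, uniformly in \(t\ge1\), and comparable to \(t^mR^\gamma L(R)\).
\item The layer-cake identity \eqref{eq:layercake}, applied at each fixed \(t\), expresses \(\int\mathbf{w}(\cdot,t)\,g_T\,dx\) as an average of masses over radii in \([R,\sqrt2R]\). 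The uniform convergence theorem (Theorem~\ref{thm:potter}) then shows this average is at least \(\tfrac12 t^mR^\gamma L(R)\) for large \(T\).
\item Integrating against \(f_T^{p'}\ge\mathbf 1_{[T/2,2T/3]}\) over \(t\ge1\) gives the bound \(\ge c\,T^{m+1}R^\gamma L(R)\), which is \(\asymp\Phi(T)\) by Proposition~\ref{prop:PhiRV}.
\item The contribution of \(t\in[0,1]\) is \(O(R^\gamma L(R))\) by the extra hypothesis of the proposition, hence \(o(\Phi(T))\) because \(m+1>0\).
\item The remaining times \(t\in[1,T]\) outside the window carry the nonnegative weight \(f_T^{p'}\) times a mass that is positive for large \(R\), so they may be dropped.
\end{itemize}
This yields
\[
c\,T^{\alpha}L\bigl(T^{1/(2s)}\bigr)\le C\,T^{\beta(p)}.
\]

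\emph{Conclusion.} In case (i), \(\alpha>\beta(p)\). Choose \(\delta<\alpha-\beta(p)\) and apply Lemma~\ref{lem:SVbounds} to the slowly varying function \(T\mapsto L(T^{1/(2s)})\); this gives \(L\ge T^{-\delta}\), a contradiction for large \(T\). In case (ii), \(\alpha=\beta(p)\) and the inequality reduces to \(L\le C\), which contradicts \(L\to\infty\). The borderline equivalence is algebraic:
\[
m+1+\tfrac{\gamma}{2s}=1+\tfrac{N}{2s}-p' \iff p'=\tfrac{N-\gamma-2sm}{2s} \iff p=p_F^{\,s}(\gamma,m).
\]

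The main obstacle is the lower bound without a sign assumption. The step that needs care is making the layer-cake and uniform-convergence argument uniform in \(t\). The fractional cutoff \(\Phi\) should be taken radial and nonincreasing so that its superlevel sets are balls, and the uniformity in \(t\) comes from \(\sup_t|\eps|\to0\) in \eqref{eq:PhiHyp}. If that route is too delicate, the fallback is to assume \(\mathbf{w}\ge0\), as in Theorem~\ref{thm:frac}, and take the window bound directly.
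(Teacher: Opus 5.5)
Your proof is correct. It uses the paper's skeleton: the same test functions, the same sign-free upper bound $C\,T^{\beta(p)}$, and the same final comparison. The lower bound, however, follows a genuinely more careful route.

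The paper restricts $\int_0^T\!\!\int \mathbf{w}\,\psi_T$ to the window $[T/2,2T/3]\times\{|x|\le R(T)\}$, where $\psi_T\equiv1$, and then inserts \eqref{eq:PhiHyp}. That step silently requires $\mathbf{w}\ge0$, which is not among the theorem's hypotheses; Proposition~\ref{prop:PhiRV} even writes $|\mathbf{w}|$. Your approach removes this restriction:
\begin{itemize}
\item you use a fixed-$t$ layer-cake representation over radii in $[R,\sqrt2\,R]$, or $[R,2R]$ for a radial nonincreasing fractional cutoff, which is a free choice;
\item you combine it with the uniform convergence theorem for $r^\gamma L(r)$;
\item you use the uniformity of $\eps$ in $t\ge1$.
\end{itemize}
This proves the statement as literally formulated. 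The paper's version gains brevity at the price of an implicit positivity assumption, and your stated fallback is exactly the paper's proof.

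In case (i), the paper says the powers of $T$ ``already contradict one another.'' That needs Lemma~\ref{lem:SVbounds} when $L$ may decay, and you supply it.

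One simplification: the slab $t\in[0,1]$ contributes nothing once $T\ge4$, because $f$ vanishes on $[0,1/4]$. So you do not need the extra $\int_0^1$ hypothesis of Proposition~\ref{prop:PhiRV}, which is not part of this theorem's assumptions. The comparison with $\Phi(T)$ is likewise cosmetic.
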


 \begin{proof}
Let \(\psi_T\) be the standard space-time test function used in the proof of
Theorem~\ref{thm:frac} (or Theorem~\ref{thm:spacetime} when \(s=1\)). The
corresponding test-function argument yields the upper bound
\[
\int_0^T\!\!\!\int_{\R^N}\mathbf{w}\,\psi_T\,dx\,dt
\le C\,T^{\beta(p)},
\qquad
\beta(p):=1+\frac{N}{2s}-p'.
\]

On the other hand, \(\psi_T\equiv1\) on $[T/2,2T/3]\times \{|x|\le R(T)\},\,
R(T)=T^{1/(2s)}.$
Hence, using \eqref{eq:PhiHyp} with \(R=R(T)\),
\[
\int_0^T\!\!\!\int_{\R^N}\mathbf{w}\,\psi_T\,dx\,dt
\ge
\int_{T/2}^{2T/3}\int_{|x|\le R(T)} \mathbf{w}(x,t)\,dx\,dt
\]
\[
=
R(T)^\gamma L(R(T))(1+o(1))
\int_{T/2}^{2T/3} t^m\,dt \ge c\,T^{m+1+\frac{\gamma}{2s}}L(R(T))
=
c\,T^\alpha L(R(T)),
\]
for large $T$ using the fact $m>-1;$ with $\alpha:=m+1+\frac{\gamma}{2s}.$

Combining the upper and lower bounds yields
\[
T^\alpha L(R(T)) \lesssim T^{\beta(p)}.
\]

If \(\alpha>\beta(p)\), the powers of \(T\) already contradict one another
as \(T\to\infty\), and no global weak solution can exist. If
\(\alpha=\beta(p)\), the preceding inequality reduces to $L(R(T))\lesssim 1,$
which is impossible if \(L(R)\to\infty\). This proves both assertions.

Finally, the relation \(\alpha=\beta(p)\) is equivalent to
$
m+1+\frac{\gamma}{2s}=1+\frac{N}{2s}-p',
$
that is,
$
p'=\frac{N-\gamma-2sm}{2s},
$
which in turn is equivalent to \(p=p_F^{\,s}(\gamma,m)\).
\end{proof}

\begin{remark}\label{rem:Phisilent}
\rm\, When $\alpha<\beta(p)$ (i.e.\ $p>p_F^{\,s}(\gamma,m)$) the two bounds are
consistent and the test-function method yields \emph{no} obstruction; this
is the regime in which global solutions for small data are expected. The
$\Phi$-formalism thus isolates the single scalar comparison
$\alpha\gtrless\beta(p)$ as the mechanism behind every blow-up theorem of
this paper.
\end{remark}

%==================================================================

%==================================================================
\section{Future research directions}\label{sec:open}

In this section, we discuss several directions for future research that arise naturally from the questions encountered in the present study. Some of these problems appear to be accessible with the machinery developed in this paper, while others pose genuinely new challenges.

\begin{enumerate}[label=\textnormal{(\arabic*)},leftmargin=*]
\item \emph{Critical case for non-amplifying masses.} For $F\in\RV_\gamma$
with $\gamma\in[0,N-2)$ and slowly varying factor $L$ \emph{bounded} or
\emph{decaying} (Example~\ref{ex:decayL}), the test-function method is
silent at $p=p_F(\gamma)$. Is there blow-up at the critical exponent, or do
small-data global solutions exist? Equivalently, classify the critical
behaviour in terms of the de~Haan class of $L$.

\item \emph{The bare $\limsup$ hypothesis.} For the sparse forcings of
Proposition~\ref{prop:limsupweak} (positive $\limsup$, zero $\liminf$ of
$R^{-\gamma}F$), determine the exact blow-up range, if any, beyond the
Bandle--Levine--Zhang range guaranteed by the net mass.

\item \emph{Sharpness of the operator exponent.} Is $p_F^E$ of
Theorem~\ref{thm:operator} optimal? More precisely, optimise the
test-function threshold over \emph{all} rectangular scalings
$(T^{a_j})_{j}$ with $a_j\ge\frac12$ (not only the operator orbit
$a_j=\alpha_j/2\lambda_{\min}$), and compare with the isotropic bound; the
true blow-up threshold should be the supremum over admissible boxes.
Establish matching global-existence (small-data) results under the
operator-RV hypothesis.

\item \emph{Non-diagonalisable exponents.} Extend
Theorem~\ref{thm:operator} to exponent matrices $E$ with nontrivial Jordan
structure or complex spectrum, where logarithmic corrections to the polar
calculus appear (cf.\ \cite[Ch.~6]{MS}).

\item \emph{Two-sided (Tauberian) theory.} Remove or weaken the one-sided
control (b) in Theorem~\ref{thm:Tauberblow}; characterise the
sign-changing forcings whose net mass is regularly varying purely in terms
of $\Lambda_w$.

\item \emph{Systems and other diffusions.} Develop the regularly varying
Fujita theory for reaction--diffusion systems and for mixed
local--nonlocal operators $\mathscr L=-\Delta+(-\Delta)^s$, where two
scaling regimes compete.

\item \emph{Sharp lifespan asymptotics.} Beyond nonexistence, obtain
upper and lower bounds on the lifespan $T_{\max}(u_0)$ in the subcritical
range, with the regularly varying factor $L$ entering the rate.
\end{enumerate}

%==================================================================
%\section*{Acknowledgements}
%\addcontentsline{toc}{section}{Acknowledgements}
%The authors thank the anonymous referees for their careful reading.
% [Funding/affiliation acknowledgements to be inserted here.]

\section*{Declarations}
\noindent\textbf{Competing interests.} The authors declare no competing
interests.\\
\textbf{Data availability.} Data sharing is not applicable: no datasets
were generated or analysed.

\section*{Acknowledgement}

This work was completed while VK  was visiting the Ghent Analysis \& PDE Center at Ghent University. He gratefully acknowledges the financial support and excellent research facilities provided by the center. 

%==================================================================

\end{document}